\DeclareMathAlphabet{\mathpzc}{OT1}{pzc}{m}{it}
\newcommand\qedsymbol{\begin{flushright}$\blacksquare$\end{flushright}}
\numberwithin{lemma}{section}
\numberwithin{theorem}{section}
\numberwithin{proposition}{section}
\numberwithin{algorithm}{section}
\numberwithin{assumption}{section}
\numberwithin{equation}{section}
\numberwithin{remark}{section}
\begin{document}
\global\long\def\xt#1{x^{#1}}%
\global\long\def\yt#1{y^{#1}}%
\global\long\def\zt#1{z^{#1}}%
\global\long\def\norm#1{\|#1\|}%
\global\long\def\normsq#1{\|#1\|^{2}}%
\global\long\def\N{\mathbb{N}}%
\global\long\def\normcb#1{\|#1\|^{3}}%
\global\long\def\inner#1#2{\langle#1,#2\rangle}%
\global\long\def\grad{\nabla}%
\global\long\def\Dely{\Delta_{y}}%
\global\long\def\oneroottwo{\frac{1}{\sqrt{2}}}%
\global\long\def\lg#1#2{l_{g}(y^{#1};y^{#2})}%
\global\long\def\lf#1#2{l_{f}(x^{#1};x^{#2})}%
\global\long\def\lh#1#2{l_{h}(z^{#1};z^{#2})}%
\global\long\def\argmin{\arg\min}%
\global\long\def\ep{\epsilon}%
\global\long\def\R{\mathbb{R}}%
\global\long\def\ceil#1{\Big\lceil#1\Big\rceil}%
\global\long\def\R{\mathbb{R}}%
% \global\long\def\ceil#1{\lceil#1\rceil}%
\global\long\def\ytil{\tilde{y}}%
\global\long\def\Conv{\text{Conv}}%
\global\long\def\half{\frac{1}{2}}%
\global\long\def\lam{\lambda}%
\global\long\def\xunder{\underline{x}}%
\global\long\def\alpt#1{\alpha_{#1}}%
\global\long\def\etat#1{\eta_{#1}}%
\global\long\def\xbar{\bar{x}}%
\global\long\def\gradsq{\grad^{2}}%

\global\long\def\Ltil{\tilde{L}}%
\global\long\def\xstar{x^{*}}%

\global\long\def\fstar{f^{*}}%
\global\long\def\wt#1{\omega_{#1}}%
\global\long\def\dist{\text{dist}}%
\global\long\def\Xstar{X^{*}}%

\global\long\def\xhat{\hat{x}}%
\global\long\def\xstar{x^{*}}%
\global\long\def\Ne{\mathcal{N}}%
\global\long\def\Lmax{L_{\max}}%
\global\long\def\mumin{\mu_{\min}}%
\global\long\def\normsq#1{\|#1\|^{2}}%
\global\long\def\funder{\underline{f}}%
\global\long\def\ftil{\tilde{f}}%
\global\long\def\levelf#1{\mathcal{E}_{f}(#1)}%
\global\long\def\Xtil{\tilde{X}}%
\global\long\def\xbrt#1{x^{(#1)}}%
\global\long\def\tiprime#1{t_{#1}^{'}}%
\global\long\def\ti#1{t_{#1}}%
\global\long\def\xti#1{x^{t_{#1}}}%
\global\long\def\xtipr#1{x^{t_{#1}^{'}}}%
\global\long\def\tipr#1{t_{#1}^{'}}%
\global\long\def\lfj{l_{f_{j}}}%
\global\long\def\dist{\text{dist}}%
\global\long\def\distsq{\text{dist}^{2}}%
\global\long\def\normvsq#1{\|#1\|_{V}^{2}}%

\global\long\def\distsq#1{\text{dist}^{2}(#1)}%
\global\long\def\frN{\mathfrak{N}}%
\global\long\def\calM{\mathcal{M}}%
\global\long\def\yunderti#1#2{\underline{y}^{#1,[#2]}}%
\global\long\def\ybarti#1#2{\bar{y}^{#1,[#2]}}%
\global\long\def\yti#1#2{y^{#1,[#2]}}%
\global\long\def\ybar{\bar{y}}%
\global\long\def\lti#1#2{l^{#1,[#2]}}%

\global\long\def\alpt{\alpha_{t}}%
\global\long\def\wt#1{\omega_{#1}}%
\global\long\def\tildef{\ftil}%
\global\long\def\ibar{\bar{i}}%
\global\long\def\calK{\mathcal{K}}%
\global\long\def\calP{\mathcal{P}}%
\global\long\def\rint{\text{rint}}%
\global\long\def\xtil{\tilde{x}}%
\global\long\def\aff{\text{Affine}}%
\global\long\def\Do{D_{0}}%
\global\long\def\DX{D_{X}}%
\global\long\def\lj#1{f_{[#1]}}%

\global\long\def\li{l_{i}}%
\global\long\def\ri{r_{i}}%
\global\long\def\xtj#1#2{x_{#1}^{#2}}%
\global\long\def\xjt#1#2{x_{#1}^{#2}}%
\global\long\def\Mj{\calM_{j}}%
\global\long\def\jbar{\bar{j}}%
\global\long\def\fj#1{f_{[#1]}}%
\global\long\def\fjt#1#2{f_{#1}^{#2}}%
\global\long\def\Xj#1{X_{[#1]}}%

\global\long\def\lftil{\tilde{l}_{f}}%
\global\long\def\yt#1{y^{#1}}%
\global\long\def\tbar{\bar{t}}%
\global\long\def\quarter{\frac{1}{4}}%
\global\long\def\third{\frac{1}{3}}%
% \global\long\def\xjt#1{x_{[j]}^{#1}}%
\global\long\def\qbar{\bar{q}}%
\global\long\def\Ltil{\tilde{L}}%
\global\long\def\Stil{\tilde{S}}%
\global\long\def\fbar{\bar{f}}%
\global\long\def\funder{\underline{f}}%
\global\long\def\ft#1{f^{#1}}%
\global\long\def\fundert#1{\underline{f}^{#1}}%
\global\long\def\fbart#1{\bar{f}^{#1}}%
\global\long\def\tauhat{\hat{\tau}}%
\global\long\def\taubar{\bar{\tau}}%
\global\long\def\ups{\upsilon}%
\global\long\def\Fups{F_{2\ups}}%
\global\long\def\fups{f_{2\ups}}%
\global\long\def\Punders#1{\underline{P}_{#1}}%
\global\long\def\calK{\mathcal{K}}%
\global\long\def\calS{\mathcal{S}}%
\global\long\def\sbar{\bar{s}}%
\global\long\def\xbart#1{\xbar^{#1}}%
\global\long\def\Otil{\tilde{O}}%
\global\long\def\calF{\mathcal{F}}%

\global\long\def\io{\iota}%
\global\long\def\Vio{\mathcal{V}_{\io}}%
\global\long\def\bigM{\mathbb{\mathbb{M}}}%

\global\long\def\Vdel{\Vio}%
\global\long\def\calV{\mathcal{V}}%
\global\long\def\lambar{\bar{\lambda}}%
\global\long\def\xplusdel{x_{\io}^{+}}%
\global\long\def\xplusio{x_{\io}^{+}}%
\global\long\def\xiop{x_{\io}^{+}}%
\global\long\def\mutil{\tilde{\mu}}%
\global\long\def\upstil{\tilde{\ups}}%
\global\long\def\mbar{\bar{m}}%
\global\long\def\sigbar{\bar{\sigma}}%
\global\long\def\kabar{\bar{\kappa}}%

\global\long\def\iotamax{\iota_{\max}}%
\global\long\def\iomax{\iota_{\max}}%
% \global\long\def\delbar{\bar{\delta}}%
\global\long\def\Frho{F_{2\rho}}%
\global\long\def\xplus{x^{+}}%
\global\long\def\tab{\quad}%
\global\long\def\Pbar{\bar{P}}%
\global\long\def\Punder{\underline{P}}%
\global\long\def\xstarp{x_{p}^{*}}%
\global\long\def\Delbar{\bar{\Delta}}%
\global\long\def\rhotil{\tilde{\rho}}%
\global\long\def\Lhat{\hat{L}}%
\global\long\def\rhomax{\rho_{\max}}%

\newcommand{\GR}{\mathcal{GR}}
\newcommand{\tGR}{$\mathcal{GR}$~}

\newcommand{\Lbar}{\bar{L}}

\makeatletter
\newcommand{\algmargin}{\the\ALG@thistlm}
\makeatother
\newlength{\whilewidth}
\settowidth{\whilewidth}{\algorithmicwhile\ }
\algdef{SE}[parWHILE]{parWhile}{EndparWhile}[1]
  {\parbox[t]{\dimexpr\linewidth-\algmargin}{%
     \hangindent\whilewidth\strut\algorithmicwhile\ #1\ \algorithmicdo\strut}}{\algorithmicend\ \algorithmicwhile}%
\algnewcommand{\parState}[1]{\State%
  \parbox[t]{\dimexpr\linewidth-\algmargin}{\strut #1\strut}}

 \renewcommand{\algorithmicrequire}{\textbf{Input:}}
\renewcommand{\algorithmicensure}{\textbf{Output:}}

\renewcommand{\emph}[1]{\textit{\textbf{#1}}}

\newcommand{\oldalglinenumber}{}% Dummy storage macro
\NewDocumentCommand{\NoLNIf}{ m }{%
  \RenewCommandCopy{\oldalglinenumber}{\alglinenumber}% Copy current version of line number printing macro
  \RenewDocumentCommand{\alglinenumber}{ m }{}% Turn \alglinenumber into a no-op
  \If{#1}% Print regular for command
  \addtocounter{ALG@line}{-1}% Step line counter back
  \RenewCommandCopy{\alglinenumber}{\oldalglinenumber}% Restore line number printing macro
}
\NewDocumentCommand{\NoLNEndIf}{}{%
  \RenewCommandCopy{\oldalglinenumber}{\alglinenumber}% Copy current version of line number printing macro
  \RenewDocumentCommand{\alglinenumber}{ m }{}% Turn \alglinenumber into a no-op
  \EndIf% Print regular end for command
  \addtocounter{ALG@line}{-1}% Step line counter back
  \RenewCommandCopy{\alglinenumber}{\oldalglinenumber}% Restore line number printing macro
}
\NewDocumentCommand{\NoLNElse}{}{%
  \RenewCommandCopy{\oldalglinenumber}{\alglinenumber}% Copy current version of line number printing macro
  \RenewDocumentCommand{\alglinenumber}{ m }{}% Turn \alglinenumber into a no-op
  \Else% Print regular end for command
  \addtocounter{ALG@line}{-1}% Step line counter back
  \RenewCommandCopy{\alglinenumber}{\oldalglinenumber}% Restore line number printing macro
}

\newcommand{\delbar}{\bar{\delta}}
\newcommand{\myl}{\mathpzc{l}}

\global\long\def\red#1{\textcolor{red}{#1}}%

\global\long\def\blue#1{\textcolor{blue}{#1}}%

\newcommand{\lihat}{\hat{l}_i}
\newcommand{\rihat}{\hat{r}_i}
\newcommand{\calW}{\mathcal{W}}
\newcommand{\tcW}{$\mathcal{W}$}
\title{Linearly Convergent Algorithms for Nonsmooth Problems with Unknown Smooth
Pieces}

\author{Zhe Zhang \and Suvrit Sra
         %etc.
}

%\authorrunning{Short form of author list} % if too long for running head

\institute{
  Z. Zhang \at
    School of Industrial Engineering, Purdue University, USA \\
    \email{zhan5111@purdue.edu}
  \and
    S. Sra \at
    Department of Mathematics, CIT, TU Munich, Germany \\
    \email{s.sra@tum.de}
}

\date{Received: date / Accepted: date}

\maketitle

\begin{abstract}
We develop efficient algorithms for optimizing piecewise smooth (PWS) functions where the underlying partition of the domain into smooth pieces is \emph{unknown}. For PWS functions satisfying a quadratic growth (QG) condition, we propose a bundle-level (BL) type method~\cite{lemarechal1995new} that achieves global linear convergence---to our knowledge, the first such result for any algorithm for this problem class. We extend this method to handle approximately PWS functions and to solve weakly-convex PWS problems, improving the state-of-the-art complexity to match the benchmark for smooth non-convex optimization. Furthermore, we introduce the first verifiable and accurate termination criterion for PWS optimization. Similar to the gradient norm in smooth optimization, this certificate tightly characterizes the optimality gap under the QG condition, and can moreover be evaluated without knowledge of any problem parameters. We develop a search subroutine for this certificate and embed it within a guess-and-check framework, resulting in an almost parameter-free algorithm for both the convex QG and weakly-convex settings.
\end{abstract}

\section{Introduction }

Non-smoothness is a major bottleneck in optimization. In its presence, theoretical convergence rates plummet from linear to sublinear for convex problems under the QG condition~\cite{LanBook,nesterov2003introductory,nemirovsky1983problem}, and degrade by orders of magnitude for non-convex problems~\cite{zhang2020complexity,davis2019stochastic}. Crucially, for general non-smooth functions, this performance gap is fundamentally unavoidable~\cite{nemirovsky1983problem,nesterov2003introductory}.

% Non-smoothness is a major bottleneck to efficient optimization. In the absence of smoothness, theoretical convergence rates drop from linear to sublinear for convex problems satisfying the QG condition (or strongly convex problems)~\cite{LanBook,nesterov2003introductory,nemirovsky1983problem}, and become orders of magnitude worse~\cite{zhang2020complexity,davis2019stochastic} for non-convex problems. Crucially, this performance gap is generally unimprovable~\cite{nemirovsky1983problem,nesterov2003introductory}.

In this paper, we study a specific,  widely applicable class of non-smooth functions: piecewise smooth (PWS) functions. A function is PWS if its domain can be partitioned into a finite number of subsets (pieces) such that restricted to each piece (see Definition~\ref{def:piecewise-smooth}) the function is smooth. Clearly, PWS functions present a more structured form of non-smoothness compared to the general case, as their gradients are continuous within the interior of each piece, and non-differentiability occurs only on the lower-dimensional boundaries between pieces (that have measure zero).

The PWS structure appears in many important applications, including statistics (e.g., elastic net regularization), signal processing (e.g., compressive sensing and phase retrieval), economics (e.g., matrix games), control (e.g., multiparametric programming), and machine learning (e.g., the ReLU activation function). A key challenge arises in many practical scenarios where the underlying pieces comprising the PWS objective function are too complicated or unknown \textit{a priori}. This lack of knowledge about the function's specific structure raises the following central research question:
\[\ovalbox{\begin{minipage}{0.8\columnwidth - 2\fboxsep - 0.8pt}%
\centering \it Can one optimize PWS functions with unknown pieces using  almost the same oracle complexity as that of smooth optimization?
\end{minipage}}
\]

\noindent We answer this question by developing new algorithms for optimizing PWS problems of the form: 
\begin{equation}
\min_{x\in X}f(x)\label{eq:opt_prob},
\end{equation}
where the objective function $f$ is PWS, and the feasible region $X$ is closed, convex, and simple~\cite{ben2001lectures} (i.e., sets onto which projection is computationally efficient). An illustrative example of such a PWS function is given by
\begin{equation}
    f(x) = \|x\|^2 + |x_1|, \quad x \in \mathbb{R}^2. \label{eq:demo-eg}
\end{equation}
As plotted in Figure~\ref{fig:piecewise-smooth-eg}, the non-smooth component $|x_1|$ divides the domain $\mathbb{R}^2$ into two regions, $X_1 := (-\infty, 0] \times \mathbb{R}$ and $X_2 := (0, \infty) \times \mathbb{R}$. Observe that $f$ is differentiable everywhere except on the boundary between these regions (the $x_2$-axis, where $x_1=0$), and the gradient $\nabla f(x)$ is continuous within the interior of each piece.

\begin{figure}
\subfloat[$f(x)=\protect\norm x^{2}+|x_{1}|.$\label{fig:piecewise-smooth-eg}]{\includegraphics[width=0.55\linewidth]{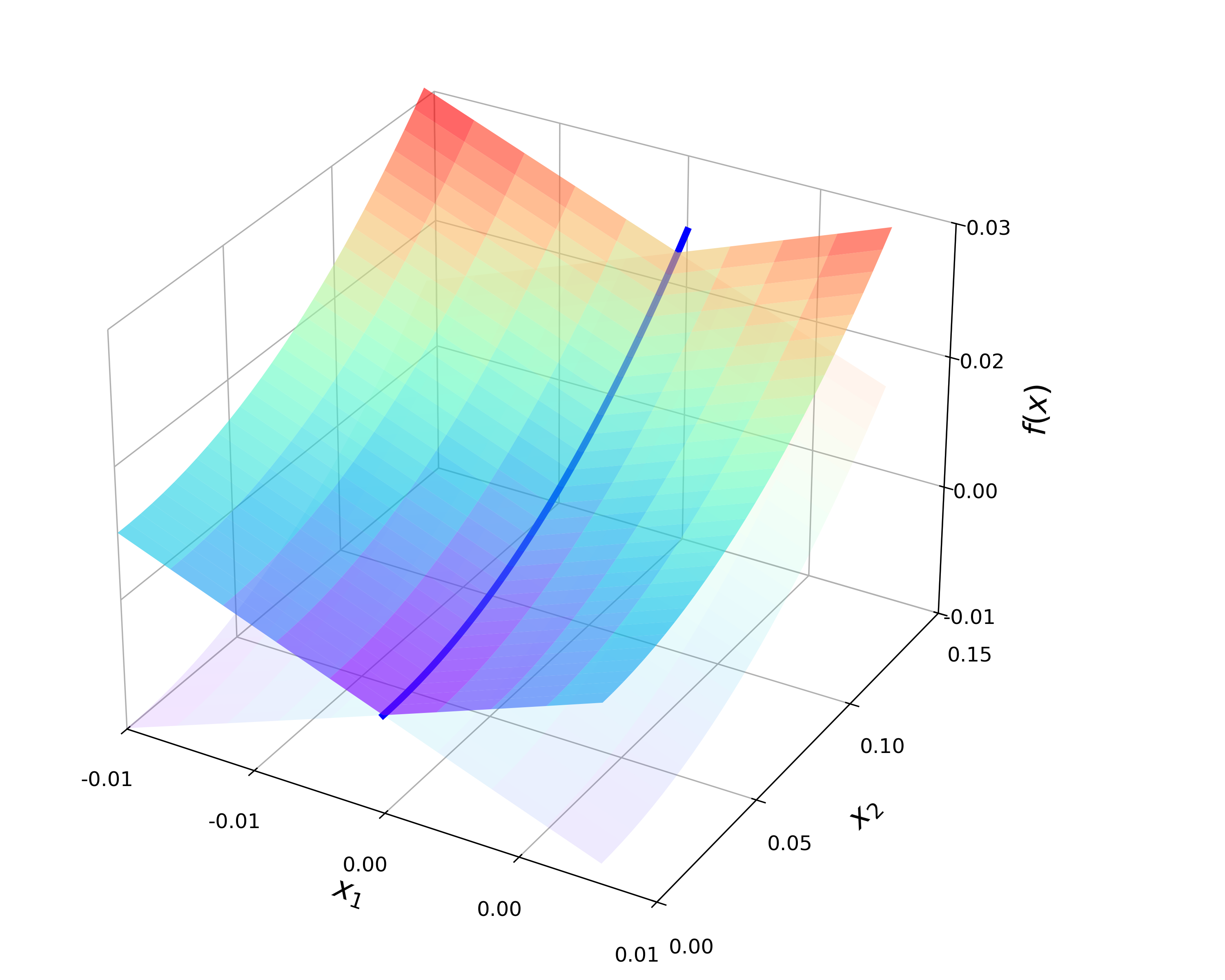}}\hspace*{\fill}\subfloat[{GD Iterates from $x^{0}=[1e^{-4};1e^{-2}].$\label{fig:GD-Iterates-On-Piecewise-Smooth}}]{\includegraphics[width=0.55\linewidth]{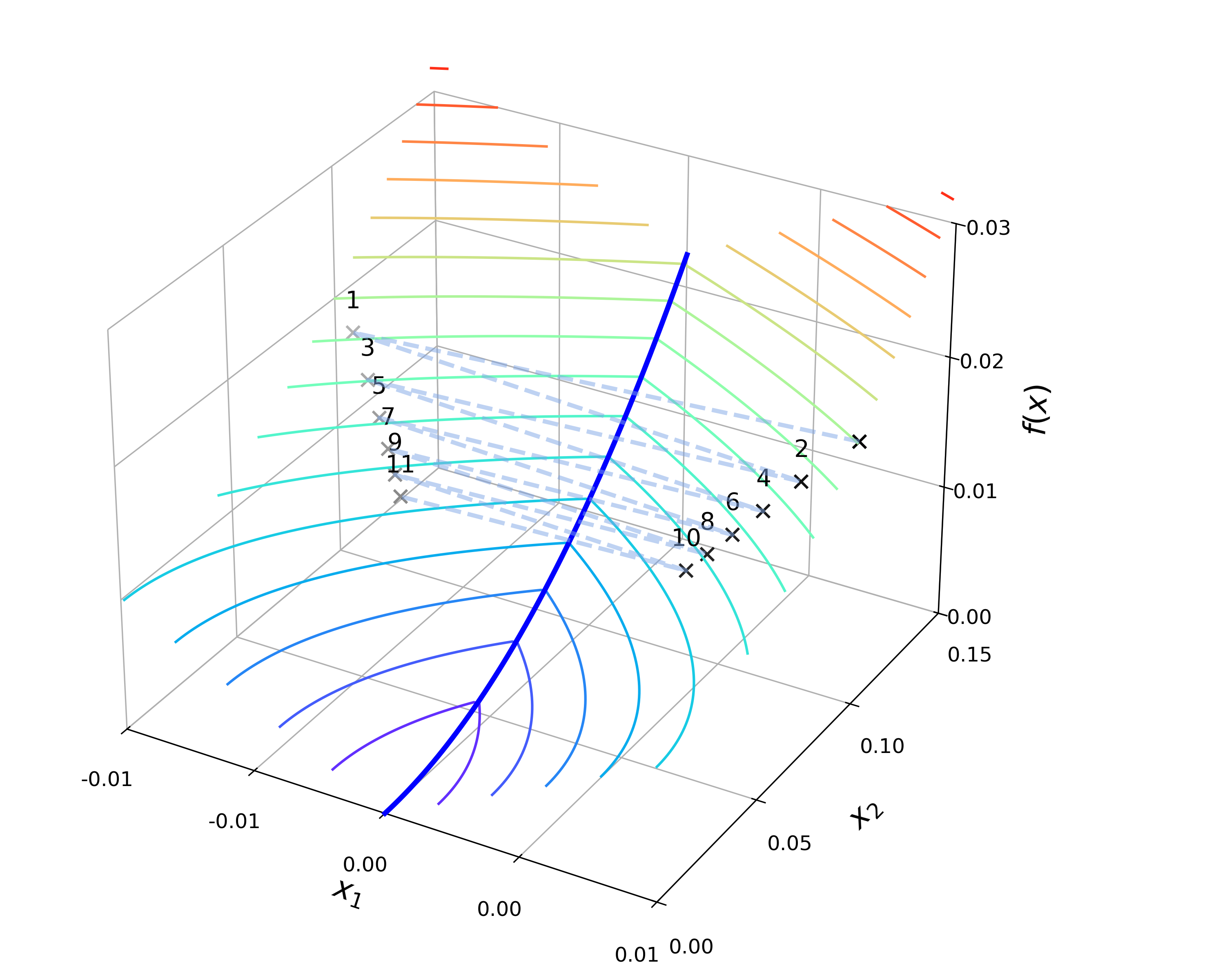}}\caption{Illustration of a Piecewise Smooth Function.}
\end{figure}

Despite the structure of PWS functions, there has been limited success in designing algorithms capable of achieving ``almost smooth performance'' for optimizing them, especially when the pieces are unknown. Specifically, for iterative first-order (FO) methods, fast convergence typically relies on computing the next iterate based on accurate FO information obtained at the current iterate. However, obtaining such information is challenging for PWS functions with unknown pieces: the FO information (e.g., the gradient) generated at a given iterate is accurate only  within the smooth piece containing that iterate. Ensuring this information remains relevant for determining the next iterate is difficult when the iterates jump between different unknown smooth pieces.

More concretely, the standard gradient descent (GD) method can perform poorly even on the simple example in \eqref{eq:demo-eg}. Since the GD update relies solely on the gradient at the previous iterate, it offers little guarantee of progress when the current iterate lands in a different smooth piece than the previous one. This potential inefficiency is demonstrated by the characteristic zig-zagging pattern often observed in GD iterates for such problems, as illustrated in Figure~\ref{fig:GD-Iterates-On-Piecewise-Smooth} for iterates generated using the optimal Polyak stepsize. In contrast, methods that utilize information from multiple past iterates, such as the bundle-level (BL) method, can potentially overcome this limitation. The BL method uses first-order (FO) information from the $m$ most recent iterates (via cuts, see \eqref{eq:proto-bl-update} later). Thus, as long as the next iterate $\xt{t+1}$ lands on the same piece as any recent iterate $\xt{t-i}$ ($i \in \{0, \dots, m-1\}$), the cut generated at $\xt{t-i}$ provides relevant information, potentially guiding $\xt{t+1}$ toward making significant progress.

\subsection{Overview of Existing approaches}
Before detailing our approach and contributions, we first review existing approaches more broadly. In the literature, two main directions exist for tackling the PWS optimization problem \eqref{eq:opt_prob}, though neither approach fully achieves efficient global convergence when the function's underlying PWS structure is unknown. First, when the PWS function admits a known max-of-smooth representation (conceptually illustrated by the shadow pieces in Figure~\ref{fig:piecewise-smooth-eg}), the prox-linear update~\cite{nemirovski1995information,nesterov2003introductory,Lan15Bundle} can be employed. This method evaluates gradients on all constituent smooth functions at the current iterate, and uses all this information to generate the next iterate. It can achieve ``almost smooth performance'' because at least one of these gradients provides accurate information for the active (maximal) function near the next iterate. However, the prerequisite of requiring access to all component functions is far too strong in applications where pieces are unknown.

The second approach exploits the so-called $\mathcal{VU}$ structure~\cite{mifflin2005algorithm,lemarechal2000,han2023survey,davis2024local} that characterizes the local geometry around an optimal solution. This structure decomposes into a smooth component (the $\mathcal{U}$-component) associated with an affine subspace, and a non-smooth component orthogonal to it (the $\mathcal{V}$-component). By analyzing projections of the iterates onto the $\mathcal{U}$-space, linear convergence can be achieved. However, the definition and identification of the $\mathcal{U}$-space rely on local information around the optimum. This  limits these convergence guarantees to a local neighborhood, thus providing little insight into the global oracle complexity needed to answer our research question.

The approach taken in our paper proceeds by re-examining the classical bundle-level (BL) type methods, and affirmatively answers the posed research question. As motivated above and unlike the methods reviewed, BL methods do not require explicit knowledge of the pieces, and can be analyzed globally. Rather than measuring one-step progress, the key to our approach is a novel technique that analyzes multi-step progress. Under this new perspective, we prove that the BL method makes significant progress once two iterates (that need not be consecutive) land within the same piece. When the total number of pieces is finite, say $k$, (by the pigeonhole principle) this event is guaranteed to happen every $k+1$ steps. Thus, the iterates make significant progress periodically, resulting in the desired linear convergence under the strongly convex setting.

\subsection{Main Contributions}

More concretely, for the PWS objective function in \eqref{eq:opt_prob}, the BL method generates the new iterate $\xt{t+1}$ by projecting the current iterate $\xt{t}$ onto a level set constructed from the preceding $m$ cuts (linearizations based on past gradients):
\begin{align}\label{eq:proto-bl-update}
\begin{split}
\xt{t+1} \leftarrow \argmin_{x\in X}\ & \frac{1}{2} \norm{x-\xt{t}}^{2} \\ % Using 1/2 by convention; kept \norm assuming defined
\text{s.t.} \quad & f(\xt{t-i})+\inner{\grad f(\xt{t-i})}{x-\xt{t-i}} \leq l \quad \forall i \in \{0, 1, \dots, m-1\}. % Clarified set notation
\end{split}
\end{align}
Crucially, this update \eqref{eq:proto-bl-update} depends only on the algorithm parameters $m$ (number of cuts) and $l$ (the level value). It does not require explicit knowledge of the PWS function's underlying pieces, making it directly applicable to our unknown PWS setting. % Clarified non-dependence on k/pieces

%To understand its success, a comparison with the GD method is illustrative. The GD method utilizes only the gradient at the previous iterate $\xt{t}$ to compute the next iterate, so $\xt{t+1}$ makes significant progress only when $\xt{t+1}$ lands on the same piece as $\xt{t}$. In contrast, the BL method utilizes first-order (FO) information from the $m$ most recent iterates (via the cuts in \eqref{eq:proto-bl-update}) to compute the next iterate. Thus, as long as $\xt{t+1}$ lands on the same piece as $\xt{t-i}$ for any $i \in \{0, 1, \dots, m-1\}$, the cut generated at $\xt{t-i}$ provides relevant information potentially guiding $\xt{t+1}$ toward making significant progress. Towards this end, we develop a novel analysis technique to quantify how a cut generated from several iterations ago contributes to the quality of the current iterate $\xt{t+1}$. 

Based on this method and our novel multi-step analysis, this paper contributes to the complexity theory of nonsmooth PWS optimization along the following directions:
% \begin{enumerate}

\paragraph{\textsf{i).} A New Perspective and Global Linear Convergence.}
When the objective function $f$ is convex, satisfies the quadratic growth condition, and the optimal objective value $f^*$ is known, we provide a simple proof showing that the single-loop BL method achieves a global linear convergence rate if the number of cuts, $m$, is greater than the total number of pieces $k$. Importantly, our analysis reveals that the complexity bound depends not on the global number of pieces $k$, but rather on how often nearby iterates land on the same piece. This occurs more frequently as the algorithm approaches a solution, suggesting faster convergence in practice.

\paragraph{\textsf{ii).} Generalization and Local Linear Convergence.}
To handle broader settings and relax assumptions on the oracle, we introduce a class of approximately PWS (apx-PWS) functions. In this setting, we show that the BL iterates achieve a rate of $O(1/t)$. More importantly, we prove that once the iterates enter a neighborhood of the solution, the algorithm accelerates to a \textit{local linear convergence} rate if $m$ exceeds the local number of pieces, $k_{\text{local}}$. This provides the first theoretical justification for the well-known superior empirical performance of bundle-level methods over gradient-descent type methods for this problem class.

\paragraph{\textsf{iii).} Adaptive Methods and Application to Non-Convex Problems.}
We address a critical drawback of the baseline BL method by removing the impractical requirement of knowing the optimal value $f^*$. We propose an adaptive method, $\mu$-BL, that instead only requires the QG modulus $\mu$---a much weaker assumption. This method achieves the same oracle complexity as the original BL method. We then apply this adaptive approach to the non-smooth, non-convex setting. By using $\mu$-BL as a subroutine within an inexact proximal point framework, we improve the oracle complexity for finding an $\epsilon$-Moreau stationary point from $O(1/\epsilon^4)$ down to $O(1/\epsilon^2)$, matching the optimal order for smooth non-convex optimization.

\paragraph{\textsf{iv).} A Verifiable Stationarity Certificate and Parameter-Free Algorithms.}
We introduce a novel \tcW-stationarity certificate that resolves the long-standing challenge of finding a verifiable and accurate termination criterion for PWS optimization. Similar to  gradient norms in the smooth setting, our certificate provides an error bound, tightly characterizes the optimality gap under the QG condition, and can be computed without prior knowledge of problem parameters. We believe this is the first certificate for this problem class with these properties. Crucially, this certificate and its search subroutine provide the mechanism to exploit unknown growth conditions. By embedding our subroutine within a guess-and-check framework, we successfully design almost parameter-free BL algorithms for both convex QG and weakly-convex settings, a key practical advance.

% \end{enumerate}

% More concretely, provided with the optimal function value $\fstar,$
% the bundle level update seeks to project the iterate $x^{t}$ onto
% a closest point satisfying a certain level set constraint 

% \subsection{Main Contributions}

\subsection{Related Literature}

\paragraph{Bundle Type Algorithms.}
    
    The cutting plane method, introduced in the 1960s~\cite{kelley1960cutting}, constructs a piecewise linear model of the objective function using the maximum of linear approximations (cuts) derived from previous iterates, selecting the model's minimizer as the next iterate. However, its practical performance can be unstable, as iterates may change drastically with each new cut incorporated into the model.
    
    The bundle level (BL) method was introduced by Lemaréchal, Nesterov, and Nemirovski~\cite{lemarechal1995new} primarily to address the stability issues inherent in the cutting plane method. To mitigate instability, the BL method ensures more controlled progress by selecting the next iterate as the projection of the current iterate onto a specific level set derived from the cutting plane model. Reference~\cite{lemarechal1995new} established that this method achieves an $O(1/\epsilon^2)$ oracle complexity for optimizing general Lipschitz-continuous, non-smooth convex objective functions. Later, in his textbook~\cite{nemirovski1995information}, Nemirovski observed empirically that the BL method significantly outperformed subgradient descent on the MAXQUAD problem. Since MAXQUAD involves a piecewise smooth (PWS) objective, our development provides theoretical justification for Nemirovski’s observation.
    
    The BL method has since been extended, for example, to incorporate non-Euclidean Bregman distances~\cite{BenNem2005NERM} and to achieve accelerated convergence rates~\cite{Lan15Bundle}. More recently,~\cite{deng2024uniformly} proposed a BL-type method for function-constrained optimization, demonstrating promising numerical results.
    
    A closely related line of work is the proximal bundle method, introduced in the 1970s~\cite{lemarechal2009extension,mifflin1977algorithm,wolfe2009method}. These methods enhance stability by adding a proximal regularization term (typically penalizing deviation from the current iterate) to the cutting plane model minimization subproblem. The proximal bundle method has been extended, for instance, to handle inexact oracles~\cite{de2014convex} and non-convex objectives~\cite{hare2010redistributed,de2019proximal}.
    
    Complexity guarantees for the proximal bundle method have been analyzed in works such as~\cite{kiwiel2000efficiency,du2017rate,diaz2023optimal,liang2021proximal}. However, these complexity bounds, at best, match those of subgradient-type methods (e.g., $O(1/\epsilon^2)$ for the general non-smooth convex case) and typically fail to demonstrate theoretical benefits from leveraging multiple cuts simultaneously—a key factor often cited for the method’s strong empirical performance. An intriguing research direction involves exploiting the so-called $\mathcal{UV}$ structure around the optimal solution to achieve superlinear local convergence~\cite{mifflin2005algorithm,mifflin2012science}. However, as this structure is inherently local, the corresponding analysis is restricted to local convergence regimes.

\paragraph{Structured Non-smooth Optimization.}

To overcome the challenges of non-smoothness, significant effort has focused on exploiting specific problem structures when available. For instance, in sparse optimization,~\cite{beck2009fast,nesterov2013gradient} developed composite optimization methods adept at handling objectives combining a smooth term with a prox-friendly non-smooth regularizer. In areas like function-constrained optimization and risk minimization, methods based on the prox-linear operation~\cite{nesterov2003introductory,Lan15Bundle,zhang2022solving,zhang2023distributed} (related to bundle methods) leverage structures involving the maximum of known smooth functions.
Furthermore, if the Fenchel conjugate of the non-smooth function is accessible, Nesterov~\cite{Nestrov2004Smooth} proposed influential techniques for constructing a smooth approximation of the original function. This smoothing approach is closely related to concepts like the Moreau envelope~\cite{Beck2017First} and randomized smoothing~\cite{duchi2012randomized}. However, these approximation strategies typically cannot match the oracle complexity of direct smooth optimization, often due to the difficulty of the sub-problems involved or large Lipschitz constants associated with the smoothed surrogate functions.

More recently, interesting developments have focused on algorithms achieving rapid local convergence (e.g., linear or superlinear) by exploiting unknown local geometric structures near an optimal solution. These methods often rely on the existence of a manifold within a neighborhood of the solution on which the objective function behaves smoothly.
Of particular importance is the $\mathcal{VU}$ decomposition framework~\cite{mifflin2005algorithm,mifflin2000functions}, which covers the PWS functions considered in this paper as a special case. 

Leveraging this structure, ~\cite{mifflin2005algorithm} combines the proximal bundle method with Newton-like updates on the smooth $\mathcal{U}$-subspace component to achieve local superlinear convergence. Recently,~\cite{han2023survey} proposed a simpler survey descent method specifically designed for PWS problems. In this method, each iteration generates a new batch of survey points by solving $k$ quadratically constrained quadratic programs (QCQPs), ensuring that all local smooth pieces are represented in the ``survey''.  At a high level, these survey points track the local geometry of $f$, enabling the generation of informed descent directions to achieve local linear convergence. However, initializing such a method to guarantee all necessary pieces are surveyed from the beginning presents a key challenge.

Davis and Jiang~\cite{davis2024local} propose the normal tangent descent method. This approach aims to find the minimal-norm element of the Goldstein subgradient to serve as a reliable descent direction for achieving local linear convergence. The method handles a more general class of non-smooth problems satisfying a local $\mathcal{VU}$ manifold decomposition condition. % User added "more" back, incorporated
They design an efficient subroutine that exploits this local manifold structure to find the minimum-norm subgradient for descent.

\paragraph{Leveraging Growth Conditions.}
A growth condition of order $p$ relates the optimality gap to the distance to the solution set $X^*$ via the inequality $\mu \text{dist}^p(x,X^*) \le f(x)-f^*$. This concept is closely related to error bounds \cite{pang1997error} and the Polyak-Lojasiewicz (PL) and Kurdyka-Lojasiewicz (KL) conditions \cite{polyak1963gradient,lojasiewicz1963propriete,kurdyka1998gradients}. Two special cases are particularly important: the sharp growth condition ($p=1$) and the quadratic growth (QG) condition ($p=2$).

In his seminal work, Polyak showed that for smooth functions, the QG condition is sufficient for gradient descent to achieve a linear convergence rate, even without strong convexity \cite{polyak1963gradient}. For non-smooth functions, he later established that the sharp growth condition allows gradient descent with the Polyak stepsize to converge linearly \cite{polyak1969minimization}. More recent results have focused on exploiting these conditions in more adaptive ways or in more complex settings \cite{renegar2022simple,diaz2023optimal,drusvyatskiy2018error}. Our work focuses on the PWS non-smooth setting under the QG condition. We show that by re-examining the bundle-level method from a new perspective, we can improve the sublinear convergence rates implied by prior works to a linear one. Furthermore, our technique can be adapted to the sharp growth setting to achieve an even faster quadratic convergence rate.

    % \textbf{Multi-step Progress Analysis}
    % trust region, sub-space method, prox-bundle method, sliding method, but that should not be a major point?

% \subsection{Organization}
% This

\subsection{Notation and Assumptions}

\paragraph{Notation}
\begin{itemize}
    \item $[m] := \{0,1,2,\dots,m\}$ denotes the set of integers from 0 to $m$.
    \item $B(x;\delta) := \{y\in \mathbb{R}^n \mid \|y-x\|\leq \delta\}$ denotes the closed ball in $\mathbb{R}^n$ centered at $x$ with radius $\delta \ge 0$.
    \item $f'(x)$ denotes an arbitrary subgradient from the subdifferential $\partial f(x)$. If $f$ is differentiable at $x$, $\nabla f(x)$ denotes its unique gradient.
    \item $l_f(x;y)$ denotes the linear model of $f$ constructed at point $y$ and evaluated at point $x$:
    \[
    l_f(x;y) := f(y)+\langle f'(y), x-y \rangle.
    \]
\end{itemize}

\paragraph{Assumptions}
Throughout the paper, we make the following standard assumptions about the objective function $f$.
\begin{itemize}
    \item We assume $f$ is $M$-Lipschitz continuous for some $M \ge 0$, i.e., $|f(x)-f(y)| \le M\|x-y\|$ for all $x,y \in X$.
    \item We assume $f$ is bounded below, i.e., $f^* := \min_{x\in X} f(x) > -\infty$, and use $\Delta_f$ to denote the initial function value gap, i.e., $\Delta_f:=f(\xt{0})-\fstar$.
    \item We assume the set of minimizers, $\mathcal{X}^* := \argmin_{x\in \mathcal{X}}f(x)$, is non-empty. We use $x^*$ to denote an arbitrary optimal solution from this set.
\end{itemize}

\section{The Warm-Up: When $\fstar$ Is Known\protect\label{sec:fstar}}

We illustrate the key idea for achieving global linear convergence under the simplest setting where the optimal function value $\fstar$ is available. Specifically, Subsection~\ref{subsec:fstarknown-set-up} provides the detailed setting for our discussion, Subsection~\ref{subsec:fstar-known-algorithm} presents the bundle-level (BL) algorithm and discusses the key insight, and Subsection~\ref{subsec:fstar-known-proofs} furnishes the technical proofs.

\subsection{The problem set-up\protect\label{subsec:fstarknown-set-up}}

In this section, we assume the objective function $f$ in~\eqref{eq:opt_prob} is convex, $(k,L)$-piecewise smooth, and satisfies the quadratic growth property with some modulus $\mu>0$. The specific definitions for piecewise smoothness and the growth property are provided below.

% \begin{defn}
% \label{def:piecewise-smooth}We say a function $f:X\rightarrow\R$
% to be $(k,L)$-piecewise smooth if there exists some covering of the
% domain, $X\subset\cup_{i\in[k]}X_{i}$, such that $f_{i}=f_{|X_{i}}$,
% the function $f$ being restricted to the sub-domain $X_{i}$, is
% $L$-smooth for some $L>0$. Specifically, we have access to some
% first order oracle $f'(x)$ (could be either gradient or sub-gradient
% depending on $x$) such that the following to hold for all $i\in[k]$

% \begin{equation}
% f(x)-f(\xbar)-\inner{f'(\xbar)}{x-\xbar}\leq\frac{L}{2}\norm{x-\xbar}^{2}\forall x,\xbar\in X_{i}.\label{eq:smoothness-condition}
% \end{equation}
% \end{defn}

\begin{definition}[Piecewise Smoothness]\label{def:piecewise-smooth}
We say a function $f:X\rightarrow\mathbb{R}$ is $(k,L)$-piecewise smooth if there exists a covering of its domain $X$ by $k$ sets (pieces) $\{X_i\}_{i=1}^k$ (i.e., $X \subseteq \cup_{i=1}^k X_i$) such that for each piece $X_i$, the restriction $f_i := f|_{X_i}$ is $L$-smooth for some $L>0$. Specifically, we assume access to a first-order oracle $f'(x)$ (which could be the gradient $\nabla f(x)$ where $f$ is differentiable, or a specific subgradient otherwise) such that the following inequality holds for all $i \in \{1, \dots, k\}$:
\begin{equation}
f(x)-f(\bar{x})-\inner{f'(\bar{x})}{x-\bar{x}}\leq\frac{L}{2}\norm{x-\bar{x}}^{2}, \quad \forall x,\bar{x}\in X_{i}. \label{eq:smoothness-condition}
\end{equation}
\end{definition}

A few remarks are in order regarding Definition~\ref{def:piecewise-smooth}. This definition includes the $k$-max-of-smooth functions, $f(x):=\max_{i=1,\dots,k} \tilde{f}_{i}(x)$, as a special case, where the covering sets $X_{i}=\{x\in X \mid \tilde{f}_{i}(x)=f(x)\}$ can simply be chosen based on which component function is maximal. Furthermore, the first-order oracle $f'(x)$ required by~\eqref{eq:smoothness-condition} is slightly stronger than a standard subgradient oracle from convex analysis. While $f'(x)$ coincides with the standard gradient $\nabla f(x)$ at points where $f$ is differentiable within a piece $X_i$, condition~\eqref{eq:smoothness-condition} imposes constraints on the choice of subgradient $f'(\bar{x}) \in \partial f(\bar{x})$ at boundary points $\bar{x}$. Specifically, the chosen subgradient must satisfy the inequality for all $x$ in a given piece $X_i$ containing $\bar{x}$. 
For instance, consider $f(x)=|x|$ on $X=\mathbb{R}$ with the partition $X_{1}=(-\infty,0]$ and $X_{2}=(0,\infty)$. Condition~\eqref{eq:smoothness-condition}, when applied with $\bar{x}=0$ and considered relative to the piece $X_1$ (which contains the non-differentiable point $0$), requires choosing the specific subgradient $f'(0)=-1$ from $\partial f(0)=[-1,1]$. % Using X2=(0,inf) as requested; example now only illustrates requirement for X1.
However, access to such specific boundary subgradients is not critical for the practical algorithm design presented later, as means exist to relax this requirement (e.g., by ensuring iterates land on differentiable points almost surely). For analytical simplicity throughout this section, we nevertheless assume access to an oracle $f'(x)$ satisfying~\eqref{eq:smoothness-condition} for some valid covering $\{X_i\}$.

The next definition, for quadratic growth, is standard in the optimization literature. For a convex function $f$, this property is slightly weaker than the strong convexity condition (see~\cite{karimi2016linear}).

\begin{definition}[Quadratic Growth]\label{def:quadartic_growth} % Corrected label spelling
Consider a function $f:X\rightarrow\mathbb{R}$ with a non-empty minimizer set $X^{*} \subseteq X$. We say $f$ satisfies the quadratic growth condition with modulus $\mu>0$ if the following inequality holds for any optimal solution $x^* \in X^*$:
\[
f(x)-f(x^{*}) \geq \frac{\mu}{2} \operatorname{dist}^{2}(x,X^{*}), \quad \forall x\in X.
\]
(Here, $\operatorname{dist}(x,X^{*}) := \inf_{y \in X^*} \|x-y\|$ denotes the distance from point $x$ to the set $X^*$.)
\end{definition}

\subsection{The Algorithm and the Key Idea \protect\label{subsec:fstar-known-algorithm}}

\begin{algorithm}[ht] 
\caption{The Bundle Level Method with Known $\fstar$, BL($m,\fstar,x^{0}$)} 
\label{alg:blm-fstarknown} 
\begin{algorithmic}[1]
\Require Optimal value $\fstar$; initial point $x^{0}\in X$; number of cuts parameter $m \ge 1$. 
\For{$t=0,1,2,\dots$}
    \State Define the level set using the $m$ most recent cuts: 
    \begin{align*}
    X(t) := \{x\in X \mid \inner{f'(x^{t-i})}{x-x^{t-i}}+f(x^{t-i})\leq \fstar, \quad \forall i \in \{0, 1, \dots, m-1\} \}. 
    \end{align*}
    \State Compute the next iterate by projection: 
    $x^{t+1} \leftarrow \argmin_{x\in X(t)} \frac{1}{2}\norm{x-x^t}^{2}.$ \label{alg:line_4-m} 
\EndFor
\end{algorithmic}
\end{algorithm}

The bundle level method for optimizing the convex piecewise smooth objective function $f$, assuming $\fstar$ is known, is provided in Algorithm~\ref{alg:blm-fstarknown}. It is closely related to the method proposed in~\cite{lemarechal1995new}. In each iteration $t$, the next iterate $x^{t+1}$ is generated by projecting the current point $x^{t}$ onto the level set $X(t)$. This set $X(t)$ is constructed from $m$ cuts generated using first-order information $f'(x^{t-i})$ from the current and previous $m-1$ iterates (specifically, for $i \in \{0, 1, \dots, m-1\}$). 

Towards establishing global linear convergence, we show that the iterates make significant progress towards the optimal set $X^{*}$ every time a matching pair is encountered, defined as follows:

\begin{definition}[$l$-Matching Pair]\label{def:mathcing-pair} 
For a trajectory $\{x^t\}_{t \ge 0}$ and an integer $l \ge 1$, we call the pair $(x^{t}, x^{t+j})$ an $l$-matching pair with respect to the pieces $\{X_i\}_{i=1}^k$ 
if $j \in \{1, \dots, l\}$ and there exists some piece index $\bar{\imath} \in \{1, \dots, k\}$ 
such that both $x^{t}$ and $x^{t+j}$ belong to the same piece $X_{\bar{\imath}}$. 
\end{definition}

Since there are only $k$ pieces, the pigeonhole principle guarantees that a $k$-matching pair occurs within any $k+1$ consecutive iterations (i.e., for any $t$, there exists $j \in \{1, \dots, k\}$ such that $(x^t, x^{t+j})$ is a $k$-matching pair). 
Let us focus on such a $k$-matching pair $(x^t, x^{t+j})$ with both points belonging to the same piece $X_{\bar{\imath}}$. Intuitively, the iterates $x^{t}, \dots, x^{t+j-1}$ explore the function landscape, potentially landing on different pieces $X_i$. However, because $x^{t+j}$ and the earlier iterate $x^t$ lie in the same piece $X_{\bar{\imath}}$ (over which $f$ behaves smoothly according to Definition~\ref{def:piecewise-smooth}), the older cut generated at $x^{t}$ remains highly relevant when computing $x^{t+j}$. This cut is included in the bundle defining the level set $X(t+j-1)$ (since $j \le k$ via pigeonhole and the convergence analysis requires $m > k$), % Added comment linking m > k
providing sufficient information about $f$'s behavior on $X_{\bar{\imath}}$ to ensure the iterate $x^{t+j}$ makes significant progress towards $X^{*}$.

To be more precise, let us consider the exploration iterates between the matching pair. From the convexity of $f$ and the definition of $X(t)$, we have\footnote{Since $x^* \in X^*$, convexity implies $f(x^{t-i}) + \inner{f'(x^{t-i})}{x^* - x^{t-i}} \leq f(x^*) \leq \fstar$ for any $x^* \in X^*$ and any $i$. Thus, any $x^* \in X^*$ satisfies all conditions defining $X(t)$, implying $X^* \subseteq X(t)$.} that the optimal set $X^*$ is contained within the level set $X(t)$ for all $t$. Therefore, the projection operation defining $x^{t+1}$ in Line 3 of Algorithm~\ref{alg:blm-fstarknown} implies the standard separating hyperplane inequality associated with  projections onto convex sets:
\begin{equation}
\inner{x^{t+1}-x^{t}}{x^{t+1}-x^{*}} \leq 0, \quad \forall x^{*}\in X^{*}. 
% Note: This is the standard projection property re-arranged. Original was inner{x^t - x^{t+1}}{x* - x^{t+1}} <= 0. Both are equivalent. Using this form as it appears in the Lemma assumption.
\end{equation}
Rearranging this (or using the equivalent form $\inner{x^t - x^{t+1}}{x^* - x^{t+1}} \leq 0$) yields the crucial three-point inequality~\cite{LanBook}:
\begin{equation}
\|x^{t+1} - x^{*}\|^{2} + \|x^t - x^{t+1}\|^{2} \leq \|x^t - x^{*}\|^{2}, \quad \forall x^{*}\in X^{*}. \label{fstareq:monotonicity}
\end{equation}
This inequality~\eqref{fstareq:monotonicity} is key, implying that the distance from the iterates to any optimal point $x^* \in X^{*}$ is monotonically non-increasing, i.e., $\|x^{t+1} - x^*\| \leq \|x^t - x^*\|$, ensuring the exploration iterates remain bounded relative to the optimal set $X^*$. Importantly, these inequalities can be combined over consecutive steps to obtain the following "bridged" version.

\begin{lemma}\label{lm:bridged-three-point-euclid}
Assume the iterates $\{x^{t}\}_{t\ge 0}$ and a point $x^* \in X^*$ satisfy $\inner{x^{t+1}-x^{t}}{x^{t+1}-x^{*}} \leq 0$ for all $t \ge 0$. Then the following \textbf{bridged three-point inequality} holds: % Removed \uline
\[
\|x^{t+j}-x^{*}\|^{2}+\frac{1}{j}\|x^{t+j}-x^t\|^{2} \leq \|x^t - x^{*}\|^{2}, \quad \forall t\geq 0, j\geq1. 
% Using user's stated inequality form. Assumed \normsq{z} was \|z\|^2 based on context.
\]
\end{lemma}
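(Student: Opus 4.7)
The plan is to derive the inequality by summing the one-step contraction and then converting the sum of consecutive squared displacements into a bound on the total displacement via a convexity/Cauchy--Schwarz argument.

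First I would unpack the hypothesis $\inner{x^{s+1}-x^{s}}{x^{s+1}-x^{*}} \leq 0$ into its Pythagorean form. Using the identity $2\inner{a}{b} = \|a\|^2 + \|b\|^2 - \|a-b\|^2$ with $a = x^{s+1}-x^s$ and $b = x^{s+1}-x^*$ (so $a-b = x^* - x^s$), this hypothesis is equivalent to the standard one-step three-point inequality
\[
\|x^{s+1}-x^{*}\|^{2} + \|x^{s+1}-x^{s}\|^{2} \leq \|x^{s}-x^{*}\|^{2}, \qquad \forall s \ge 0.
\]
This is exactly~\eqref{fstareq:monotonicity} in the excerpt.

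Next I would telescope this inequality from $s=t$ up to $s=t+j-1$. All the cross terms $\|x^{s}-x^*\|^2$ cancel, leaving
\[
\|x^{t+j}-x^{*}\|^{2} + \sum_{s=t}^{t+j-1}\|x^{s+1}-x^{s}\|^{2} \leq \|x^{t}-x^{*}\|^{2}.
\]
It therefore suffices to show $\frac{1}{j}\|x^{t+j}-x^{t}\|^{2} \leq \sum_{s=t}^{t+j-1}\|x^{s+1}-x^{s}\|^{2}$.

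For this final step, I would write $x^{t+j}-x^{t} = \sum_{s=t}^{t+j-1}(x^{s+1}-x^{s})$ as a telescoping sum and apply the Cauchy--Schwarz inequality (or equivalently the convexity of $\|\cdot\|^2$) in the form $\|\sum_{s=1}^{j} v_s\|^2 \le j \sum_{s=1}^j \|v_s\|^2$. This yields $\|x^{t+j}-x^{t}\|^{2} \le j \sum_{s=t}^{t+j-1}\|x^{s+1}-x^{s}\|^{2}$, and plugging this back gives the claimed bound. None of the steps looks delicate: the entire proof is essentially telescoping plus Cauchy--Schwarz, with the only subtlety being the recognition that the projection-type hypothesis is equivalent to the Pythagorean three-point form that fuels the telescoping.
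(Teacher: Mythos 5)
Your proof is correct and follows essentially the same route as the paper's: telescope the one-step three-point inequality $\|x^{s+1}-x^*\|^2 + \|x^{s+1}-x^s\|^2 \le \|x^s-x^*\|^2$ from $s=t$ to $s=t+j-1$, then apply the Cauchy--Schwarz (or power-mean) inequality $\|\sum_{s} v_s\|^2 \le j\sum_s\|v_s\|^2$ to the telescoping sum of displacements. The paper's argument is the same, just stated a bit more tersely.
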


\begin{proof}
Adding up $\normsq{\xt i-\xt{i+1}}+\normsq{\xt{i+1}-\xstar}\leq\normsq{\xt i-\xstar}$
from $i=t$ to $t+j-1$, we get 
\[
\normsq{x^{t+j}-\xstar}+\sum_{i=1}^{j}\normsq{x^{t+i}-\xt{t+i-1}}\leq\normsq{\xt t-\xstar}.
\]
The result then follows from the algebraic identity 
$$(j)\sum_{i=1}^{j}\normsq{x^{t+i}-\xt{t+i-1}}\geq\normsq{(x^{t+j}-x^{t+j-1})+(x^{t+j-1}-x^{t+j-2})+...(x^{t+1}-x^{t})}.$$ \qedsymbol
\end{proof}

Now, to illustrate the significant improvement derived from the iterate $x^{t+j}$ within a $k$-matching pair $(x^t, x^{t+j})$, let us assume for simplicity that the optimal set $X^*$ is a singleton, $X^{*}=\{x^{*}\}$. As discussed, $(x^t, x^{t+j})$ belong to the same piece $X_{\bar{\imath}}$, and the cut generated at $x^t$ is included in the level set $X(t+j-1)$ used to compute $x^{t+j}$ via projection (Line 3 of Algorithm~\ref{alg:blm-fstarknown}). The feasibility of $x^{t+j}$ with respect to this specific cut implies:
\begin{align*}
% Line 1: Feasibility w.r.t. cut from x^t
\inner{f'(x^{t})}{x^{t+j}-x^t}+f(x^{t}) &\leq \fstar \\ 
% Line 2: Implication using smoothness on X_ibar
\stackrel{(a)}{\implies} f(x^{t+j})-\fstar &\leq \frac{L}{2}\|x^{t+j}-x^t\|^{2} \\ 
% Line 3: Implication using QG at x^{t+j}
\stackrel{(b)}{\implies} \frac{\mu}{2}\|x^{t+j}-x^{*}\|^{2} &\leq \frac{L}{2}\|x^{t+j}-x^{t}\|^{2}, 
\end{align*}
where implication (a) follows from the $L$-smoothness of $f$ on the piece $X_{\bar{\imath}}$ (Eq.~\eqref{eq:smoothness-condition}) and (b) follows from the quadratic growth condition. This yields the inequality $\mu\|x^{t+j}-x^*\|^2 \le L\|x^{t+j}-x^t\|^2$.

Combining this inequality with the bridged three-point inequality $kL \|x^{t+j}-x^{*}\|^{2}+\frac{k}{j}\frac{L}{L}\|x^{t+j}-x^*\|^2 \leq kL\|x^t - x^{*}\|^{2} $ from Lemma~\ref{lm:bridged-three-point-euclid}, and using the fact that $j \le k$ for the $k$-matching pair, we directly obtain the progress guarantee: 
% Following user's structure: State result from derivation, state bridged inequality (implicitly used), state final result.
\begin{equation}\label{fstareq:simple-argument}
\|x^{t+j}-x^{*}\|^{2} \leq \frac{kL}{kL+\mu} \|x^t - x^{*}\|^{2}.     
\end{equation}

Since the distance to the optimal solution $x^{*}$ is monotonically non-increasing (from~\eqref{fstareq:monotonicity}), and the squared distance contracts by at least the factor $\frac{kL}{kL+\mu} < 1$ every time a $k$-matching pair occurs (which happens within every $k+1$ iterations), this establishes the desired global linear convergence guarantee if the number of cuts $m$ is bigger than $k$.

\subsection{The Convergence Guarantee}

To quantify the contraction resulting from multiple matching pairs in a more precise fashion, the following definitions associated with a sequence of matching pairs are useful.

\begin{definition}[$l$-Matching Pair Sequence and Statistics]\label{def:matching-sequence}
We call a sequence of index pairs $\{(l_{i},r_{i})\}_{i \ge 1}$ an \emph{$l$-matching pair sequence} with respect to the pieces $\{X_i\}_{i=1}^k$ if every pair $(x^{l_{i}},x^{r_{i}})$ in the corresponding trajectory $\{x^t\}$ is an $l$-matching pair (see Definition~\ref{def:mathcing-pair}) and the pairs correspond to non-overlapping index intervals, i.e., $r_{i} \leq l_{i+1} < r_{i+1}$ for all $i\geq1$. 
Given such a sequence and an iteration limit $N\geq l$, let $P(N):=\max \{i \in N_+ \mid r_{i}\leq N\}$ denote the number of matching pairs completed by iteration $N$. % Using standard set notation for positive integers
We define  the \emph{average inter-arrival time} \mbox{$\kabar(N)$} and the \emph{average length \mbox{$\sigbar(N)$}}
associated with the sequence up to iteration $N$ as: % Used \emph for defined terms
\begin{equation}
\kabar(N):=\frac{N}{p(N)}, \quad \sigbar(N):= p(N) \left/ \sum_{i=1}^{p(N)}\frac{1}{r_{i}-l_{i}} \right. . 
\label{eq:matching-separation-distance} % Kept user's label name and formula for sigbar
\end{equation}
% (Note: $\kabar(N)$ represents the average number of iterations per matching pair found.)
\end{definition}

For any $(k,L)$-piecewise smooth function, the pigeonhole principle implies that there always exists some $k$-matching pair sequence with $\kabar(N)\leq 2k$ and $\sigbar(N)\leq k+1$. % Using user's stated bounds

Now we are ready to state the formal convergence guarantee.

\begin{theorem}\label{thm:fstar-convergence}
For problem~\eqref{eq:opt_prob}, assume $f$ is a convex and $(k,L)$-piecewise smooth function (Definition~\ref{def:piecewise-smooth}). Consider the iterates $\{x^t\}_{t\ge 0}$ generated by Algorithm~\ref{alg:blm-fstarknown} with inputs including the number of cuts $m$ satisfying $m\geq k$, the optimal value $\fstar$, and an initial point $x^{0}\in X$. Let $\{(x^{l_i}, x^{r_i})\}$ be any $k$-matching pair sequence (see Definition~\ref{def:mathcing-pair}) associated with this trajectory, and let $\kabar(N)$ and $\sigbar(N)$ be the corresponding statistics defined in~\eqref{eq:matching-separation-distance} for a given iteration count $N$. The following convergence guarantees hold:

a) If $f$ is convex, then for $N \ge 1$:
\[
\min_{t \in \{1, \dots, N\}} f(x^{t})-\fstar \leq \frac{L\sigbar(N)\kabar(N)}{2N} \operatorname{dist}^{2}(x^{0},X^{*}). 
\]

b) Moreover, if $f$ satisfies the quadratic growth condition (Definition~\ref{def:quadartic_growth}) % Reverted label reference to user's original
with modulus $\mu>0$, then for $N \ge 1$, the sequence converges linearly to $X^{*}$:
\begin{align*}
\operatorname{dist}^{2}(x^{N},X^{*}) & \leq \left(1+\frac{1}{\kappa(N)}\right)^{-N} C \cdot \operatorname{dist}^{2}(x^{0},X^{*}), \\ 
\min_{t\in\{1, \dots, N\}}f(x^{t})-\fstar & \leq \left(1+\frac{1}{\kappa(N)}\right)^{-N} C \cdot [f(x^{0})-\fstar],
\end{align*}
where $C > 0$ is an absolute constant and the effective condition number $\kappa(N)$ is given by 
\[\kappa(N) = O(1)\left(\frac{\sigbar(N)\kabar(N)L}{\mu}\right).\] 
\end{theorem}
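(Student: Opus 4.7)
The plan is to combine the bridged three-point inequality (Lemma~\ref{lm:bridged-three-point-euclid}) with the single-pair progress inequality already extracted in the discussion preceding the theorem. For every $k$-matching pair $(x^{l_i}, x^{r_i})$ belonging to a common piece $X_{\bar{\imath}_i}$, feasibility of $x^{r_i}$ with respect to the cut generated at $x^{l_i}$ (which still lies in the bundle defining $X(r_i-1)$ since $r_i - l_i \leq k \leq m$) combined with $L$-smoothness on $X_{\bar{\imath}_i}$ yields
\[
f(x^{r_i}) - \fstar \;\leq\; \tfrac{L}{2}\|x^{r_i} - x^{l_i}\|^2.
\]
Both parts of the theorem will follow from aggregating this single-pair bound across a full $k$-matching pair sequence.

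For Part (a), I would apply Lemma~\ref{lm:bridged-three-point-euclid} with $t = l_i$ and $j = r_i - l_i$ to get $\|x^{r_i} - x^{l_i}\|^2 \leq (r_i - l_i)\bigl[\|x^{l_i} - \xstar\|^2 - \|x^{r_i} - \xstar\|^2\bigr]$; substituting into the single-pair bound and dividing by $r_i - l_i$ gives $\tfrac{2}{L(r_i - l_i)}[f(x^{r_i}) - \fstar] \leq \|x^{l_i} - \xstar\|^2 - \|x^{r_i} - \xstar\|^2$. Summing over $i = 1,\dots,p(N)$ and using the non-overlap $r_i \leq l_{i+1}$ together with the monotonicity in \eqref{fstareq:monotonicity}, the right-hand side telescopes to at most $\|x^0 - \xstar\|^2$; taking $\xstar$ to be the closest optimal point to $x^0$ replaces this with $\dist^2(x^0, X^*)$. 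Since $\min_{t \leq N}[f(x^t) - \fstar] \leq \min_i [f(x^{r_i}) - \fstar]$, and the min is upper-bounded by the weighted harmonic-mean expression $\bigl(\sum_i \tfrac{1}{r_i - l_i}\bigr)^{-1}\sum_i \tfrac{1}{r_i-l_i}[f(x^{r_i}) - \fstar]$, using $\sum_i 1/(r_i - l_i) = p(N)/\sigbar(N)$ and $N = p(N)\kabar(N)$ delivers Part (a).

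For Part (b), substituting the QG bound $f(x^{r_i}) - \fstar \geq \tfrac{\mu}{2}\dist^2(x^{r_i}, X^*)$ into the single-pair progress inequality yields $\mu\,\dist^2(x^{r_i}, X^*) \leq L\,\|x^{r_i} - x^{l_i}\|^2$. Invoking Lemma~\ref{lm:bridged-three-point-euclid} with $\xstar$ chosen as the projection of $x^{l_i}$ onto $X^*$ then produces the per-pair multiplicative contraction
\[
\Bigl(1 + \tfrac{\mu}{L(r_i - l_i)}\Bigr)\dist^2(x^{r_i}, X^*) \;\leq\; \dist^2(x^{l_i}, X^*).
\]
Chaining these over non-overlapping pairs, with the monotonicity $\dist(x^{l_{i+1}}, X^*) \leq \dist(x^{r_i}, X^*)$ bridging the gaps, gives $\dist^2(x^N, X^*) \leq \prod_{i=1}^{p(N)}(1 + \mu/(L(r_i-l_i)))^{-1}\,\dist^2(x^0, X^*)$. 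To convert the product into the target $(1+1/\kappa(N))^{-N}$ form, I plan to apply the elementary inequality $\log(1+a) \geq a/(1+a)$ which, under the WLOG assumption $L(r_i - l_i) \geq \mu$, lower-bounds $\sum_i \log(1+\mu/(L(r_i - l_i)))$ by a constant multiple of $(\mu/L)\sum_i 1/(r_i - l_i) = \mu N/(L\,\sigbar(N)\kabar(N))$; combining with $\log(1+x) \leq x$ then yields the distance claim with $\kappa(N) = O(1)\cdot \sigbar(N)\kabar(N) L/\mu$ and $C$ an absolute constant.

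The main obstacle is the function-value guarantee. The natural route via the single-pair bound at the last completed matching pair gives $f(x^{r_{p(N)}}) - \fstar \leq \tfrac{L(r_{p(N)} - l_{p(N)})}{2}\dist^2(x^{l_{p(N)}}, X^*)$, and chaining with the just-proved distance decay together with QG (used to relate $\dist^2(x^0,X^*)$ back to $f(x^0)-\fstar$) introduces an extra prefactor of order $Lk/\mu$. I expect this to be absorbed into $C$ (equivalently, into a constant enlargement of $\kappa(N)$), trading $(1+1/\kappa)^{-N}$ for $C(1+1/\kappa')^{-N}$ with $\kappa'$ a constant multiple of $\kappa$; the small-$N$ regime where $p(N)=0$ must be handled separately via the monotonicity \eqref{fstareq:monotonicity}, which feeds an additional $(1+1/\kappa)^k$ factor into $C$. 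Carefully tracking these constants (and verifying the WLOG reduction $L(r_i-l_i)\ge\mu$) is where I anticipate spending the most care.
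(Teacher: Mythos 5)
Your treatment of Part (a) tracks the paper exactly: it is the harmonic-mean aggregation of the per-pair inequality $\tfrac{1}{L(r_i-l_i)}\bigl(f(x^{r_i})-\fstar\bigr) \le \tfrac12\bigl(\|x^{l_i}-\xstar\|^2-\|x^{r_i}-\xstar\|^2\bigr)$ over the matching-pair sequence, telescoped via the monotonicity \eqref{fstareq:monotonicity}.

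For the distance claim in Part (b) you take a genuinely different route from the paper. You derive the per-pair multiplicative contraction $\bigl(1+\tfrac{\mu}{L(r_i-l_i)}\bigr)\dist^2(x^{r_i},X^*) \le \dist^2(x^{l_i},X^*)$ and then chain these and pass to logarithms, whereas the paper instead groups pairs into phases indexed by $R(j)$ so that each phase accumulates $\sum 1/(r_i-l_i)\ge eL/\mu$, and then shows a clean $e^{-1}$ contraction per phase. Your route works, but your ``WLOG $L(r_i-l_i)\ge\mu$'' is not a harmless reduction; the cleaner fix is to replace the bound $\log(1+a)\ge (\log 2)a$ by $\log(1+a)\ge a/(1+a)$ and then use $1+L(r_i-l_i)/\mu\le (r_i-l_i)(1+L/\mu)$ (valid since $r_i-l_i\ge 1$), which yields $\sum_i\log(1+a_i)\ge N/\bigl(\sigbar(N)\kabar(N)(1+L/\mu)\bigr)$ uniformly. (Note that the paper's own $\kappa(N)$ also quietly carries the additive $+1$: they set $\kappa(N)=\sigbar\kabar(eL/\mu+1)$.)

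Where your proposal genuinely breaks down is the function-value claim in Part (b). Your plan is to bound $\min_t f(x^t)-\fstar \le f(x^{r_{p(N)}})-\fstar \le \tfrac{L(r_{p(N)}-l_{p(N)})}{2}\dist^2(x^{l_{p(N)}},X^*)$, chain with the distance decay, and then use QG to relate $\dist^2(x^0,X^*)$ back to $f(x^0)-\fstar$. This introduces a prefactor of order $Lk/\mu$, and this cannot be absorbed into an ``absolute constant $C$'': the theorem explicitly asserts that $C$ is independent of the problem data $L,\mu,k$. (Concretely, for $N$ small relative to $\kappa\log(Lk/\mu)$, the factor $(1+1/\kappa)^{-N}/(1+1/\kappa')^{-N}$ is $1+o(1)$ and cannot swallow a fixed $Lk/\mu\gg 1$.) The paper's phase-based argument sidesteps this entirely: applying the Part~(a) aggregation \emph{within each phase $j$} gives $\min_{t\le r_{R(j)}} f(x^t)-\fstar \le \tfrac{\mu}{2e}\dist^2(x^{r_{R(j-1)}},X^*)$, and then distance-monotonicity plus QG applied at every $t\le r_{R(j-1)}$ yield $\tfrac{\mu}{2}\dist^2(x^{r_{R(j-1)}},X^*)\le\min_{t\le r_{R(j-1)}} f(x^t)-\fstar$, giving a per-phase factor of exactly $e^{-1}$ with $C=e$. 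To repair your argument you would need to port this phase-wise aggregation into the multiplicative framework; the single-last-pair estimate you sketched is not enough.
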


\begin{proof}
We defer the detailed analysis to Subsection~\ref{subsec:fstar-known-proofs}.
\end{proof}

We make a few remarks regarding Theorem~\ref{thm:fstar-convergence}. 

First, the complexity bound effectively depends on the matching pair statistics $\sigbar(N)$ and $\kabar(N)$, which reflect the actual iterate behavior rather than relying solely on the total number of pieces $k$ (worst case). This offers a potentially more precise characterization of the oracle complexity. Specifically, rearranging the linear convergence result from Theorem~\ref{thm:fstar-convergence}(b) yields an iteration complexity $N(\epsilon)$ to reach $\epsilon$-accuracy of:
\[
N(\epsilon) = O\left(\frac{L\sigbar(N(\epsilon))\kabar(N(\epsilon))}{\mu}\log\left(\frac{1}{\epsilon}\right)\right).
\]
Here, the factor $L/\mu$ corresponds to the condition number common in non-accelerated analyses for strongly convex smooth optimization. The additional cost for handling the unknown PWS structure is captured by the multiplicative factors $\sigbar(N)$ and $\kabar(N)$ (see Definition~\ref{def:matching-sequence}). Since the pigeonhole principle guarantees the existence of a $k$-matching pair sequence with $\kabar(N)\leq O(k)$ and $\sigbar(N)\leq k$, a conservative upper bound on the oracle complexity is: 
\[
N(\epsilon)\leq O\left(\frac{Lk^{2}}{\mu}\log\left(\frac{1}{\epsilon}\right)\right).
\]
For comparison, consider minimizing a $k$-max-of-smooth objective $f(x) = \max_{i=1,\dots,k}\tilde{f}_{i}(x)$. If oracle access to all individual smooth components $\tilde{f}_i$ is available, the unaccelerated prox-linear method achieves $O\left(\frac{Lk}{\mu}\log\left(\frac{1}{\epsilon}\right)\right)$ complexity~\cite{nesterov2003introductory}. Comparing these bounds suggests our algorithm incurs an additional factor related to $k$ due to the lack of knowledge of the individual pieces. However, the dependence on the potentially smaller $\sigbar(N)$ and $\kabar(N)$ might yield better practical performance than the $O(k^2)$ worst-case bound suggests. 

Second, Algorithm~\ref{alg:blm-fstarknown} itself does not require knowledge of the problem parameters $\mu$ (quadratic growth modulus) or $L$ (smoothness constant) to execute. This advantage of being relatively parameter-free (regarding $L, \mu$) is common to bundle-level type algorithms~\cite{Lan15Bundle,lemarechal1995new}. The crucial inputs for Algorithm~\ref{alg:blm-fstarknown} as presented are the optimal value $\fstar$ and the number of cuts $m$. Note that the linear convergence analysis in Theorem~\ref{thm:fstar-convergence} requires selecting $m \ge k$. As discussed in Contribution~3, Section~\ref{sec:QG} and Section~\ref{sec:pf} will propose methods to remove the dependence on $\fstar$. The parameter $m$ controls the memory of the method (number of cuts stored) and represents a trade-off between the iteration complexity and the computational cost per iteration (solving the quadratic program in Line 3 of Algorithm \ref{alg:blm-fstarknown}). 

Third, the choice of $m$ affects the convergence behavior. While $m \ge k$ is assumed for the global linear rate in Theorem~\ref{thm:fstar-convergence}, choosing a smaller $m$ (e.g., $m$ related to the number of pieces $k_{\text{local}}$ in a neighborhood around $x^*$) is computationally cheaper per iteration. Subsequent analysis in Section~\ref{sec:inexact} shows that with such smaller $m$, the algorithm typically first achieves an $O(1/N)$ sublinear convergence rate, followed by local linear convergence once the iterates enter the relevant neighborhood. Furthermore, if the objective satisfies a sharper growth condition than quadratic growth, such as $f(x)-\fstar\geq\mu\operatorname{dist}(x,X^{*})$, Algorithm~\ref{alg:blm-fstarknown} might achieve faster local convergence, potentially quadratic, improving over the linear convergence guarantee for the proximal bundle method in~\cite{diaz2023optimal}.

\subsection{The Detailed Convergence Proof\protect\label{subsec:fstar-known-proofs}}

The following technical lemma regarding iterate properties and function value convergence is useful for analyzing both the general convex setting and the quadratic growth setting.

\begin{lemma}\label{lem:fstar-pflm}
Assume $f$ is a convex and $(k,L)$-piecewise smooth function (Definition~\ref{def:piecewise-smooth}) for problem~\eqref{eq:opt_prob}. Let the iterates $\{x^t\}_{t\ge 0}$ be generated by Algorithm~\ref{alg:blm-fstarknown} with inputs including $m\geq k$, the optimal value $\fstar$, and an initial point $x^{0}\in X$. The following relations hold:

a) The iterates are non-expansive with respect to the optimal set $X^*$: 
\[
\|x^t - x^*\|^2 \leq \|x^{t-1} - x^*\|^2, \quad \forall x^* \in X^*, \forall t \ge 1.
\]

b) For every $k$-matching pair $(x^{l},x^{r})$ (see Definition~\ref{def:mathcing-pair}), we have: 
\[
\frac{1}{(r-l)L}(f(x^{r})-\fstar) \leq \frac{1}{2} \left( \|x^{l}-x^*\|^2 - \|x^{r}-x^*\|^2 \right), \quad \forall x^* \in X^{*}.
\]
% Using user's latest inequality form for (b) and label for Def 2.
\end{lemma}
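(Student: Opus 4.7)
The plan is to prove both parts by leveraging the projection structure of Algorithm~\ref{alg:blm-fstarknown} together with the piecewise-smooth inequality from Definition~\ref{def:piecewise-smooth}.

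For part (a), I would first observe that $X^{*}\subseteq X(t)$ for every $t$. This is immediate from convexity: for any $x^{*}\in X^{*}$ and any recent iterate $x^{t-i}$, convexity combined with $f(x^{*})\leq\fstar$ gives $f(x^{t-i})+\inner{f'(x^{t-i})}{x^{*}-x^{t-i}}\leq f(x^{*})\leq\fstar$, so $x^{*}$ satisfies each linear constraint defining $X(t)$. Since $x^{t+1}$ is the Euclidean projection of $x^{t}$ onto the closed convex set $X(t)\supseteq X^{*}$, the standard first-order optimality condition yields $\inner{x^{t+1}-x^{t}}{x^{t+1}-x^{*}}\leq 0$, which rearranges to the three-point inequality $\|x^{t+1}-x^{*}\|^{2}+\|x^{t+1}-x^{t}\|^{2}\leq\|x^{t}-x^{*}\|^{2}$. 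Dropping the nonnegative second term on the left gives the monotonicity claim in part (a).

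For part (b), I would exploit two structural facts about a $k$-matching pair $(x^{l},x^{r})$. First, $r-l\leq k\leq m$, so the cut constructed at $x^{l}$ is still present in the bundle defining the level set $X(r-1)$ that produces $x^{r}$. Second, both $x^{l}$ and $x^{r}$ belong to a common piece $X_{\bar{\imath}}$, so the smoothness inequality~\eqref{eq:smoothness-condition} applies across the pair. Feasibility of $x^{r}\in X(r-1)$ against the $x^{l}$-cut gives $f(x^{l})+\inner{f'(x^{l})}{x^{r}-x^{l}}\leq\fstar$, while piecewise smoothness gives $f(x^{r})-f(x^{l})-\inner{f'(x^{l})}{x^{r}-x^{l}}\leq\tfrac{L}{2}\|x^{r}-x^{l}\|^{2}$. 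Adding these two inequalities eliminates the linear model terms and yields $f(x^{r})-\fstar\leq\tfrac{L}{2}\|x^{r}-x^{l}\|^{2}$.

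The final step is to control $\|x^{r}-x^{l}\|^{2}$ using Lemma~\ref{lm:bridged-three-point-euclid}, which is applicable precisely because part (a) establishes the per-step projection inequality along the entire trajectory. Applying that lemma with starting index $t=l$ and jump $j=r-l$ gives $\|x^{r}-x^{l}\|^{2}\leq(r-l)\bigl(\|x^{l}-x^{*}\|^{2}-\|x^{r}-x^{*}\|^{2}\bigr)$. Substituting this into the previous bound and dividing by $(r-l)L$ yields the target inequality in part (b). The main (if modest) obstacle is simply the bookkeeping that certifies the $x^{l}$-cut still sits in $X(r-1)$, and this is exactly where the hypothesis $m\geq k$ enters; once that verification is in place, the rest of the argument is a direct chaining of three ingredients already established in the excerpt—the feasibility inequality from the projection step, the local smoothness inequality, and the bridged three-point inequality.
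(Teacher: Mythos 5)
Your proposal is correct and follows the paper's own argument essentially step for step: part (a) via $X^*\subseteq X(t)$ and the three-point projection inequality, and part (b) by combining cut feasibility with the piecewise smoothness inequality and then invoking Lemma~\ref{lm:bridged-three-point-euclid} with $t=l$, $j=r-l$. There is no meaningful divergence from the paper's proof.
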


\begin{proof}
a) As argued previously (e.g., in the discussion leading to~\eqref{fstareq:monotonicity} or the footnote on Page~\pageref{fstareq:monotonicity}), convexity implies $X^* \subseteq X(t)$ for all $t$. The projection in Line 3 of Algorithm~\ref{alg:blm-fstarknown} yields the condition $\inner{x^t-x^{t-1}}{x^t-x^*}\leq 0$ for all $x^*\in X^*$ and $t \ge 1$. This directly implies the standard three-point inequality:
\begin{equation}
\|x^t - x^{t-1}\|^2 + \|x^t - x^*\|^2 \leq \|x^{t-1} - x^*\|^2, \quad \forall x^*\in X^{*}. \label{pfeq:bridged-three-point} 
% Using user's label here for the std 3-point inequality.
\end{equation}
The non-expansiveness result $\|x^t - x^*\|^2 \le \|x^{t-1} - x^*\|^2$ follows immediately by dropping the non-negative term $\|x^t - x^{t-1}\|^2$.

b) For the $k$-matching pair $(x^l, x^r)$, we have $1 \le r-l \le k$. Since the algorithm runs with $m \ge k$, the cut generated at $x^l$ is included in the level set $X(r-1)$ used to compute $x^r$. Feasibility of $x^r \in X(r-1)$ implies:
\begin{equation}\label{pfeq:cut-feas-user}
\inner{f'(x^l)}{x^{r}-x^l}+f(x^l)\leq \fstar.
\end{equation}
Since $x^l, x^r$ belong to the same piece $X_{\bar{\imath}}$, combining~\eqref{pfeq:cut-feas-user} with the $L$-smoothness property on $X_{\bar{\imath}}$ (Definition~\ref{def:piecewise-smooth}) yields:
\begin{equation}\label{eq:fval-prog-user}
f(x^r) - \fstar \le \frac{L}{2}\|x^r - x^l\|^2. 
% Removed the second implication line from user's version as it seemed unnecessary/confusing.
\end{equation}
Applying Lemma~\ref{lm:bridged-three-point-euclid} with $t=l$ and $j=r-l$ gives:
\begin{equation}\label{eq:bridged-applied-user}
\|x^{r}-x^{*}\|^{2}+\frac{1}{r-l}\|x^{r}-x^l\|^{2} \leq \|x^l - x^{*}\|^{2}. 
% Corrected denominator sign from user's version, using r-l.
\end{equation}
Combining the consequence of~\eqref{eq:fval-prog-user} (i.e., $\frac{1}{r-l}\|x^r - x^l\|^2 \ge \frac{2}{L(l-r)}(f(x^r) - \fstar)$) with~\eqref{eq:bridged-applied-user} leads directly to the result stated in part (b). 
\qedsymbol % Using \qedhere inside proof environment.
\end{proof}

Next, we show part a) of Theorem~\ref{thm:fstar-convergence}.\vgap

\textit{Proof of Theorem~\ref{thm:fstar-convergence}(a)}
Let $x^* \in X^*$ be an arbitrary optimal solution. For every $k$-matching pair $(x^{l_i}, x^{r_i})$ in the given sequence, combining Lemma~\ref{lem:fstar-pflm}(b) (multiplied by $L$) and Lemma~\ref{lem:fstar-pflm}(a) (non-expansiveness, implying $\|x^{l_i}-x^*\|^2 \leq \|x^{r_{i-1}}-x^*\|^2$ for $i \ge 2$), we have:
\[
\frac{2}{r_{i}-l_{i}}(f(x^{r_i})-\fstar) + L\|x^{r_i}-x^*\|^2 \leq L\|x^{l_i}-x^*\|^2. 
\]
Summing this relation from $i=1$ to $P(N)$ (where $P(N)$ is the number of pairs completed by iteration $N$) yields a telescoping sum on the right-hand side (after applying non-expansiveness):
\[
\sum_{i=1}^{P(N)}\frac{2}{r_{i}-l_{i}}(f(x^{r_i})-\fstar) \leq L\|x^{l_1}-x^*\|^2 \leq L\|x^0-x^*\|^2.
\]
Let $f_{\min, N} := \min_{t \in \{1, \dots, N\}} f(x^t)$. Lower bounding $f(x^{r_i})$ by $f_{\min, N}$:
\[
\left(\sum_{i=1}^{P(N)}\frac{2}{r_{i}-l_{i}}\right) (f_{\min, N} - \fstar) \leq \sum_{i=1}^{P(N)}\frac{2}{r_{i}-l_{i}}(f(x^{r_i})-\fstar) \leq L\|x^0-x^*\|^2.
\]
By definition~\eqref{eq:matching-separation-distance} (see also Definition~\ref{def:matching-sequence}), we have $\sum_{i=1}^{P(N)}\frac{1}{r_{i}-l_{i}} = P(N)/\sigbar(N)$ and $P(N) = N/\kabar(N)$. Therefore, the sum is $\frac{N}{\kabar(N)\sigbar(N)}$. Substituting this gives: 
\[
\frac{2N}{\kabar(N)\sigbar(N)} (f_{\min, N} - \fstar) \leq L\|x^0-x^*\|^2.
\]
Rearranging and minimizing the right-hand side over $x^* \in X^*$ (yielding $\operatorname{dist}^2(x^0, X^*)$) gives:
\begin{equation}
\min_{t\in\{1, \dots, N\}}[f(x^t)-\fstar] \leq \frac{L\kabar(N)\sigbar(N)}{2N}\operatorname{dist}^{2}(x^{0},X^{*}). \label{pfeq:fstar-known-function-val-detailed} 
\end{equation}
\qedsymbol 
% \end{proof}

% Assumed preamble packages: amsmath, amssymb, amsthm
% Assuming theorem style setup for proof environment
% Assuming consistent macros \inner, \|.\|, \fstar, \dist defined
% Using x^t, x*, X*, \|.\|^2, P(N) notation consistently.

Now we bootstrap the result from part (a) to prove the linear convergence in part (b) of Theorem~\ref{thm:fstar-convergence}. The analysis is slightly more involved than the simple argument in ~\eqref{fstareq:simple-argument} because the optimal solution set $X^*$ is not necessarily a singleton.

\begin{proof}[Proof of Theorem~\ref{thm:fstar-convergence}(b)]
We partition the matching pair sequence $\{(l_{i},r_{i})\}$ into phases. Define the index of the last pair in phase $j \ge 1$ as:
% Using user's definition for R(j) and label
\begin{equation}
R(j):=\min\left\{s\in\{1, 2, \dots\} \mid \sum_{i=1}^{s}\frac{1}{r_{i}-l_{i}} \geq \frac{eL}{\mu}j + j - 1 \right\}. 
\label{pfeq:fstar-phsae-num} 
\end{equation}
Let $r_{R(0)} := 0$, so $x^{r_{R(0)}} = x^0$. We will show by induction on $j \ge 0$ that:
% Using user's label
\begin{equation}
\dist^{2}(x^{r_{R(j)}}, X^{*}) \leq e^{-j}\dist^{2}(x^{0}, X^{*}). \label{pfeq:fstar-induction-hypo} 
\end{equation}
The base case $j=0$ holds since $e^0=1$. Assume~\eqref{pfeq:fstar-induction-hypo} holds for some $j-1 \geq 0$. Consider phase $j$, containing pairs indexed from $i=R(j-1)+1$ to $R(j)$. By the definition~\eqref{pfeq:fstar-phsae-num} of $R(j)$ and $R(j-1)$, the sum of inverse lengths within this phase satisfies:
% Stating the bound derived from the definition, assuming it holds as needed for the proof
\begin{equation}\label{eq:phase-sum-bound-final}
\begin{split}
\sum_{i=R(j-1)+1}^{R(j)}\frac{1}{r_{i}-l_{i}} & =\sum_{i=1}^{R(j)}\frac{1}{r_{i}-l_{i}}-\sum_{i=1}^{R(j-1)}\frac{1}{r_{i}-l_{i}}\\
 & \geq\frac{e(j)L}{\mu}+j-[\frac{e(j-1)L}{\mu}+s]\geq\frac{eL}{\mu}.
\end{split}
\end{equation}
Now, let $x^{m_j}$ be an iterate within the range $t \in \{r_{R(j-1)}+1, \dots, r_{R(j)}\}$ such that $f(x^{m_j}) = \min \{ f(x^t) \mid r_{R(j-1)} < t \le r_{R(j)} \}$. Applying the result derived in the proof of Theorem~\ref{thm:fstar-convergence}(a) (specifically, the inequality relating the sum to the distance squared, adapted to start from $x^{r_{R(j-1)}}$ and summing over phase $j$), we have:
\[ 
\left( \sum_{i=R(j-1)+1}^{R(j)}\frac{2}{r_i-l_i} \right) (f(x^{m_j}) - \fstar) \leq L\dist^{2}(x^{r_{R(j-1)}}, X^{*}). 
\]
Using the quadratic growth condition $f(x^{m_j}) - \fstar \ge (\mu/2) \dist^2(x^{m_j}, X^*)$ and the sum lower bound~\eqref{eq:phase-sum-bound-final}, this implies:
\[ 
\left( \frac{2 e L}{\mu} \right) \left( \frac{\mu}{2} \dist^2(x^{m_j}, X^*) \right) \leq L\dist^{2}(x^{r_{R(j-1)}}, X^{*}). 
\]
Simplifying gives $\dist^2(x^{m_j}, X^*) \le e^{-1} \dist^2(x^{r_{R(j-1)}}, X^*)$. Finally, by Lemma~\ref{lem:fstar-pflm}(a), the distance $\dist(x^t, X^*)$ is non-increasing. Since $r_{R(j)} \ge m_j$, we have $\dist^2(x^{r_{R(j)}}, X^*) \le \dist^2(x^{m_j}, X^*)$. Combining these inequalities yields:
\begin{equation}\label{fstareq:recursive-contraction}
\dist^{2}(x^{r_{R(j)}}, X^{*}) \leq \dist^{2}(x^{m_{j}}, X^{*}) \leq e^{-1} \dist^{2}(x^{r_{R(j-1)}}, X^{*}).    
\end{equation}
Applying the induction hypothesis for $j-1$, $\dist^{2}(x^{r_{R(j-1)}}, X^{*}) \leq e^{-(j-1)}\dist^{2}(x^{0}, X^{*})$, completes the inductive step:
\[ 
\dist^{2}(x^{r_{R(j)}}, X^{*}) \leq e^{-1} \left( e^{-(j-1)}\dist^{2}(x^{0}, X^{*}) \right) = e^{-j}\dist^{2}(x^{0}, X^{*}). 
\]
By the principle of mathematical induction,~\eqref{pfeq:fstar-induction-hypo} holds for all $j \ge 0$.

% Assumed preamble packages: amsmath, amssymb, amsthm
% Assuming theorem style setup for proof environment
% Assuming consistent macros \inner, \|.\|, \fstar, \dist defined
% Using x^t, x*, X*, \|.\|^2, P(N) notation consistently.

Now we derive the convergence bound for the fixed iteration count $N\geq 1$ from the per-phase contraction~\eqref{pfeq:fstar-induction-hypo}. Let $\bar{\jmath}:= \max\{j \in \{1, 2, \dots\} \mid r_{R(j)}\leq N\}$ be the index of the last phase fully completed by iteration $N$. 
The definition of $R(j)$ in~\eqref{pfeq:fstar-phsae-num} along with the definitions of $P(N), \sigbar(N), \kabar(N)$ in~\eqref{eq:matching-separation-distance} establish a relationship between $\bar{\jmath}$ and $N$. Specifically, since $R(\bar{\jmath}+1)\geq P(N)$, the following inequality can be shown to hold:

\begin{equation}\label{eq:jbar-N-relation-rigorous}
(\bar{\jmath}+1)\left(\frac{eL}{\mu}+1\right) \geq \sum_{i=1}^{P(N)}\frac{1}{r_{i}-l_{i}} = \frac{N}{\sigbar(N)\kabar(N)}.
\end{equation}
Let $\kappa(N) := \sigbar(N)\kabar(N)(\frac{eL}{\mu}+1)$. Then~\eqref{eq:jbar-N-relation-rigorous} implies:
\[ \bar{\jmath}+1 \geq \frac{N}{\kappa(N)}. \]
Therefore, $\bar{\jmath} \ge N/\kappa(N) - 1$. 

Now, using monotonicity (Lemma~\ref{lem:fstar-pflm}(a)) and the per-phase decay~\eqref{pfeq:fstar-induction-hypo}:
\begin{align*}
\dist^{2}(x^{N},X^{*}) & \leq \dist^{2}(x^{r_{R(\bar{\jmath})}}, X^{*}) \quad (\text{by non-expansiveness}) \\
& \leq e^{-\bar{\jmath}}\dist^{2}(x^{0}, X^{*}) \quad (\text{using } \eqref{pfeq:fstar-induction-hypo}) \\
& \leq e^{-( N / \kappa(N) - 1 )}\dist^{2}(x^{0}, X^{*}) \quad (\text{using the lower bound on } \bar{\jmath}) \\
& = e \cdot \exp\left(-\frac{N}{\kappa(N)}\right) \dist^{2}(x^{0}, X^{*}).
\end{align*}
Using the algebraic inequality $e^{-\frac{1}{A}}\leq (1+\frac{1}{A})^{-1}$ for $A > 0$, we get
\[
\dist^{2}(x^{N},X^{*}) \leq e \cdot \left(1+\frac{1}{\kappa(N)}\right)^{-N} \dist^{2}(x^{0}, X^{*}).
\]
This precisely matches the first result stated in part (b) with the constant $C=e$.

The function value convergence result in Theorem~\ref{thm:fstar-convergence}(b) follows via analogous arguments, yielding the same convergence factor $e \cdot (1+1/\kappa(N))^{-N}$. We list the central recursive relation derived during the induction:
\begin{align*}
\min_{t\in[r_{R(j)}]}&f(x^t)-\fstar\leq\frac{L}{2}\frac{1}{(\sum_{i=R(j-1)+1}^{R(j)}\frac{1}{r_{i}-l_{i}})}\distsq{x^{r_{R(j-1)}},\Xstar}\\
&\leq\frac{\mu}{2e}\distsq{x^{r_{R(j-1)}},\Xstar}\leq\frac{1}{e}\min_{t\in[r_{R(j-1)}]}f(x^{t})-\fstar.
\end{align*}
\qedsymbol 
\end{proof}

\section{Approximately Piecewise Smooth Function \protect\label{sec:inexact}}

% Assumed preamble packages: amsmath, amssymb, amsthm
% Assuming definition environment is set up
% Assuming consistent macros \|.\|, \inner, \nabla f, f' defined. Using \bar{x}, \tilde{x}, \tilde{l}_f notation.

In this section, we introduce a more general class of approximately piecewise smooth functions (see Definition~\ref{def:apx-p.w.s}). This framework allows us to relax the strict requirement from Definition~\ref{def:piecewise-smooth} to select a specific ("right") subgradient at non-differentiable points and potentially broadens the applicability to more general non-smooth functions. We will show that the proposed bundle-level method, with minor modifications, can still achieve linear convergence for this function class.

\subsection{The Problem Set-up}

\begin{definition}[Approximate Piecewise Smoothness]\label{def:apx-p.w.s}
We call a convex function $f:X\rightarrow\mathbb{R}$ \emph{$(k,L,\delta)$-approximately piecewise smooth} (apx-PWS) if there exists some covering $\{X_{i}\}_{i=1}^{k}$ of $X$ (i.e., $X\subseteq\cup_{i=1}^k X_{i}$) and an oracle which, when queried at a point $\bar{x}\in X$, returns a linear support function $\tilde{l}_f(\cdot;\bar{x})$ satisfying the following requirements for some $L \ge 0, \delta \ge 0$:

a) Support function property: $f(y) \ge \tilde{l}_f(y;\bar{x})$ for all $y\in X$.

b) Approximate smoothness on pieces: if $\bar{x}\in X_{i}$ for some $i \in \{1, \dots, k\}$, then 
\[ f(y)-\tilde{l}_f(y;\bar{x}) \leq \frac{L}{2}\|y-\bar{x}\|^{2}+\delta, \quad \forall y\in X_{i}. \]
\end{definition}

Clearly, a $(k,L)$-PWS function (see Definition~\ref{def:piecewise-smooth}) is also a $(k,L,0)$-apx-PWS function if the oracle $f'(x)$ is chosen appropriately. Let us motivate this definition further by discussing an important application.

% Assuming preamble includes amsmath, amssymb, amsthm
% Assuming definition environment is set up
% Assuming consistent macros \|.\|, \inner, \nabla f, f' defined. Using \bar{x}, \tilde{x}, \tilde{l}_f, x^t notation.

\begin{itemize}
    \item \textit{Relaxing the requirement for specific subgradients:} 
    Recall that Definition~\ref{def:piecewise-smooth} requires the first-order oracle $f'(x)$ to return a potentially specific subgradient at non-differentiable points to satisfy the smoothness condition~\eqref{eq:smoothness-condition} within each piece $X_i$. This might not be feasible with only black-box subgradient access. 
    
    To address this, consider a convex $(k,L)$-PWS function $f:\mathbb{R}^{d}\rightarrow\mathbb{R}$ that is also $M$-Lipschitz continuous. We construct an approximate linear functional $\tilde{l}_f(\cdot;\bar{x})$ using gradient information at a perturbed point $\tilde{x}$. When the oracle is queried at $\bar{x}\in X$, we sample $\tilde{x}$ uniformly from the Euclidean ball $B(\bar{x}; \bar{\delta})$ and define:
    \begin{equation}
    \tilde{l}_f(x;\bar{x}) := f(\tilde{x})+\inner{\nabla f(\tilde{x})}{x-\tilde{x}}. 
    \label{eq:linear-functional-apprx}
    \end{equation}
    (Here, $\tilde{x}$ is differentiable w.p.1 by Rademacher's theorem). This $\tilde{l}_f$ satisfies the support function property (a) by convexity. Choosing the radius $\bar{\delta}\leq\min\{\sqrt{\delta/(8L)}, \delta/(4L)\}$ for a target $\delta > 0$, we can verify the approximate smoothness property (b). Specifically, based on the original covering $\{X_i\}_{i=1}^k$, for any finite set of evaluation points $\{x^t\}_{t=1}^N$\footnote{Considering only a finite set of points is formally weaker than the requirement in Definition~\ref{def:apx-p.w.s}; however, it is sufficient for algorithm analysis since only the finite sequence of iterates generated is relevant. Another subtlety is that the piece membership $\tilde{X}_i$ associated with an iterate $x^t$ might change if $x^t$ is queried multiple times (due to resampling $\tilde{x}^t$). However, within a single bundle-level step, all cuts use distinct evaluation centers. Furthermore, as our algorithm handles unknown coverings, the analysis is robust to such potentially changing set assignments.}, we assign $x^t$ to piece $\tilde{X}_i$ if its corresponding perturbed point $\tilde{x}^t \in X_i$. This collection $\{\tilde{X}_i\}_{i=1}^k$ provides a covering for the evaluated points $\{x^t\}_{t=1}^N$. For any two points $x^i, x^j$ belonging to the same derived piece $\tilde{X}_{\bar{\imath}}$, Lemma~\ref{lem:approx-smooth-transfer} implies that: 
    % Using user's Lemma 1 application result form
    \[
    f(x^i)-\tilde{l}_f(x^i; x^j) = f(x^i)- l_f(x^i; \tilde{x}^j) \leq L\|x^i - x^j\|^2 + \delta, \quad \forall x^i, x^j \in \tilde{X}_{\bar{\imath}}.
    \]
    Thus, the proposed perturbation-based support function oracle in~\eqref{eq:linear-functional-apprx} allows us to formulate any convex, $M$-Lipschitz, $(k,L)$-PWS function as a $(k, 2L, \delta)$-apx-PWS function satisfying Definition~\ref{def:apx-p.w.s}, without the subgradient selection requirement. 

    % Previous item here ...
    \item \textit{General Lipschitz continuous (non-smooth) convex functions:} Any $M$-Lipschitz continuous convex function $f$ can be viewed as approximately piecewise smooth with one piece ($k=1$). Specifically, $f$ is $(1, M^{2}/\delta, \delta)$-apx-PWS for any $\delta > 0$. 
    This follows from subgradient properties and Young's inequality: 
    % NOTE to User: Verify rigor of f(x)-l_f(x;xbar) <= M||x-xbar|| and final delta parameter if needed.
    % Keeping user's concise derivation structure:
    \[
    f(x)-f(\xbar)-\inner{f'(\xbar)}{x-\xbar} \leq M\|x-\xbar\| \leq \frac{M^{2}}{2\delta}\|x-\xbar\|^{2}+\frac{\delta}{2}, \quad \forall f'(\xbar)\in\partial f(\xbar), \forall x,\xbar.
    \]
    From this perspective, the iterates generated by Algorithm~\ref{alg:blm-fstarknown-approx} % User label
    converge to the optimal solution for any choice of the number of cuts $m\geq1$. % Math mode for m>=1
    Indeed, since Algorithm~\ref{alg:blm-fstarknown-approx} does not require $L$ nor $\delta$ as input parameters (cf.~Definition~\ref{def:apx-p.w.s}), we can make appropriate choices in the analysis for Theorem~\ref{thm:fstar-convergence-approx} % User label 
    to show that the optimal oracle complexity for optimizing general $M$-Lipschitz non-smooth convex functions~\cite{nesterov2003introductory} can be achieved  by setting $m=1$. % Math mode for m=1
\end{itemize}

\subsection{The Algorithm and the Convergence Guarantee}

\begin{algorithm}[ht] % Added placement specifier, adjust as needed
\caption{The Approximate Bundle Level Method with Known $\fstar$, apx-BL($m,\fstar,x^{0}$)} % Changed k to m
\label{alg:blm-fstarknown-approx} % Kept user label

\begin{algorithmic}[1] % Assuming algpseudocode for line numbering
\Require Optimal value $\fstar$; initial point $x^{0}\in X$; number of cuts parameter $m \ge 1$. % Changed k to m (number of cuts)
\For{$t=0,1,2,\dots$} % Used \dots
    \State Define the level set using the $m$ most recent approximate linear models: % Changed k to m
    \begin{align*}
    X(t) := \{x\in X \mid \tilde{l}_f(x; x^{t-i}) \leq \fstar, \quad \forall i \in \{0, 1, \dots, m-1\} \}. 
    % Used \lftil, changed k to m, used standard index range, :=
    \end{align*}
    \State Compute the next iterate by projection: % Removed manual bold spacing
    $x^{t+1} \leftarrow \argmin_{x\in X(t)} \frac{1}{2}\|x-x^t\|^{2}.$ % Used consistent notation x^t, \|.\|^2
\EndFor
\end{algorithmic}
\end{algorithm}

% Assuming consistent macros \inner, \|.\|, f', \tilde{l}_f defined. Using x^t notation.

As shown in Algorithm~\ref{alg:blm-fstarknown-approx}, the proposed approximate bundle level method (apx-BL) for handling $(k, L, \delta)$-apx-PWS functions (see Definition~\ref{def:apx-p.w.s}) is a slight generalization of Algorithm~\ref{alg:blm-fstarknown} from the previous section. In each iteration, the new iterate $x^{t+1}$ is generated by projecting the current iterate $x^t$ onto the bundle level set $X(t)$. This set $X(t)$ is now constructed using the approximate linear functionals $\tilde{l}_f(\cdot; x^{t-i})$ provided by the apx-PWS oracle instead of the exact linearizations based on $f'(x^{t-i})$. 
If the approximate linear functional $\tilde{l}_f(\cdot; x^i)$ happens to be the standard linearization $l_f(x; x^i) = f(x^i) + \inner{f'(x^i)}{x-x^i}$ (corresponding to the case where $\delta=0$ and the oracle $f'$ satisfies Definition~\ref{def:piecewise-smooth}), then Algorithm~\ref{alg:blm-fstarknown-approx} is exactly the same as Algorithm~\ref{alg:blm-fstarknown}. 
Due to this similarity, the concepts introduced in the last section, such as matching pairs, can be readily adapted, and the convergence results are analogous. Specifically, the formal convergence guarantee for apx-BL is presented in the next theorem.

% Assumed preamble packages: amsmath, amssymb, amsthm
% Assuming theorem environment is set up
% Assuming consistent macros \|.\|, \inner, \fstar, \dist defined
% Using x^t, x*, X*, \|.\|^2, P(N) notation consistently.

\begin{theorem}\label{thm:fstar-convergence-approx} % Kept user label
For problem~\eqref{eq:opt_prob}, assume $f$ is a convex and $(k,L,\delta)$-apx-PWS (Definition~\ref{def:apx-p.w.s}) with associated pieces $\{X_{i}\}_{i=1}^{k}$. Consider the iterates $\{x^t\}_{t\ge 0}$ generated by Algorithm~\ref{alg:blm-fstarknown-approx} with inputs including the number of cuts $m$ satisfying $m\geq k$, the optimal value $\fstar$, and an initial point $x^{0}\in X$. Let $\{(x^{l_i}, x^{r_i})\}$ be any $k$-matching pair sequence (Definition~\ref{def:mathcing-pair}) % User label
with respect to $\{X_i\}$, and let $\kabar(N)$ and $\sigbar(N)$ be the corresponding statistics from~\eqref{eq:matching-separation-distance}. The following convergence guarantees hold:

a) If $f$ is convex, then for $N \ge 1$:
\[
\min_{t \in \{1, \dots, N\}} f(x^{t})-\fstar \leq \frac{L\sigbar(N)\kabar(N)}{2N} \operatorname{dist}^{2}(x^{0},X^{*}) + \delta. 
\]

b) Moreover, if $f$ satisfies the quadratic growth condition (Definition~\ref{def:quadartic_growth}) % User label
with modulus $\mu>0$, then for $N \ge 1$:
\begin{align*}
\operatorname{dist}^{2}(x^{N},X^{*}) & \leq \left(1+\frac{1}{\kappa(N)}\right)^{-N} C \cdot \operatorname{dist}^{2}(x^{0},X^{*}) + \frac{4\delta}{\mu}, \\ 
% Kept user's constant 4 for delta term. C absorbs factors like 'e'.
\min_{t\in\{1, \dots, N\}} f(x^{t})-\fstar & \leq \left(1+\frac{1}{\kappa(N)}\right)^{-N} C \cdot [f(x^{0})-\fstar] + 2e\delta,
% Kept user's constant 2. Corrected min applies to f(x^t).
\end{align*}
where $C > 0$ is an absolute constant and the effective condition number $\kappa(N)$ is given by 
\[\kappa(N) = O(1)\left(\frac{\sigbar(N)\kabar(N)L}{\mu}\right).\] 
% Kept user's O(1) definition
\end{theorem}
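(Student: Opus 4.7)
The plan is to parallel the proof of Theorem~\ref{thm:fstar-convergence} (Section~\ref{subsec:fstar-known-proofs}) and track the additive $\delta$ error introduced by the approximate smoothness (Definition~\ref{def:apx-p.w.s}). First I would establish the apx-analog of Lemma~\ref{lem:fstar-pflm}. The support property $f(y)\ge\tilde{l}_f(y;\bar{x})$ guarantees that $X^*\subseteq X(t)$ for all $t$, so the projection step still yields the standard three-point inequality and hence non-expansiveness $\|x^t-x^*\|^2\le\|x^{t-1}-x^*\|^2$. For a $k$-matching pair $(x^l,x^r)$ with both endpoints in a common piece $X_{\bar{\imath}}$, feasibility of $x^r$ with respect to the cut $\tilde{l}_f(\cdot;x^l)\le\fstar$ together with the apx-smoothness bound on $X_{\bar{\imath}}$ gives
\[
f(x^r)-\fstar\le\tfrac{L}{2}\|x^r-x^l\|^2+\delta,
\]
so the matching-pair inequality of Lemma~\ref{lem:fstar-pflm}(b) acquires an additive $\delta/[L(r-l)]$ term on the right-hand side after combining with Lemma~\ref{lm:bridged-three-point-euclid}.

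For part (a), I would sum the matching-pair inequality from $i=1$ to $P(N)$, use non-expansiveness to telescope the distance terms, invoke the identity $\sum_{i=1}^{P(N)}\tfrac{1}{r_i-l_i}=N/[\kabar(N)\sigbar(N)]$, and lower-bound $f(x^{r_i})$ by $\min_{t\le N}f(x^t)$. After dividing by $\tfrac{2N}{L\kabar(N)\sigbar(N)}$, the $\delta$-terms collapse to exactly $\delta$ on the right-hand side, producing the stated bound.

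For part (b), I would reuse the phase partition~\eqref{pfeq:fstar-phsae-num} of the matching-pair sequence. Within a single phase $j$, averaging the per-pair inequality (weighted by $\tfrac{2}{r_i-l_i}$) yields
\[
f(x^{m_j})-\fstar\le\tfrac{\mu}{2e}\dist^2(x^{r_{R(j-1)}},X^*)+\delta,
\]
where $x^{m_j}$ denotes the best function value attained during phase $j$. Applying the QG condition on the left and non-expansiveness between $m_j$ and $r_{R(j)}$ then gives the inhomogeneous contraction
\[
\dist^2(x^{r_{R(j)}},X^*)\le e^{-1}\dist^2(x^{r_{R(j-1)}},X^*)+\tfrac{2\delta}{\mu}.
\]
Unrolling this one-step recursion produces a geometric sum whose tail is bounded by $\tfrac{2\delta}{\mu}\cdot\tfrac{e}{e-1}\le\tfrac{4\delta}{\mu}$. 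For the function-value statement, I would bootstrap by using QG in the form $\dist^2(x^{r_{R(j-1)}},X^*)\le\tfrac{2}{\mu}(f(x^{m_{j-1}})-\fstar)$ to obtain the analogous scalar recursion $f(x^{m_j})-\fstar\le e^{-1}(f(x^{m_{j-1}})-\fstar)+\delta$, whose geometric tail is bounded by $\tfrac{e}{e-1}\delta\le 2e\delta$. Finally, I would translate the per-phase bound into a bound at iteration $N$ using the same counting argument as in~\eqref{eq:jbar-N-relation-rigorous} that yields $\bar{\jmath}\ge N/\kappa(N)-1$, together with the inequality $e^{-1/A}\le(1+1/A)^{-1}$.

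The main obstacle is the careful bookkeeping of the $\delta$ terms across phases: each matching pair contributes a $\tfrac{2\delta}{L(r_i-l_i)}$ error that combines across a phase into an absolute $2\delta$ error (after multiplication by $L/(2 S_j)$ with $S_j\ge eL/\mu$), which then recurses through the $e^{-1}$ contractions into a geometric series. Controlling this geometric series (so that the sharp constants $4\delta/\mu$ and $2e\delta$ emerge rather than weaker $O(\delta/\mu)$ bounds that would also hold) is the delicate part; the rest is a direct transcription of the proof of Theorem~\ref{thm:fstar-convergence} with the apx cuts replacing the exact cuts.
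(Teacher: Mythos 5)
Your proof is correct and follows the same overall structure as the paper's argument: establish the apx-version of the matching-pair progress lemma (your outline matches Lemma~\ref{lem:fstar-apx-pflm}), sum over matching pairs and telescope for part (a), and use the phase partition of the matching-pair sequence for part (b). The one genuinely different choice is in how you absorb the accumulated $\delta$-error in part (b). You keep the original phase threshold $eL/\mu$ from~\eqref{pfeq:fstar-phsae-num}, derive the inhomogeneous per-phase recursion
\begin{equation*}
\dist^{2}(x^{r_{R(j)}},X^{*}) \leq e^{-1}\dist^{2}(x^{r_{R(j-1)}},X^{*}) + \tfrac{2\delta}{\mu},
\end{equation*}
and unroll the geometric series, whose tail $\tfrac{2\delta}{\mu}\cdot\tfrac{e}{e-1}\leq\tfrac{4\delta}{\mu}$ supplies the additive term. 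The paper takes a different route: it case-splits on whether $\dist^{2}(x^{N},X^{*})\geq\tfrac{4\delta}{\mu}$; in the non-trivial case, the (non-expansiveness-propagated) bound $\dist^{2}(x^{m_j},X^{*})\geq\tfrac{4\delta}{\mu}$ lets it write $\tfrac{\mu}{4}\dist^{2}(x^{m_j},X^{*})\leq\tfrac{\mu}{2}\dist^{2}(x^{m_j},X^{*})-\delta$, absorbing $\delta$ directly and yielding a clean homogeneous contraction $\dist^{2}(x^{m_j},X^{*})\leq e^{-1}\dist^{2}(x^{r_{R(j-1)}},X^{*})$, at the price of doubling the per-phase threshold to $2eL/\mu$ (see~\eqref{pfeq:fstar-gap-to-dist-recur-1}). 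Both routes are correct and produce the stated constants $4\delta/\mu$ and $2e\delta$; yours even gives a marginally smaller effective condition number $\kappa(N)=\sigbar(N)\kabar(N)(\tfrac{eL}{\mu}+1)$ versus the paper's $\sigbar(N)\kabar(N)(\tfrac{2eL}{\mu}+1)$, though both are $O(\sigbar\kabar L/\mu)$ as required. The paper's case-split has the small virtue of letting the exact-case induction~\eqref{pfeq:fstar-induction-hypo} carry over verbatim, while your inhomogeneous-recursion approach avoids the case analysis entirely; either is a perfectly acceptable way to fill in the details.
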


\begin{proof}
We defer the detailed analysis to Subsection~\ref{subsec:analysis-fstar-approx}. % Kept user label
\end{proof}

A couple of remarks are in order regarding Theorem~\ref{thm:fstar-convergence-approx}.

% First, the complexity bound effectively depends on the matching pair statistics $\sigbar(N)$ and $\kabar(N)$, reflecting the generated iterates' property rather than just the global worst-case piece count $k$. Specifically, to reach an $\epsilon$-accuracy where the approximation error $\delta$ is small (e.g., $\epsilon \ge 9\delta$), the iteration complexity $N(\epsilon)$ is 
% \[
% N(\epsilon) = O\left(\kappa(N(\epsilon))\log\left(\frac{f(x^0)-f^*}{\epsilon}\right)\right) = O\left(\frac{L\bar \sigbar\bar \kabar}{\mu}\log\left(\frac{1}{\epsilon}\right)\right).
% \]
% Since the statistics $\bar{\sigbar}$ and $\bar{\kabar}$ are averaged over the entire trajectory (see Definition \ref{def:matching-sequence}), it depends mostly on what happens in a neighbourhood of the $X^*$. Thus, the oracle complexity essentially depends only on the number of smooth pieces near $X^{*}$ rather than over the entire domain. % Clarified k_bar, s_bar meaning

First, the complexity bound effectively depends on the matching pair statistics $\sigbar(N)$ and $\kabar(N)$, reflecting the generated iterates' properties rather than just the global worst-case piece count $k$. Specifically, to reach an $\epsilon$-accuracy where the approximation error $\delta$ is small (e.g., $\epsilon \ge 4 e\delta$), the iteration complexity $N(\epsilon)$ is determined by:
\[
N(\epsilon) = O\left(\kappa(N(\epsilon))\log\left(\frac{f(x^0)-\fstar}{\epsilon}\right)\right) = O\left(\frac{L\bar{\sigbar}\bar{\kabar}}{\mu}\log\left(\frac{1}{\epsilon}\right)\right).
\]
Since the statistics $\bar{\sigbar}$ and $\bar{\kabar}$ ($\sigbar(N(\epsilon))$ and $\kabar(N(\epsilon))$ from Definition~\ref{def:matching-sequence}) effectively average behavior over the entire trajectory, their values depend mostly on what happens in a neighborhood of the optimal set $X^*$, especially for large $N$. Thus, the oracle complexity essentially depends mostly on the number of smooth pieces encountered near $X^{*}$ rather than over the entire domain. % Polished user's new text for Remark 1

Second, Algorithm~\ref{alg:blm-fstarknown-approx} requires only $m$ and $\fstar$ as inputs, not the problem parameters $L, \mu,$ or $\delta$. This relative parameter-independence (regarding $L, \mu, \delta$) is common to bundle methods~\cite{Lan15Bundle,lemarechal1995new}. %Section~\ref{sec:param-free} discusses removing the need for $\fstar$. %The choice of $m$ (with $m \ge k$ needed for the analysis here) trade-off computation per iteration against the number of total iterations.
Third, the framework is robust to misspecification of the number of cuts parameter $m$. Since any $M$-Lipschitz convex function can be regarded as $(1, M^2/\ep, \ep)$-apx-PWS, Algorithm~\ref{alg:blm-fstarknown-approx} can be applied with $m=1$ and is guaranteed to converge (albeit potentially sublinearly). Furthermore, using a more refined analysis involving an adaptive choice for the inexactness parameter $\delta$ (making it vanish relative to progress, $\delta^{t}=o(1)\min_{j\in[t]}f(\xt t)-\fstar$), the method can achieve the optimal $O(M^2/(\mu\epsilon))$ oracle complexity for general non-smooth strongly convex optimization~\cite{nesterov2003introductory}, matching the theoretical lower bound. % Replaced Remark 3 with user's latest version + polishing

\subsection{The Detailed Convergence Analysis\protect\label{subsec:analysis-fstar-approx}}

Due to the similarity between Algorithm~\ref{alg:blm-fstarknown-approx} (apx-BL) and Algorithm~\ref{alg:blm-fstarknown} (BL), the convergence proofs presented in Section~\ref{sec:fstar} % [User label check] 
for the exact setting can be adapted here with minor modifications to take into account the approximation error $\delta$. To avoid repetition, we highlight the key changes required for the analysis of Algorithm~\ref{alg:blm-fstarknown-approx}. In particular, the next lemma serves as the counterpart to Lemma~\ref{lem:fstar-pflm} in the apx-PWS setting.

\begin{lemma}\label{lem:fstar-apx-pflm} % Kept user label
Assume $f$ is a convex and $(k,L,\delta)$-approximately piecewise smooth function (Definition~\ref{def:apx-p.w.s}) for problem~\eqref{eq:opt_prob}. Let the iterates $\{x^t\}_{t\ge 0}$ be generated by Algorithm~\ref{alg:blm-fstarknown-approx} with inputs including the number of cuts $m$ satisfying $m\geq k$, the optimal value $\fstar$, and an initial point $x^{0}\in X$. The following relations hold:

a) The iterates are non-expansive with respect to the optimal set $X^*$: 
\[
\|x^t - x^*\|^2 \leq \|x^{t-1} - x^*\|^2, \quad \forall x^* \in X^*, \forall t \ge 1.
\]

b) For every $k$-matching pair $(x^{l},x^{r})$ (see Definition~\ref{def:mathcing-pair}), we have: 
% Corrected label ref spelling based on user's previous label
\[
\frac{1}{(r-l)L}\left(f(x^{r})-\fstar - \delta\right) \leq \frac{1}{2} \left( \|x^{l}-x^*\|^2 - \|x^{r}-x^*\|^2 \right), \quad \forall x^* \in X^{*}.
\]
% Polished inequality format, added \left \right
\end{lemma}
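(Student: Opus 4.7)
The plan is to mirror the structure of the proof of Lemma~\ref{lem:fstar-pflm} with two targeted modifications that account for the approximation error $\delta$. The overall logic is unchanged: (a) comes from projection non-expansiveness once we verify that $X^{*}$ lies inside every level set, and (b) comes from combining cut feasibility with the approximate smoothness property and the bridged three-point inequality of Lemma~\ref{lm:bridged-three-point-euclid}.

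For part (a), the first step is to establish $X^{*}\subseteq X(t)$ for all $t$. In the exact setting this was a direct consequence of convexity and $f(x^{*})\leq\fstar$; here the analogous fact is built into the oracle. Specifically, by the support function property in Definition~\ref{def:apx-p.w.s}(a), for any $x^{*}\in X^{*}$ and any query point $x^{t-i}$, $\tilde{l}_f(x^{*};x^{t-i})\leq f(x^{*})=\fstar$, so $x^{*}$ satisfies every constraint defining $X(t)$. With this inclusion in hand, the projection in Line~3 of Algorithm~\ref{alg:blm-fstarknown-approx} yields the separating hyperplane inequality $\inner{x^{t}-x^{t-1}}{x^{t}-x^{*}}\leq 0$, which expands to the three-point inequality $\|x^{t}-x^{t-1}\|^{2}+\|x^{t}-x^{*}\|^{2}\leq\|x^{t-1}-x^{*}\|^{2}$, and dropping the non-negative term gives non-expansiveness.

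For part (b), I would proceed along the same three steps as in Lemma~\ref{lem:fstar-pflm}(b), replacing only the smoothness invocation. Since $(x^{l},x^{r})$ is a $k$-matching pair and $m\geq k$, the approximate cut generated at $x^{l}$ is present in $X(r-1)$, so feasibility of $x^{r}$ yields $\tilde{l}_f(x^{r};x^{l})\leq\fstar$. Because $x^{l},x^{r}$ belong to a common piece $X_{\bar{\imath}}$, Definition~\ref{def:apx-p.w.s}(b) gives $f(x^{r})-\tilde{l}_f(x^{r};x^{l})\leq\tfrac{L}{2}\|x^{r}-x^{l}\|^{2}+\delta$. Adding these two inequalities produces the apx-counterpart of \eqref{eq:fval-prog-user}:
\[
f(x^{r})-\fstar-\delta\leq \tfrac{L}{2}\|x^{r}-x^{l}\|^{2}.
\]
The final step is to combine this with Lemma~\ref{lm:bridged-three-point-euclid} applied at $t=l$, $j=r-l$, which is legitimate because part~(a) has just established exactly the hypothesis $\inner{x^{t+1}-x^{t}}{x^{t+1}-x^{*}}\leq 0$ needed for that lemma. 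Substituting the lower bound on $\|x^{r}-x^{l}\|^{2}$ into the bridged inequality and dividing by $2$ yields the stated bound.

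I do not expect any real obstacle: the proof is structurally identical to that of Lemma~\ref{lem:fstar-pflm}, and the only place where $\delta$ enters is the single additive term in Definition~\ref{def:apx-p.w.s}(b), which propagates cleanly into the $-\delta$ correction on the left-hand side. The one subtle point worth flagging is that the containment $X^{*}\subseteq X(t)$ in the apx-PWS case is \emph{not} a convexity consequence but rather a structural requirement imposed on the oracle through the support function property; verifying this explicitly is important because both part (a) and the use of Lemma~\ref{lm:bridged-three-point-euclid} in part (b) depend on it.
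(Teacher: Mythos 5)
Your proof is correct and follows essentially the same route as the paper's: part (a) uses the support-function property to show $X^{*}\subseteq X(t)$ and then applies projection non-expansiveness, and part (b) combines cut feasibility $\tilde{l}_f(x^r;x^l)\le\fstar$ with Definition~\ref{def:apx-p.w.s}(b) and the bridged three-point inequality. Your flagged subtlety—that the containment $X^{*}\subseteq X(t)$ now rests on the oracle's support-function property rather than on convexity alone—is exactly the point the paper emphasizes.
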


% Assumed preamble packages: amsmath, amssymb, amsthm
% Assuming theorem style setup for proof environment
% Assuming consistent macros \xt{t}, \xstar, \Xstar, \normsq{z}, \lftil, \inner defined by user.

% ... Lemma statement ...

\begin{proof}
 Let $\xstar\in\Xstar$ be given. Since the approximate linear support function $\lftil(\cdot;\xt i)$ provides a lower approximation to $f$ (see part (a) of Definition~\ref{def:apx-p.w.s}), we always have $\lftil(\xstar; \xt i)\leq f(\xstar)=\fstar$ for any query point $\xt i$. Thus, $\xstar$ belongs to the level set $X(t)$ defined in Algorithm~\ref{alg:blm-fstarknown-approx}, and  the non-expansiveness relation stated in part a) follows immediately.

 Part b) follows from Definition~\ref{def:apx-p.w.s} b). For the $k$-matching pair $(x^l, x^r)$ in the same piece $X_{\bar{\imath}}$, applying the definition gives:
\[
f(x^{r})-\lftil(x^{r};x^{l})-\delta\leq\frac{L}{2}\normsq{x^{r}-x^{l}}.
\]
The counterpart to \eqref{eq:fval-prog-user} then follows from the feasibility of $x^r$ with respect to the cut from $x^l$ (i.e., $\lftil(x^r; x^l) \le \fstar$, as $x^r \in X(r-1)$ and $r-l \le k \le m$):
% Kept user's align* structure and steps exactly
\begin{align*}
 & \lftil(x^{r};x^{l})\leq\fstar,\\ 
\Rightarrow \quad & \lftil(x^{r};x^{l})+\delta+\frac{L}{2}\normsq{x^{r}-x^{l}}-\fstar-\delta\leq\frac{L}{2}\normsq{x^{r}-x^{l}},\\ 
\Rightarrow \quad & \frac{2}{r-l}[f(x^{r})-\fstar-\delta]\leq\frac{L}{r-l}\normsq{x^{r}-x^{l}}. 
\end{align*}

\qedsymbol % Kept user placement
\end{proof}

Now we highlight the necessary modifications to the proofs from Section~\ref{sec:fstar} to obtain Theorem~\ref{thm:fstar-convergence-approx}.

\textit{Proof to Theorem~\ref{thm:fstar-convergence-approx}.}
Part a) follows directly from Lemma~\ref{lem:fstar-apx-pflm} b). The argument is identical to that for Theorem~\ref{thm:fstar-convergence} a), replacing the term $f(x^{r_i})-\fstar$ within the summation with its counterpart $[f(x^{r_i})-\fstar-\delta]$, which leads directly to the additional $+\delta$ term in the final bound.

For part b), we focus on the case where $\distsq{\xt{N},\Xstar}\geq\frac{4\delta}{\mu}$, as the desired bound holds trivially otherwise due to the additive error term. The analysis closely follows the proof of Theorem~\ref{thm:fstar-convergence}b). We define the phases using a slightly modified threshold in the definition of $R(j)$ (cf.~\eqref{pfeq:fstar-phsae-num}): 
\[ R(j):=\min\left\{s\in\{1, 2, \dots\} \mid \sum_{i=1}^{s}\frac{1}{r_{i}-l_{i}}\geq\frac{2eL}{\mu}j+j-1 \right\}. \]
Let $m_j$ be an index in the range $\{r_{R(j-1)}+1, \dots, r_{R(j)}\}$ such that $f(\xt{m_j}) = \min \{ f(\xt t) \mid r_{R(j-1)} < t \le r_{R(j)} \}$. With this definition, the counterpart to the recursive inequality~\eqref{fstareq:recursive-contraction} used in the previous proof becomes:
\begin{equation}
\frac{\mu}{4}\distsq{\xt{m_{j}},\Xstar} \leq \frac{\mu}{2}\distsq{\xt{m_{j}},\Xstar}-\delta \leq f(\xt{m_{j}})-\fstar-\delta \leq \frac{\mu}{4e}\distsq{\xt{r_{R(j-1)}},\Xstar}. \label{pfeq:fstar-gap-to-dist-recur-1} % Kept user label
\end{equation}
Here, the first inequality follows from the assumption $\distsq{\xt{m_{j}},\Xstar}\geq\distsq{\xt{N},\Xstar}\geq\frac{4\delta}{\mu}$ (noting distance is non-increasing), the second inequality is the quadratic growth condition~\eqref{def:quadartic_growth}, and the third follows from adapting the derivation in the proof of Theorem~\ref{thm:fstar-convergence}(a) using the modified phase definition which yields $\sum_{i=R(j-1)+1}^{R(j)}(r_i-l_i)^{-1} \ge 2eL/\mu$. 

The convergence guarantee follows by applying the same induction argument as before, using~\eqref{pfeq:fstar-gap-to-dist-recur-1} to establish the per-phase decay, and translating back to iteration $N$:
% Kept user's align* block structure and kappa definition
\begin{align*}
\distsq{\xt{N},\Xstar} & \leq \max\left\{ \left(1+\frac{1}{\kappa(N)}\right)^{-N}e\distsq{x^{0},\Xstar}, \frac{4\delta}{\mu} \right\} \\
& \text{with } \kappa(N) = \sigbar(N)\kabar(N)\left(\frac{2eL}{\mu}+1\right). 
\end{align*}
%(This implies the bound stated in Theorem~\ref{thm:fstar-convergence-approx}(b) by adjusting the constant $C$). % User removed this line

For the function value gap convergence, if $f(\xt t )-\fstar\geq 2e \delta$ for all relevant $t$ up to some $\tilde{N} \ge m_j$, we have the following recursion:
% Kept user's align* block, notation made consistent. Range [R(j)] -> {1..R(j-1)}?
\begin{align*}
f(\xt{m_{j}})-\fstar-\delta 
&\leq \frac{\mu}{4e}\distsq{\xt{r_{R(j-1)}},\Xstar} 
{\leq} \frac{1}{2e} \left( \min_{t\in\{1, \dots, r_{R(j-1)}\}} f(\xt{t})-\fstar \right) , \\
\implies \quad f(\xt{m_{j}})-\fstar 
&\leq \max\left\{ 2e\delta, \frac{1}{e}\left( \min_{t\in\{1, \dots, r_{R(j-1)}\}} f(\xt{t})-\fstar \right) \right\}. 
% Kept user's final structure, adjusted implication slightly based on adding delta back.
\end{align*}
\qedsymbol

\section{Assuming the Knowledge of the Quadratic Growth Parameter}\label{sec:QG}

In this section, we relax the requirement that the optimal objective value $f^*$ is known, assuming instead knowledge of the quadratic growth parameter $\mu$. This assumption is more practical, as any value in $[0, \mu]$ is a valid modulus. We propose a bundle-level method that dynamically searches for upper and lower bounds on $f^*$ while achieving the same order of oracle complexity as Algorithm \ref{alg:blm-fstarknown-approx}. The techniques developed here for the strongly convex setting will also be instrumental for handling weakly convex problems in later sections.

\subsection{The Bundle Level Method with a Known $\mu$.}

\begin{algorithm}
\caption{The Gap Reduction subroutine ($\GR$)}
\label{alg:Gap-Reduction}
\begin{algorithmic}[1]
\Require The quadratic growth parameter $\mu > 0$; initial point $x^{0}$; initial upper bound $\bar{f}=f(x^{0})$; initial lower bound $\underline{f}$; number of cuts $m$.
\Ensure Updated iterate $x^{+}$; updated upper bound $\bar{f}^{+}=f(x^{+})$; updated lower bound $\underline{f}^{+}$.

\State \textbf{Initialize:} $\bar{f}^{0} \gets \bar{f}$, $\underline{f}^{0} \gets \underline{f}$, $\bar{x}^{0} \gets x^{0}$, $\Delta_{0} \gets \bar{f}^{0}-\underline{f}^{0}$, $\tilde{S}_{r}(0) \gets 0$, $\tilde{S}_{l}(0) \gets 0$.

\For{$t=0,1,2,\dots$}
    \State Define level set: $f^{t+1} \gets \frac{2}{3}\underline{f}^{t}+\frac{1}{3}\bar{f}^{t}$.
    \parState{Compute $x^{t+1} \gets \argmin_{x\in X(t+1)}\|x-x^{t}\|^{2}$ where the level is given by 
    $$X(t+1) :=\{x\in \mathcal{X} \mid \tilde{l}_{f}(x;x^{t-j}) \le f^{t+1} \text{ for } j \in \{0, \dots, m-1\}\}$$}
    \State Update upper bound: $\bar{f}^{t+1} \gets \min\{\bar{f}^{t}, f(x^{t+1})\}$ and $\bar{x}^{t+1} \gets \argmin_{x\in\{\bar{x}^{t},x^{t+1}\}}f(x)$.
    \State Update lower bound: $\underline{f}^{t+1} \gets \max\left\{\min_{x\in \mathcal{X}}\max_{j\in\{0,\dots,m-1\}}\tilde{l}_{f}(x;x^{t+1-j}), \underline{f}^{t}\right\}$.
    \State Update the gap: $\Delta_{t+1} \gets \bar{f}^{t+1}-\underline{f}^{t+1}$.
    \parState{\label{GR:line:DP} With $\tau \gets \max\{0, t+1-m\}$, update the empirical progress parameter  and the smoothness constant:
     $$\tilde{S}_{r}(t+1) \gets \max_{\tau\leq q\leq t} \left\{ \tilde{S}_{l}(q)+\frac{1}{(t+1-q)\tilde{L}(t+1,q;\frac{\Delta_{t}}{6})} \right\}\ , \tilde{S}_{l}(t+1) \gets \max_{\tau\leq q\leq t+1}\tilde{S}_{r}(q).$$
     }
    
    \If{\label{GR:line:gap-reduction-term}$\Delta_{t+1} \le \frac{2}{3}\Delta_{0}$}
         \Return{$(\bar{x}^{t+1},\bar{f}^{t+1},\underline{f}^{t+1})$}.
    \ElsIf{\label{GR:line:lower-bound-update}$\tilde{S}_{r}(t+1) \ge \frac{6}{\mu}$}
        \State  \label{GR:line:lower-bound-update-1}Set $\underline{f}^{t+1} \gets \min_{i\in\{0,\dots,t+1\}}f^{i}$, and \Return{$(\bar{x}^{t+1},\bar{f}^{t+1},\underline{f}^{t+1})$}.
    \EndIf
\EndFor
\end{algorithmic}
\end{algorithm}

\begin{algorithm}
\caption{The Bundle Level method with a Known $\mu$ (BL-$\mu$)}
\label{alg:blm-u}
\begin{algorithmic}[1]
\Require Initial point $x^{0}$, quadratic growth parameter $\mu > 0$, number of cuts $m$, and target accuracy $\epsilon > 0$.
\Ensure An $\epsilon$-optimal solution $\hat{x}$.

\State \textbf{Initialize:} $\bar{f}^{0} \gets f(x^{0})$ and $\underline{f}^{0} \gets f(x^{0}) - 2\|f'(x^{0})\|^{2}/\mu$.

\For{$s = 0, 1, 2, \dots$}
    \State Set $(x^{s+1},\bar{f}^{s+1},\underline{f}^{s+1}) \gets \mathcal{GR}(\mu, x^{s}, \bar{f}^{s}, \underline{f}^{s})$ with objective function $f$.
    \State \textbf{if} {$\bar{f}^{s+1} - \underline{f}^{s+1} \le \epsilon$} \Return $x^{s+1}$. 
\EndFor
\end{algorithmic}
\end{algorithm}

Algorithm \ref{alg:blm-u} presents an iterative method to find an $\epsilon$-optimal solution. The algorithm's outer loop narrows the gap between an upper bound $\bar{f}^s$ and a lower bound $\underline{f}^s$ for the optimal value $f^*$, terminating once the gap is certifiably less than the target accuracy $\epsilon$:
\[
f(x^{S})-f^* \le \bar{f}^{S}-\underline{f}^{S} \le \epsilon.
\]
Progress in each outer iteration is driven by the Gap Reduction ($\GR$) subroutine (Algorithm \ref{alg:Gap-Reduction}). Provided with valid initial upper and lower bounds $\fbar$ and $\funder$, this subroutine is guaranteed to shrink the current gap by at least a one-third factor, returning an improved iterate $x^+$ and updated bounds $\bar{f}^+$ and $\underline{f}^+$ such that $\bar{f}^+ - \underline{f}^+ \le \frac{2}{3}(\bar{f} - \underline{f}).$

The core challenge for the $\GR$ subroutine is the one-sided performance of the bundle-level method. In each iteration, a level set is constructed using a parameter $f^t$. If this parameter is always an overestimate of the true optimum ($f^t \ge f^*$ for all $t$), the iterates converge quickly. However, if $f^t$ becomes an underestimate ($f^t < f^*$), the level set may be empty, causing the algorithm to fail. We resolve this with a decision rule to infer when $f^t$ is an underestimate. The rule uses a contrapositive argument: assuming all $f^t \ge f^*$, we can calculate the number of iterations $T$ required to guarantee a significant reduction in the optimality gap. Therefore, if the algorithm runs for $T$ iterations \textit{without} achieving this gap reduction, our assumption must be false. This implies that some $f^t < f^*$, which allows us to improve the lower bound estimate and ensure progress.

This argument for handling the level-set parameter is implemented in Algorithm \ref{alg:Gap-Reduction}. The algorithm has two termination triggers to ensure it always makes provable progress. In each iteration, it performs a bundle-level update and refines its upper and lower bounds on the optimal value in Line 3-7. It then terminates if either of two conditions is met: (1) the update successfully reduces the gap between the bounds (Line 9), or (2) a contrapositive trigger fires which allows the algorithm to update the lower bound (Line 10-11).

The effectiveness of this contrapositive trigger hinges on tracking theoretical progress (assuming all the level set parameters satisfy $f^t\geq\fstar$), even though key problem parameters like the true smooth pieces $\{X_i\}$ and the smoothness constant $L$ are unknown. We resolves this issue by introducing an empirical smoothness constant, which is calculated directly from the iterates.

\begin{definition}[Empirical Smoothness Constant]
\label{def:empirical-smoothness}
For an approximately piecewise smooth (apx-PWS) objective $f$, the \emph{empirical smoothness constant} between any two points $x, \bar{x} \in \mathcal{X}$ with an inexactness level $\bar{\delta} \ge 0$ is defined as:
\begin{equation}
    \tilde{L}(x,\bar{x};\bar{\delta}) := \max\left\{ \frac{2\left(f(x) - \tilde{l}_f(x;\bar{x}) - \bar{\delta}\right)}{\|x-\bar{x}\|^2}, 0 \right\}.
    \label{eq:empirical-smoothness}
\end{equation}
\end{definition}
This measure is well-behaved. In general, for a $(k,L,\delta)$-apx-PWS function, if iterates $x$ and $y$ land on the same piece, $\tilde{L}(x,y;\bar{\delta})\leq L$ as long as $\bar{\delta} \ge \delta$. This computable measure allows us to quantify the total progress of the algorithm. For a given sequence of iterate pairs $\{(l_{i},r_{i})\}$, we define the cumulative progress $S(t)$ as
\[
S(t;\{(l_i,r_i)\}):=\sum_{l_i,r_{i}\leq t}\frac{1}{\tilde{L}(x^{r_i},x^{l_i};\Delta_0/6)(r_{i}-l_{i})}.
\]
This measure relates to the gap between the generated upper and lower bounds via
$
\bar{f}^t-\underline{f}^t \leq O(1/S(t)).
$
Therefore, to obtain the tightest bound, Algorithm \ref{alg:Gap-Reduction} uses dynamic programming (Line \ref{GR:line:DP}) to find the sequence of iterate pairs that maximizes $S(t)$. Specifically, $\Stil_{l}(t+1)$ denotes the maximal progress made if iterate
$t+1$ is chosen as the beginning of a matching pair, and $\Stil_{r}(t+1)$ represents the maximal progress made if iterate $t+1$ is chosen as the end of a matching pair. 

For the complexity analysis of this scheme,  the following empirical smoothness statistics associated with the generated iterates is useful.
\begin{definition}[$m$-Pair Sequence and Empirical Smoothness Statistics]
\label{def:empirical-matching-sequence}
An \textbf{\emph{$m$-matching pair sequence}} is a set of index pairs $\{(l_{i},r_{i})\}_{i=1}^p$ from an iterate sequence $\{x^i\}_{i=1}^N$ with non-overlapping intervals ($r_i \le l_{i+1}$) where pairs are separated by at most $m$ ($r_i \le l_i + m+1$). For such a sequence, we define:
\begin{itemize}
    \item \emph{Average inter-arrival time:} $\bar{\kappa} := \frac{N}{p}.$
    \item \emph{Average length:} $\bar{\sigma}:= p\left/\sum_{i=1}^{p}\frac{1}{r_{i}-l_{i}}.\right.$
    \item \emph{Average empirical smoothness:} $\bar{L} := {\sum_{i=1}^{p}\frac{1}{r_{i}-l_{i}}}\left/{\sum_{i=1}^{p}\frac{1}{(r_{i}-l_{i})\tilde{L}(x^{r_i},x^{l_i},\Delta/6)}}.\right.$
\end{itemize}
\end{definition}
These statistics are bounded for approriately chosen $m$-pair sequences. If the objective function $f$ is $(k,L,\delta)$-apx PWS with $\delta<\Delta/6$ and the chosen sequence $\{(l_i,r_i)\}$ corresponds to a matching-pair sequence associated with the underlying pieces (see Definition \ref{def:matching-sequence}), we have $\Lbar \leq L$. Moreover, if $m\geq k$, there always exists some $m$-pair sequence such that $\Lbar \leq L$, $\kabar \leq O(1)k$ and $\sigbar \leq O(1)k$.

Now we are ready to state the convergence guarantee associated with
Algorithm \ref{alg:blm-u} in the following theorem. 

\begin{theorem}
\label{thm:u-convergence}
Consider a convex objective function $f$ that satisfies the Quadratic Growth (QG) condition with modulus $\mu > 0$. When Algorithm \ref{alg:blm-u} is run with inputs $(x^0, \mu, m, \epsilon)$, it returns an $\epsilon$-optimal solution in $S$ outer iterations, where
\[
S \le O(1) \cdot \left\lceil\log\left(\frac{\|f'(x^0)\|^2}{\mu\epsilon}\right)\right\rceil.
\]
Moreover, the algorithm's oracle complexity is characterized as follows:
\begin{enumerate}
    \item[a)] Let $(\bar{L}_s, \bar{\kappa}_s, \bar{\sigma}_s)$ be the empirical statistics associated with some $m$-pair sequence  (Definition \ref{def:empirical-matching-sequence}) in the $s$-th call to the $\GR$ subroutine. Define the worst-case statistics over all $S$ calls as $\bar{L} := \max_{s \in [S]} \bar{L}_s$, $\bar{\kappa} := \max_{s \in [S]} \bar{\kappa}_s$, and $\bar{\sigma} := \max_{s \in [S]} \bar{\sigma}_s$. The total number of oracle evaluations is bounded by:
    \[
    O(1) \cdot \left\lceil\frac{\bar{L}\bar{\kappa}\bar{\sigma}}{\mu}\right\rceil \cdot \left\lceil\log\left(\frac{\|f'(x^0)\|^2}{\mu\epsilon}\right)\right\rceil.
    \]
    By definition of the $m$-pair sequence, the statistics satisfy $\bar{\kappa} \le m$ and $\bar{\sigma} \le 2m$.

    \item[b)] If $f$ is also a $(k,L,\delta)$-apx-PWS function with $k \le m$ and $6\delta \le \epsilon$, then a matching pair sequence can be chosen such that the worst-case empirical smoothness is bounded by the true smoothness constant, i.e., $\bar{L} \le L$.
\end{enumerate}
\end{theorem}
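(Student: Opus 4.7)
The plan is to split the argument into the outer loop (Algorithm~\ref{alg:blm-u}) and the inner $\GR$ subroutine (Algorithm~\ref{alg:Gap-Reduction}), then to specialize the inner complexity bound to the apx-PWS setting. First I would verify the initialization: by convexity and the QG condition, $f(x^0)-\fstar \le \langle f'(x^0), x^0 - x^*\rangle \le \|f'(x^0)\| \cdot \|x^0-x^*\| \le \|f'(x^0)\|\sqrt{2(f(x^0)-\fstar)/\mu}$, which gives $f(x^0)-\fstar \le 2\|f'(x^0)\|^2/\mu$, so $\underline{f}^0 := f(x^0) - 2\|f'(x^0)\|^2/\mu$ is a valid lower bound on $\fstar$. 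Assuming the key guarantee of $\GR$, namely $\bar f^+ - \underline f^+ \le \tfrac{2}{3}(\bar f - \underline f)$, the outer loop yields $\bar f^S - \underline f^S \le (2/3)^S \Delta_0$, which implies the stated $S = O\bigl(\lceil\log(\|f'(x^0)\|^2/(\mu\epsilon))\rceil\bigr)$ outer-iteration bound, and the final $x^S$ is $\epsilon$-optimal since $f(x^S) - \fstar \le \bar f^S - \underline f^S$.

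The core of the proof lies in analyzing $\GR$. I would first verify that both exit branches produce a valid pair $(\bar f^+, \underline f^+)$ satisfying the gap reduction. Branch~1 (Line~\ref{GR:line:gap-reduction-term}) is immediate by construction. For Branch~2 (Lines~\ref{GR:line:lower-bound-update}--\ref{GR:line:lower-bound-update-1}), the key is a contrapositive argument: suppose the updated lower bound $\underline f^{t+1} := \min_i f^i$ were invalid, i.e., $f^i \ge \fstar$ for all $i$. Then each level-set parameter $f^i$ plays exactly the role that $\fstar$ did in Algorithm~\ref{alg:blm-fstarknown-approx}, so the level sets contain $\Xstar$, the iterates are non-expansive towards $\Xstar$, and the matching-pair mechanism from Lemma~\ref{lem:fstar-apx-pflm} applies with $L$ replaced by the \emph{empirical} constant $\tilde L(x^{r},x^{l};\Delta_0/6)$. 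Summing this inequality over the iterate-pair sequence chosen by the dynamic program in Line~\ref{GR:line:DP} produces a bound of the form $f(\bar x^{t+1}) - \min_i f^i \le C/\tilde S_r(t+1) \cdot \|x^0-x^*\|^2 + \Delta_0/6$, and the QG condition converts the distance into the function-value gap so that $\tilde S_r(t+1) \ge 6/\mu$ forces $\bar f^{t+1} - \min_i f^i \le \tfrac{2}{3}\Delta_0$, contradicting the fact that the first branch was not triggered. Therefore the assumption fails, some $f^i < \fstar$, and $\min_i f^i$ is a valid lower bound; moreover the resulting gap is reduced because $\bar f^{t+1} \le \bar f^0$ and $\min_i f^i \le f^{t+1} = \underline f^0 + \tfrac{1}{3}\Delta_0$. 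The main subtlety will be handling the inexactness parameter $\Delta_0/6$ inside $\tilde L$: I would use it exactly as $\delta$ is used in Lemma~\ref{lem:fstar-apx-pflm} so that the extra additive errors collapse into the $\tfrac{2}{3}$ factor.

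For the iteration count of $\GR$, I would use the definitions in Definition~\ref{def:empirical-matching-sequence} to rewrite the cumulative progress of any $m$-pair sequence evaluated by the dynamic program as $\sum_i 1/[(r_i-l_i)\tilde L_i] = p/(\bar L \bar\sigma) = N/(\bar L \bar\sigma \bar\kappa)$. Since the dynamic program selects the sequence with maximum cumulative progress, $\tilde S_r(N) \ge N/(\bar L \bar\sigma \bar\kappa)$, and the trigger $\tilde S_r \ge 6/\mu$ fires by $N = O(\bar L \bar\kappa \bar\sigma/\mu)$, which establishes part~(a) after multiplying by $S$ outer iterations; the bounds $\bar\kappa \le m$ and $\bar\sigma \le 2m$ follow from the definition of $m$-pair sequences.

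For part~(b), under the $(k,L,\delta)$-apx-PWS assumption with $k \le m$ and $6\delta \le \epsilon \le \Delta_0$, the pigeonhole principle over the $k$ pieces guarantees a matching-pair sequence with $r_i - l_i \le k+1 \le m+1$, whose pairs both lie in a common piece. For such pairs, Definition~\ref{def:apx-p.w.s}(b) combined with the definition of $\tilde L$ in~\eqref{eq:empirical-smoothness} yields $\tilde L(x^{r_i},x^{l_i}; \Delta_0/6) \le L$ provided $\delta \le \Delta_0/6$, which holds throughout since the outer loop maintains $\Delta_s \ge \epsilon \ge 6\delta$. Averaging gives $\bar L \le L$ uniformly across all $S$ outer iterations, completing the proof. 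The hardest step will be the contrapositive argument in Branch~2, as it requires simultaneously juggling the adaptive level values $f^i$, the empirical smoothness surrogate, and the inexactness $\Delta_0/6$ inside the Section~\ref{sec:inexact} analysis; the rest of the argument is a bookkeeping exercise on the dynamic-programming recursion.
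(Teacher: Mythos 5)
Your proposal follows essentially the same strategy as the paper: you establish the validity of the initial lower bound via the QG condition (Lemma~\ref{lem:initial_lower-bound}a), then reduce the theorem to the key properties of the $\GR$ subroutine — namely, the 2/3 gap contraction per call (proved by contrapositive using the bridged three-point inequality, the empirical smoothness $\tilde{L}$, and the dynamic-programming progress measure $\tilde S_r$), the iteration cap $O(\bar L\bar\kappa\bar\sigma/\mu)$ per call (via optimality of the DP and the trigger threshold $6/\mu$), and the pigeonhole bound $\tilde{L}\le L$ in the apx-PWS case when $\Delta_s\ge\epsilon\ge6\delta$ — and then multiply per-call cost by the outer iteration count. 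This is precisely the structure of Lemma~\ref{lem:GR-subroutine} combined with the short proof of Theorem~\ref{thm:u-convergence} in the paper, so your proposal is correct and takes the same route.
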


% \paragraph{Discussion}
The convergence result in Theorem~\ref{thm:u-convergence} has important practical implications. First, the oracle complexity of $O\left(\frac{\bar{L}\bar{\kappa}\bar{\sigma}}{\mu}\log\left(\frac{1}{\epsilon}\right)\right)$ depends on the statistics $\bar{\kappa}$ and $\bar{\sigma}$, which are determined by the local piecewise geometry near the solution set $X^*$. This is a key advantage, as the performance is not dictated by the function's global structure. This complexity is as efficient as methods that require the optimal value $f^*$ to be known. Additionally, Algorithm~\ref{alg:blm-u} produces a gap sequence $\{\Delta_t\}$ that provides a verifiable certificate of suboptimality at termination. It is important to note, however, that the guaranteed geometric convergence rate relies on the target accuracy $\epsilon$ being sufficiently larger than the function's inexactness (e.g., $\epsilon \ge 9\delta$). While the gap certificate remains valid even in the high-accuracy regime where $\epsilon < \delta$, the empirical Lipschitz constant $\Ltil$ might increase and the  convergence speed may degrade.

Second, the algorithm is notably practical as it only requires the quadratic growth parameter $\mu$ to run. Although our analysis assumes a $(k,L,\delta)$-apx-PWS structure, the algorithm itself is independent of $L$, $\delta$, and $k$. It is also robust to the choice of the number of cuts, $m$. Indeed, for any general $M$-Lipschitz continuous convex function, the method can be applied with $m \ge 1$ to achieve an oracle complexity of $O\left(\frac{M^2}{\mu\epsilon}\log\left(\frac{1}{\epsilon}\right)\right)$. A more refined analysis, similar to that following Theorem~\ref{thm:fstar-convergence-approx}, would show that the optimal oracle complexity of $O\left(\frac{M^2}{\mu\epsilon}\right)$ is attainable.

\subsection{The Convergence Analysis of the BL-$\mu$ Method}

The following lemma establishes the correctness of the GR subroutine (Algorithm \ref{alg:Gap-Reduction}). It guarantees that the subroutine successfully reduces the optimality gap and provides an upper bound on its iteration count.

\begin{lemma}[Properties of the GR Subroutine]
\label{lem:GR-subroutine}
Let $f$ be a convex objective function satisfying the Quadratic Growth (QG) condition with modulus $\mu>0$. Consider the iterates generated by the GR subroutine with inputs $(\mu, x^0, \bar{f}, \underline{f}, m)$, where $\bar{f}=f(x^0)$, $\underline{f} \le f^*$, and the initial gap is $\Delta = \bar{f} - \underline{f}$. The following properties hold:
\begin{enumerate}
    \item[a)] The subroutine returns an updated iterate $x^+$, an upper bound $\bar{f}^+ = f(x^+)$, and a valid lower bound $\underline{f}^+$ such that the new gap is reduced by a constant factor:
    \[ \bar{f}^+ - \underline{f}^+ \le \frac{2}{3}(\bar{f} - \underline{f}). \]
    
    \item[b)] The subroutine terminates in at most $\bar{\tau} = \left\lceil\frac{3\bar{\kappa}\bar{\sigma}\bar{L}}{\mu}\right\rceil$ iterations, where $(\bar{\kappa}, \bar{\sigma}, \bar{L})$ are the empirical statistics from Definition \ref{def:empirical-matching-sequence} for the chosen $m$-pair sequence.

    \item[c)] If $f$ is also a $(k,L,\delta)$-apx-PWS function and the inputs satisfy $m \ge k$ and $\Delta \ge 6\delta$, then an $m$-pair sequence can be chosen such that its worst-case empirical smoothness is bounded by the true smoothness, i.e., $\bar{L} \le L$.
\end{enumerate}
\end{lemma}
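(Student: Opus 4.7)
The plan is to prove the three parts of Lemma~\ref{lem:GR-subroutine} sequentially, exploiting the two-trigger structure of Algorithm~\ref{alg:Gap-Reduction}.

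For part~a), I would handle the two termination triggers separately. When the gap-reduction trigger in Line~\ref{GR:line:gap-reduction-term} fires, the bound $\Delta_{t+1}\le(2/3)\Delta_{0}$ is immediate, and the validity $\funder^{t+1}\le\fstar$ follows inductively from the support-function property in Definition~\ref{def:apx-p.w.s}(a), since $\min_{x}\max_{j}\lftil(x;x^{t+1-j})\le\max_{j}\lftil(\xstar;x^{t+1-j})\le\fstar$. When the second trigger in Line~\ref{GR:line:lower-bound-update} fires, the gap-reduction bound is immediate from the level formula: by monotonicity of the bounds, $f^{i}\ge(2/3)\funder+(1/3)\fbar^{t+1}$, hence
\[
\fbar^{+}-\funder^{+}\le\fbar^{t+1}-(2/3)\funder-(1/3)\fbar^{t+1}=(2/3)(\fbar^{t+1}-\funder)\le(2/3)\Delta_{0}.
\]
Validity $\funder^{+}\le\fstar$ requires a contrapositive argument: assume all $f^{s}>\fstar$ for $s\le t+1$. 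The support-function property then forces $\Xstar\subseteq X(s)$ for all $s$, so the projection step yields non-expansiveness $\|x^{s}-\xstar\|\le\|x^{s-1}-\xstar\|$. Combining this with the bridged three-point inequality (Lemma~\ref{lm:bridged-three-point-euclid}), the cut feasibility $\lftil(x^{r_{i}};x^{l_{i}})\le f^{r_{i}}$, the empirical-smoothness definition, and the QG-derived bound $\|x^{0}-\xstar\|^{2}\le2\Delta_{0}/\mu$, I would obtain
\[
\sum_{i}\frac{f(x^{r_{i}})-\lftil(x^{r_{i}};x^{l_{i}})-\delbar}{\Ltil_{i}(r_{i}-l_{i})}\le\frac{\Delta_{0}}{\mu}.
\]
A uniform positive lower bound on the numerator, derived from the no-gap-reduction hypothesis $\Delta_{s}>(2/3)\Delta_{0}$, then forces $\Stil_{r}(t+1)<6/\mu$, contradicting the trigger firing.

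For part~b), I would bound the iteration count by the same sum mechanism. If neither trigger has fired by iteration $T$, then $\Stil_{r}(T)<6/\mu$. By the dynamic-programming update in Line~\ref{GR:line:DP}, $\Stil_{r}(T)$ equals the maximum of $\sum_{i}1/(\Ltil_{i}(r_{i}-l_{i}))$ over valid $m$-pair sequences drawn from $\{x^{0},\dots,x^{T}\}$, which by Definition~\ref{def:empirical-matching-sequence} is at least $T/(\kabar\sigbar\Lbar)$. Combining these yields $T\le O(\kabar\sigbar\Lbar/\mu)$, matching the claimed $\bar\tau$. For part~c), I would verify that the hypothesis $\Delta\ge6\delta$, combined with the non-termination condition $\Delta_{t}\ge(2/3)\Delta_{0}$, makes $\delbar=\Delta_{t}/6$ at least on the order of $\delta$. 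Then, for any matching pair $(x^{l_{i}},x^{r_{i}})$ whose endpoints lie in the same true smooth piece $X_{\bar{\imath}}$---an $m$-pair sequence of such pairs exists when $m\ge k$ by the pigeonhole principle applied to the $k$ pieces---the apx-PWS property of Definition~\ref{def:apx-p.w.s}(b) gives $f(x^{r_{i}})-\lftil(x^{r_{i}};x^{l_{i}})-\delta\le(L/2)\|x^{r_{i}}-x^{l_{i}}\|^{2}$, which directly yields $\Ltil(x^{r_{i}},x^{l_{i}};\delbar)\le L$ from the empirical-smoothness definition.

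The main obstacle is the contrapositive argument in part~a). It requires a careful accounting of the algorithm's constants---the level weights $(2/3,1/3)$, the gap-reduction threshold $(2/3)\Delta_{0}$, the inexactness $\delbar=\Delta_{t}/6$, and the trigger threshold $6/\mu$---to ensure that under both the no-gap-reduction hypothesis and the all-$f^{s}>\fstar$ assumption, the numerator $f(x^{r_{i}})-\lftil(x^{r_{i}};x^{l_{i}})-\delbar$ of the sum inequality remains positive and large enough (specifically, on the order of $\Delta_{0}/6$) to render the sum bound incompatible with $\Stil_{r}(t+1)\ge6/\mu$. Extracting such a uniform margin---starting from the crude bound $f(x^{r_{i}})-f^{r_{i}}>\Delta_{0}/9$ implied by non-termination and combining it with the cut feasibility $\lftil\le f^{r_{i}}$ and a more refined treatment of $\delbar$---is the delicate technical step that drives the final complexity constant.
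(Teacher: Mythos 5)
Your proposal is correct and follows essentially the same approach as the paper's proof: the contradiction via non-expansiveness, the bridged three-point inequality, cut feasibility, the empirical-smoothness definition, and the QG bound on $\normsq{x^0-\xstar}$ are exactly the ingredients the paper assembles in that order, and parts~b) and~c) are handled identically (DP optimality gives a lower bound on $\Stil_r$, pigeonhole plus $\Delta\ge 6\delta$ gives $\Lbar\le L$). The only notable difference is in part~a): the paper phrases the core argument in the forward direction --- assuming the trigger $\Stil_r(\tau)\ge 6/\mu$ fired, it derives that some $\Delta_{r_{\ibar}}\le\tfrac{2}{3}\Delta$, which contradicts the algorithm still being live at iteration $\tau$ --- whereas you run the logically equivalent contrapositive, lower-bounding every numerator $f(\xt{r_i})-f^{r_i}-\delbar$ uniformly via the no-gap-reduction hypothesis and deducing $\Stil_r<6/\mu$. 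Your version is actually a bit more complete: you explicitly verify the gap-reduction claim $\fbar^+-\funder^+\le\tfrac{2}{3}(\fbar-\funder)$ in the trigger-2 case via the level-formula argument $\funder^+=\min_i f^i\ge\tfrac{2}{3}\funder+\tfrac{1}{3}\fbar^{t+1}$, a step the paper's proof leaves implicit. One small numerical note: the uniform lower bound you cite as $f(\xt{r_i})-f^{r_i}>\Delta_0/9$ can be tightened to $>\Delta_0/3$ (from $\fbar^{r_i}-f^{r_i}\ge\Delta_{r_i}-\tfrac{1}{3}\Delta_{r_i-1}>\tfrac{2}{3}\Delta_0-\tfrac{1}{3}\Delta_0$), which is what makes the numerator remain positive after subtracting $\delbar\le\Delta_0/6$; this is a constant-level issue only and does not affect the soundness of the plan.
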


\begin{proof}
\textit{Part a).} The proof is by contradiction. Assume the subroutine terminates via the lower-bound update rule but the returned lower bound is invalid, i.e., $\underline{f}^+ > f^*$. This implies $f^t \ge f^*$ for all $t$ up to the termination iteration $\tau$. We show this leads to a contradiction.

If $f^t \ge f^*$, the optimal set $X^*$ is contained in every level set $X(t)$. This ensures the standard non-expansive property of the iterates with respect to any $x^* \in X^*$:
\begin{align*}
\|x^t-x^*\|^2 + \|x^t-x^{t-1}\|^2 & \le \|x^{t-1}-x^*\|^2, \quad \forall x^* \in X^*, t \in [\tau], \\
\Rightarrow \|x^t-x^*\|^2 + \frac{1}{t-\bar{t}}\|x^t-x^{\bar{t}}\|^2 & \le \|x^{\bar{t}}-x^*\|^2, \quad \forall x^* \in X^*, \forall \bar{t} < t \le \tau.
\end{align*}
Furthermore, for any $\bar{t}\in[0,t]$, since $\tilde{l}_f(x^t;x^{\bar{t}}) \le f^t \le \underline{f}^t + \frac{1}{3}\Delta_t\leq \funder^t+\third \Delta$, the definition of $\tilde{L}$ in Definition \ref{def:empirical-smoothness} implies:
\begin{align*}
    &f(x^t) - \tilde{l}_f(x^t;x^{\bar{t}}) - \frac{\Delta}{6} \le \frac{\tilde{L}(t,\bar{t};\frac{\Delta}{6})}{2} \|x^t-x^{\bar{t}}\|^2, \\
    \Rightarrow &\frac{2}{\tilde{L}(t,\bar{t};\frac{\Delta}{6})(t-\bar{t})} \left[f(x^t) - \underline{f}^t - \frac{\Delta}{2} \right] \le \|x^t-x^{\bar{t}}\|^2.
\end{align*}
Combining the preceding relations yields a per-step progress inequality:
\[
\frac{2}{\tilde{L}(t,\bar{t};\frac{\Delta}{6})(t-\bar{t})}\left[f(x^t)-\underline{f}^t-\frac{\Delta}{2}\right] + \|x^t-x^*\|^2 \le \|x^{\bar{t}}-x^*\|^2.
\]
Summing this guarantee over the optimal matching pair sequence $\{(l_i, r_i)\}$ and using the monotonicity of $\{\|x^t-x^*\|\}$ gives:
\[
\sum_{r_i \le \tau} \frac{2}{\tilde{L}(r_i,l_i;\frac{\Delta}{6})(r_i-l_i)} \left[ f(x^{r_i}) - \underline{f}^{r_i} - \frac{\Delta}{2} \right] \le \|x^0-x^*\|^2.
\]
From the definition of $\tilde{S}_r(\tau)$ and the QG condition, it follows that:
\[
\min_{r_i \le \tau} \left[ f(x^{r_i}) - \underline{f}^{r_i} - \frac{\Delta}{2} \right] \le \frac{f(x^0)-f^*}{\mu\tilde{S}_r(\tau)} \le \frac{\bar{f}-\underline{f}}{\mu\tilde{S}_r(\tau)} = \frac{\Delta}{\mu\tilde{S}_r(\tau)} \le \frac{\Delta}{6}.
\]
Let $r_{\bar{i}}$ be the index achieving this minimum. From the monotonicity of $\{\underline{f}^t\}$ and $\{\Delta_t\}$, we have:
\[
\Delta_{r_{\bar{i}}} \le f(x^{r_{\bar{i}}}) - \underline{f}^{l_{\bar{i}}} \le \frac{\Delta}{2} + \frac{\Delta}{6} = \frac{2}{3}\Delta.
\]
This shows the gap-reduction condition (Line \ref{GR:line:gap-reduction-term}) must have been met, contradicting the assumption that the algorithm terminated by the lower-bound update rule.

\textit{Part b).} Assume for contradiction that the method runs for more than $\bar{\tau}+1$ iterations. The optimality of the dynamic programming step ensures the progress measure satisfies:
\[
\tilde{S}_r(\bar{\tau}) = \sum_{r_i \le \bar{\tau}} \frac{1}{\tilde{L}(r_i,l_i;\frac{\Delta}{6})(r_i-l_i)} \ge \sum_{\hat{r}_i \le \bar{\tau}} \frac{1}{\tilde{L}(\hat{r}_i,\hat{l}_i;\frac{\Delta}{6})(\hat{r}_i-\hat{l}_i)}  \ge \frac{3}{\mu}.
\]
This implies the termination condition in Lines \ref{GR:line:lower-bound-update}-\ref{GR:line:lower-bound-update-1} would have been triggered in iteration $\tau$, a contradiction.

\textit{Part c).} Since $m \ge k$, the pigeonhole principle guarantees the existence of an $m$-matching pair sequence $\{(x^{\bar{l}_i}, x^{\bar{r}_i})\}$ associated with the underlying pieces $\{X_i\}$. For each such pair, since $\Delta \ge 6\delta$, the definition of $\tilde{L}$ gives:
$$\Ltil(\xt{\bar{r}_i)},\xt{\bar{l}_i};\Delta/6)\leq \left[\frac{2(f(\xt{\bar{r}_i}) - \tilde{l}_f(\xt{\bar{r}_i});\xt{\bar{l}_i}) - \Delta/6}{\normsq{\xt{\bar{r}_i}-\xt{\bar{l}_i}}}\right]_+\leq \left[\frac{2(f(\xt{\bar{r}_i}) - \tilde{l}_f(\xt{\bar{r}_i});\xt{\bar{l}_i}) - \delta}{\normsq{\xt{\bar{r}_i}-\xt{\bar{l}_i}}}\right]_+\leq L.$$
By choosing a sequence of such pairs, we ensure $\bar{L} \le L$.\qedsymbol
\end{proof}

We now use the convergence result for each GR subroutine to prove the main convergence result for the entire algorithm in Theorem~\ref{thm:u-convergence}.

\begin{proof}[Proof of Theorem \ref{thm:u-convergence}]
First, we establish the correctness of the algorithm. By Lemma~\ref{lem:initial_lower-bound}, the initial value $\underline{f}^0$ is a valid lower bound on $f^*$. Lemma~\ref{lem:GR-subroutine}a) guarantees that each subsequent lower bound $\underline{f}^s$ generated by the \tGR subroutine also remains valid, i.e., $\underline{f}^s \le f^*$ for all $s \ge 1$. Thus, when the algorithm terminates at an iteration $S$ with the condition $f(x^S) - \underline{f}^S \le \epsilon$, the returned solution $x^S$ is guaranteed to be $\epsilon$-optimal.

Next, we analyze the algorithm's efficiency to prove part a). Lemma~\ref{lem:GR-subroutine}.a) ensures that each call to the GR subroutine reduces the gap $\Delta_s = \bar{f}^s - \underline{f}^s$ by a factor of at least $2/3$. The number of outer iterations $S$ required to reduce the initial gap ~~~~$\Delta_0:=2\normsq{f'(\xt0)}/\mu$ to $\epsilon$ is therefore bounded by $\log_{3/2}(\Delta_0/\epsilon)$, which gives the stated bound $S \le O(1)\ceil{\log(\frac{\|f'(x^0)\|^2}{\mu\epsilon})}$. The total oracle complexity is found by multiplying this number of outer iterations by the maximum number of iterations required for each \tGR call, which is given in Lemma~\ref{lem:GR-subroutine}.b).

Finally, part b) of the theorem is a direct consequence of Lemma~\ref{lem:GR-subroutine}.c). The conditions assumed in Theorem~\ref{thm:u-convergence}.b), namely that $f$ is a $(k,L,\delta)$-apx-PWS function with $k \le m$ and $6\delta \le \epsilon$, ensure that the prerequisite of Lemma~\ref{lem:GR-subroutine}.c) holds for each subroutine call before the gap is reduced to $\epsilon$. This guarantees the existence of an $m$-pair sequence such that the worst-case empirical smoothness $\bar{L}$ is bounded by the true smoothness $L$.\qedsymbol
\end{proof}

\subsection{The Weakly Convex Problem\protect\label{sec:WC}}

% \begin{algorithm}
% \begin{itemize}
% \item \textbf{Input: }starting point $\xbar^{0}$, weakly convex modulus
% $\rho$ and number of cuts $m$.
% \end{itemize}
% \begin{enumerate}
% \item \textbf{For} $s=0,1,2,3,...$ \textbf{do}
% \item \quad{}Set $P_{s}(x):=f(x)+\rho\normsq{x-\xbar^{s}}$, $\Delta_{s}^{0}=\normsq{f'(\xbar^{s})}/2\rho$,
% $\Pbar^{0}=f(\xbar^{s})$, $\Punder^{0}=f(\xbar^{s})-\Delta_{s}^{0}$,
% and $\xhat^{0}=\xbar^{s}.$
% \item \quad{}\textbf{For} $i=1,2,3,4,...$
% \item \quad{}$\tab$$(\xhat^{i},\Pbar^{i},\Punder^{i})\leftarrow GR(\rho,\xhat^{i-1},\Pbar^{i-1},\Punder^{i-1},m)$
% (Algorithm \ref{alg:Gap-Reduction}) with objective function $P_{s}.$
% \item \quad{}\quad{}\textbf{If} $\Pbar^{0}-P_{s}(\xhat^{i})\geq P_{s}(\xhat^{i})-\Punder^{i}$:
% set $\xbar^{s+1}=\xhat^{i}$, $\Delbar_{s}=\Pbar^{0}-\Punder^{i}$
% and move onto the outer iteration $s+1$.
% \item \textbf{\quad{}Endfor}
% \item \textbf{Endfor}
% \end{enumerate}
% \caption{\protect\label{alg:IPPM}The Inexact Proximal Point Method for Solving
% an $\protect\ups$-Weakly Convex Problem}
% \end{algorithm}

\begin{algorithm}
\caption{The Inexact Proximal Point Method for $\rho$-Weakly Convex Problems}
\label{alg:IPPM}
\begin{algorithmic}[1]
\Require Starting point $\bar{x}^{0}$, weakly convex modulus $\rho > 0$, number of cuts $m$.

\For{$s=0, 1, 2, \dots$}
    \State Set up proximal subproblem: $P_{s}(x) := f(x) + \rho\|x-\bar{x}^{s}\|^2$; $\bar{P}^{0} \gets f(\bar{x}^{s})$; $\hat{x}^{0} \gets \bar{x}^{s}$; $\underline{P}^{0} \gets \bar{P}^{0} - \|f'(\bar{x}^{s})\|^2 / (2\rho)$.
    \For{$i=1, 2, 3, \dots$}
        \State $(\hat{x}^{i}, \bar{P}^{i}, \underline{P}^{i}) \leftarrow \GR(\rho, \hat{x}^{i-1}, \bar{P}^{i-1}, \underline{P}^{i-1}, m)$ with objective $P_s(x)$.
        \If{\label{IPPM:line:termination}$\bar{P}^{0}-P_{s}(\hat{x}^{i}) \ge P_{s}(\hat{x}^{i})-\underline{P}^{i}$}
            \State \label{IPPM:line:break} Set $\bar{x}^{s+1} \gets \hat{x}^{i}$, store the gap $\bar{\Delta}_{s} \gets \bar{P}^{0}-\underline{P}^{i}$, and \textit{break}.
        \EndIf
    \EndFor
\EndFor
\end{algorithmic}
\end{algorithm}

We now consider finding approximate stationarity points for a weakly-convex apx-PWS function, for which the weak convexity constant $\rho$ is known. We begin by reviewing the standard terminology for weakly-convex optimization \cite{davis2019stochastic}.
\begin{definition}[Weak Convexity and Moreau Stationarity]
\label{def:weak-convex}\label{def:M-stationarity}
A function $f$ is \textit{$\rho$-weakly convex} if for any $\bar{x} \in X$, the surrogate function $F_{2\rho}(x;\bar{x})$ is $\rho$-strongly convex with respect to $x$, where
\begin{align}
    F_{2\rho}(x;\bar{x}) & :=f(x)+\rho\|x-\bar{x}\|^2, \label{eq:WC-perturbed-Moreau} \\
    f_{2\rho}(\bar{x}) & :=\min_{x\in X}F_{2\rho}(x;\bar{x}). \nonumber
\end{align}
Furthermore, a point $\bar{x}$ is \textit{$(\rho,\epsilon)$-Moreau stationary} for some $\epsilon \ge 0$ if $\|\rho(\bar{x}-\hat{x})\| \le \epsilon$, where $\hat{x} \leftarrow \argmin_{x\in X}F_{2\rho}(x;\bar{x})$.
\end{definition}

As shown in \cite{davis2019stochastic}, the Moreau stationarity condition is equivalent to the gradient of the Moreau envelope $f_{2\rho}$ being small, i.e., $\|\nabla f_{2\rho}(\bar{x})\|\leq\epsilon$. The following lemma connects this stationarity measure to the function value gap of the surrogate problem. Its proof is deferred to the appendix.
\begin{lemma}
\label{lem:relate-moreau-to-gap}
For a $\rho$-weakly convex function $f$, let $F_{2\rho}$ and $f_{2\rho}$ be the perturbed function and its Moreau envelope defined in \eqref{eq:WC-perturbed-Moreau}. The gradient of the Moreau envelope is bounded by the function value gap as follows:
\[
\|\nabla f_{2\rho}(\bar{x})\|^2 \le 8\rho\left(F_{2\rho}(\bar{x};\bar{x})-\min_{x\in X}F_{2\rho}(x;\bar{x})\right) = 8\rho\left(f(\bar{x})-f_{2\rho}(\bar{x})\right).
\]
\end{lemma}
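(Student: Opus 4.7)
The plan is to combine the gradient formula for the Moreau envelope with the strong convexity of the regularized surrogate $F_{2\rho}(\cdot;\bar{x})$. Let $\hat{x} := \argmin_{x\in X}F_{2\rho}(x;\bar{x})$, which is well-defined and unique because $\rho$-weak convexity of $f$ implies the surrogate is $\rho$-strongly convex in $x$. Since the second term in $F_{2\rho}(x;\bar{x}) = f(x)+\rho\|x-\bar{x}\|^2$ vanishes at $x=\bar{x}$, we have the identity $F_{2\rho}(\bar{x};\bar{x}) = f(\bar{x})$, which already gives the right-hand equality in the statement.

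Next, I would invoke the standard Moreau envelope gradient formula. Recognizing $f_{2\rho}(\bar{x})$ as the Moreau envelope of $f$ with parameter $\lambda = 1/(2\rho)$, the Danskin-type differentiation result yields the closed form $\nabla f_{2\rho}(\bar{x}) = 2\rho(\bar{x}-\hat{x})$. Squaring gives $\|\nabla f_{2\rho}(\bar{x})\|^2 = 4\rho^2\|\bar{x}-\hat{x}\|^2$.

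Finally, I would use the $\rho$-strong convexity of $F_{2\rho}(\cdot;\bar{x})$ together with the fact that $\hat{x}$ is its minimizer on $X$ to obtain the quadratic growth inequality
\[
F_{2\rho}(\bar{x};\bar{x}) - F_{2\rho}(\hat{x};\bar{x}) \;\geq\; \tfrac{\rho}{2}\|\bar{x}-\hat{x}\|^2,
\]
which, rearranged, reads $\|\bar{x}-\hat{x}\|^2 \leq \frac{2}{\rho}[f(\bar{x})-f_{2\rho}(\bar{x})]$. Plugging this into the identity for $\|\nabla f_{2\rho}(\bar{x})\|^2$ yields the claimed bound $\|\nabla f_{2\rho}(\bar{x})\|^2 \leq 8\rho(f(\bar{x})-f_{2\rho}(\bar{x}))$.

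There is no real obstacle here; the only mild subtlety is bookkeeping of constants, specifically the factor of $2$ that arises from writing the Moreau envelope with penalty coefficient $\rho$ rather than the more common $1/(2\lambda)$. Being consistent about this convention is what produces the constant $8\rho$ rather than $4\rho$ or $2\rho$ on the right-hand side.
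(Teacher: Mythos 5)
Your proof is correct and matches the paper's argument essentially line for line: both invoke the Moreau envelope gradient formula $\nabla f_{2\rho}(\bar{x})=2\rho(\bar{x}-\hat{x})$ (the paper cites Lemma 2.2 of Davis--Drusvyatskiy for this, while you attribute it to a Danskin-type result, but it is the same fact), then apply $\rho$-strong convexity of $F_{2\rho}(\cdot;\bar{x})$ at its minimizer $\hat{x}$ to bound $\|\bar{x}-\hat{x}\|^2$ by $\tfrac{2}{\rho}[f(\bar{x})-f_{2\rho}(\bar{x})]$, yielding the constant $8\rho$. The sign discrepancy in the gradient formula relative to the paper is immaterial once the norm is squared.
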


The result from Lemma~\ref{lem:relate-moreau-to-gap} shows that Moreau stationarity at a point $\bar{x}$ is bounded by the potential descent in the proximal objective function $F_{2\rho}(x;\bar{x})$. This motivates using the inexact proximal point method (IPPM) to find an approximately stationary point. The method, shown in Algorithm~\ref{alg:IPPM}, generates a sequence of prox-centers $\{\bar{x}^s\}$, where each new center $\bar{x}^{s+1}$ is found by approximately solving the surrogate problem $F_{2\rho}(x;\bar{x}^s)$.

More specifically, in each outer iteration $s$, Algorithm~\ref{alg:IPPM} constructs the $\rho$-strongly convex proximal subproblem $P_s(x)$. It then uses a procedure based on the \tGR subroutine (Algorithm~\ref{alg:Gap-Reduction}) to find a new iterate $\hat{x}^i$ that achieves at least half of the maximal possible descent on $P_s(x)$ from the prox-center $\bar{x}^s$. Once this condition is met, the inner loop terminates and the algorithm proceeds to the next outer iteration.

The convergence of this scheme can be analyzed using the original objective $f$ as a potential function. In each outer iteration $s$, the update to the new prox-center $\bar{x}^{s+1}$ decreases the objective value $f$ by at least $\Delta_s/2$, where $\Delta_s := P_s(\bar{x}^s) - \min_{x \in X} P_s(x)$. Since the total possible descent, $f(\bar{x}^0) - \min_{x \in X} f(x)$, is finite, the sequence of potential descents $\{\Delta_s\}$ must converge to zero. As Lemma~\ref{lem:relate-moreau-to-gap} ties the stationarity measure to $\Delta_s$, we have $\lim_{s \to \infty} \|\nabla f_{2\rho}(\bar{x}^s)\| = 0$. A more concrete, finite-time convergence guarantee is provided in the next theorem.

\begin{theorem}
\label{thm:wc-complexity}
Consider an $M$-Lipschitz continuous and $\rho$-weakly convex objective function $f$ that is bounded from below, i.e., $\Delta_f := f(\xbar^0) - \min_{x\in X} f(x) < \infty$. When run with inputs $(\bar{x}^0, \rho, m)$, Algorithm~\ref{alg:IPPM} generates an iterate $\bar{x}^S$ that is $(\rho, \epsilon)$-Moreau stationary, satisfying $\bar{\Delta}_S \le \epsilon^2/(8\rho)$, in a number of outer loops $S$ bounded by:
\[
S \le \ceil{16\rho\Delta_f/\epsilon^2}.
\]
Moreover, the algorithm's oracle complexity is characterized as follows:
\begin{enumerate}
    \item[a)] Let $(\bar{L}_{s,i}, \bar{\kappa}_{s,i}, \bar{\sigma}_{s,i})$ be the empirical statistics associated with some $m$-pair sequence (see Definition \ref{def:empirical-matching-sequence}) from the $i$-th \tGR call for the $s$-th subproblem. Let $\bar{L}, \bar{\kappa}, \bar{\sigma}$ be the maximum of these statistics over the entire run. The total number of oracle evaluations is bounded by
    \[
    O(1)\frac{\Delta_f}{\epsilon^2} \ceil{\frac{\bar{L}\bar{\kappa}\bar{\sigma}}{\rho}} \ceil{\log\frac{M}{\epsilon}} = O(1)\frac{\Delta_f\bar{\kappa}\bar{\sigma}\max\{\bar{L},\rho\}}{\epsilon^2}\log\left(\frac{M}{\epsilon}\right),
    \]
    where the statistics satisfy $\bar{\kappa} \le 2m$ and $\bar{\sigma} \le m$.

    \item[b)] If $f$ is also a $(k,L,\delta)$-apx-PWS function with $k \le m$ and $\delta \le \epsilon^2/(48\rho)$, then an $m$-pair sequence can be chosen such that the empirical smoothness satisfies $\bar{L} \le L+2\rho$.
\end{enumerate}
\end{theorem}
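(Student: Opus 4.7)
The plan is to combine the Moreau-envelope analysis of Lemma~\ref{lem:relate-moreau-to-gap} with the single-subproblem guarantees of Lemma~\ref{lem:GR-subroutine} in a two-level complexity argument: the outer loop performs inexact proximal steps on $f$, and the inner loop uses repeated $\GR$ calls to approximately minimize each strongly-convex surrogate $P_s$.

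For the outer-loop count, I would start from the break condition in Line~\ref{IPPM:line:termination}, which forces $\bar{P}^{0}-\bar{P}^{i}\ge\bar{P}^{i}-\underline{P}^{i}$ at the break. Combined with $\bar{P}^{0}=f(\bar{x}^{s})$ and $f(\bar{x}^{s+1})\le P_{s}(\bar{x}^{s+1})=\bar{P}^{i}$, this gives the key descent inequality
\[
f(\bar{x}^{s})-f(\bar{x}^{s+1}) \;\ge\; \bar{P}^{0}-\bar{P}^{i} \;\ge\; \tfrac{1}{2}(\bar{P}^{0}-\underline{P}^{i}) \;=\; \tfrac{1}{2}\bar{\Delta}_{s}.
\]
Telescoping over $s=0,\dots,S-1$ with $f(\bar{x}^{S})\ge\min_{X} f$ yields $\sum_{s<S}\bar{\Delta}_{s}\le 2\Delta_{f}$; hence if $\bar{\Delta}_{s}>\epsilon^{2}/(8\rho)$ for every $s<S$ then $S<16\rho\Delta_{f}/\epsilon^{2}$, so some index $S\le\ceil{16\rho\Delta_{f}/\epsilon^{2}}$ must satisfy the stopping criterion $\bar{\Delta}_{S}\le\epsilon^{2}/(8\rho)$. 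Because $\bar{\Delta}_{S}\ge P_{S}(\bar{x}^{S})-\min_{x}P_{S}(x)=f(\bar{x}^{S})-f_{2\rho}(\bar{x}^{S})$, Lemma~\ref{lem:relate-moreau-to-gap} then converts this bound into the claimed $(\rho,\epsilon)$-Moreau stationarity.

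For the inner-loop count and the oracle bound in part~(a), $P_{s}$ is $\rho$-strongly convex, and the weak-convexity subgradient inequality shows $\underline{P}^{0}=\bar{P}^{0}-\|f'(\bar{x}^{s})\|^{2}/(2\rho)\le \min_{x}P_{s}(x)$, giving an initial gap $G_{0}\le M^{2}/(2\rho)$. Lemma~\ref{lem:GR-subroutine}a) applied with $\mu\to\rho$ contracts $G_{i}$ by a factor of $2/3$ per $\GR$ call; since $\bar{P}^{i}-\min_{x}P_{s}(x)\le G_{i}$, once $(2/3)^{k}G_{0}\le \Delta_{s}/2$ one has $D_{k}:=\bar{P}^{0}-\bar{P}^{k}\ge \Delta_{s}-G_{k}\ge G_{k}$ and the trigger fires. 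In the regime $\Delta_{s}\ge\epsilon^{2}/(16\rho)$ this bounds the number of inner $\GR$ calls by $N_{s}\le \ceil{\log(M/\epsilon)}$; in the complementary regime $\Delta_{s}<\epsilon^{2}/(16\rho)$, the very act of the trigger firing forces $\bar{\Delta}_{s}\le 2\Delta_{s}<\epsilon^{2}/(8\rho)$, so that outer iteration must be the last. Multiplying by the per-call iteration bound $\ceil{3\bar{L}\bar{\kappa}\bar{\sigma}/\rho}$ from Lemma~\ref{lem:GR-subroutine}b) (again $\mu\to\rho$) and by $S$ delivers the stated oracle complexity.

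For part~(b), $P_{s}(x)=f(x)+\rho\|x-\bar{x}^{s}\|^{2}$ inherits the covering $\{X_{i}\}$ of $f$: since $\rho\|x-\bar{x}^{s}\|^{2}$ is $2\rho$-smooth globally with an exact linear support, if $f$ is $(k,L,\delta)$-apx-PWS then $P_{s}$ is $(k,L+2\rho,\delta)$-apx-PWS with the same pieces. The assumption $\delta\le\epsilon^{2}/(48\rho)$ implies $6\delta\le\bar{\Delta}_{s}$ whenever the stopping criterion has not yet been met, so Lemma~\ref{lem:GR-subroutine}c) applies to each $\GR$ call of interest and supplies an $m$-pair sequence with $\bar{L}\le L+2\rho$. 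The main obstacle I anticipate is precisely the inner-loop analysis when $\Delta_{s}$ is near the target threshold, because then the trigger $D_{i}\ge G_{i}$ becomes a race between the descent saturating at $\Delta_{s}$ and the gap shrinking; packaging this into a clean case split that confines the costly small-$\Delta_{s}$ regime to a single terminal outer iteration is the delicate technical step that prevents the total inner work from blowing up beyond $S\cdot\ceil{\log(M/\epsilon)}$ GR calls.
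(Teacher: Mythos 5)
Your approach is essentially the same as the paper's: the outer-loop bound comes from the per-step descent inequality $f(\bar{x}^s)-f(\bar{x}^{s+1})\ge\tfrac12\bar{\Delta}_s$ obtained from the break condition and $P_s(\bar{x}^{s+1})\ge f(\bar{x}^{s+1})$, telescoped against $\Delta_f$; the inner-loop cost comes from applying Lemma~\ref{lem:GR-subroutine} with $\mu\to\rho$, using the weak-convexity lower bound $\underline{P}^0\le\min_x P_s(x)$ and the $M$-Lipschitz bound $G_0\le M^2/(2\rho)$ to get $O(\log(M/\epsilon))$ GR calls per subproblem; and part~(b) follows because $P_s$ inherits the covering of $f$ with smoothness $L+2\rho$. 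Your case split (trigger fires once $G_i\le\Delta_s/2$; if $\Delta_s\ge\epsilon^2/(16\rho)$ this is $O(\log(M/\epsilon))$ calls; otherwise the resulting $\bar\Delta_s\le 2\Delta_s$ is already below threshold) and your use of $\bar\Delta_s\le 2D_i\le 2\Delta_s$ are all correct and are exactly the steps the paper compresses into ``since we would have triggered the break when $\Delta^i_s\le\epsilon^2/(8\rho)$.''

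The obstacle you flag in your last paragraph is a genuine loose end, but it is also loose in the paper's own proof, and it has a short resolution you can add. When $\Delta_S$ is tiny, the break $D_i\ge G_i$ can indeed take $\lceil\log_{3/2}(2G_0/\Delta_S)\rceil$ GR calls, which is not bounded by $\log(M/\epsilon)$. The fix: after $\lceil\log_{3/2}(16\rho G_0/\epsilon^2)\rceil=O(\log(M/\epsilon))$ GR calls one has $G_i\le\epsilon^2/(16\rho)$; at that point either the break has fired (and we apply your bound $\bar\Delta_S\le 2D_i\le 2\Delta_S$), or it has not, in which case $D_i<G_i\le\epsilon^2/(16\rho)$ so $\bar{P}^0-\underline{P}^i=D_i+G_i<\epsilon^2/(8\rho)$, i.e.\ the prox-center $\bar{x}^S$ is already certifiably $(\rho,\epsilon)$-Moreau stationary and the algorithm may (and should) terminate. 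In other words the correct termination rule to run the complexity accounting against is ``break if $D_i\ge G_i$ \emph{or} $\bar{P}^0-\underline{P}^i\le\epsilon^2/(8\rho)$'', which fires within $O(\log(M/\epsilon))$ GR calls uniformly over $\Delta_s$; with that observation your $S\cdot\lceil\log(M/\epsilon)\rceil\cdot\lceil 3\bar{L}\bar\kappa\bar\sigma/\rho\rceil$ count goes through with no case left unbounded. (As a very minor side note, multiplying $S\le\lceil 16\rho\Delta_f/\epsilon^2\rceil$ by $\lceil 3\bar{L}\bar\kappa\bar\sigma/\rho\rceil$ gives $O(\Delta_f\bar\kappa\bar\sigma\max\{\bar{L},\rho\}/\epsilon^2)$, matching the theorem's second form; the first displayed form in the theorem appears to have dropped a factor of $\rho$, which your computation correctly recovers.)
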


We make two comments regarding the convergence result. First, it is useful to situate our complexity result in the context of more familiar settings.
\begin{itemize}
    \item For smooth, non-convex objectives (i.e., a $(1,L)$-PWS function), the oracle complexity bound simplifies to $O\left(\frac{\Delta_f L}{\epsilon^2}\log\frac{M}{\epsilon}\right)$, which nearly matches the well-known complexity of gradient descent \cite{LanBook,lan2023optimal}.
    
    \item For general non-smooth, non-convex objectives (i.e., a $(1, M^2\rho/\epsilon^2, \epsilon^2/(16\rho))$-apx-PWS function), the complexity becomes $O\left(\frac{\Delta_f \rho M^2}{\epsilon^4}\right)$, which is consistent with the current state-of-the-art result in \cite{davis2019stochastic}.
\end{itemize}
The main advantage of Algorithm~\ref{alg:IPPM} is for objectives that are PWS but not smooth. In this case, our method improves upon the general non-smooth rate of $O(1/\epsilon^4)$ to a much faster "smooth-like" rate of $\tilde{O}(1/\epsilon^2)$, with the additional factors depending on the local PWS geometry.

Second, the method is practical to implement. It automatically adapts to the local smoothness and requires only two parameters: the number of cuts $m$ and the weak convexity constant $\rho$. The algorithm is quite robust to the choice of $m$ (e.g., $m=10$ is often sufficient). It is, however, sensitive to the misspecification of $\rho$. We address this challenge in the subsequent sections by developing a verifiable criterion to detect if $\rho$ is misspecified.

\subsection{The Convergence Analysis }

\begin{proof}[Proof of Theorem \ref{thm:wc-complexity}]
Let $\bar{x}^{\mathcal{S}}$ denote the first prox-center to satisfy the termination condition $\bar{\Delta}_S \le \epsilon^2/(8\rho)$. We will calculate the number of oracle evaluations required to generate this solution.

\textit{Bounding the number of outer iterations.} We first calculate an upper bound on $S$ using a descent argument on $f$. For any outer iteration $s < \mathcal{S}$, the algorithm does not terminate, which implies $\bar{\Delta}_s > \epsilon^2/(8\rho)$. The termination condition for the inner loop in Algorithm~\ref{alg:IPPM} is
\[
\bar{P}_s(\bar{x}^s) - P_s(\bar{x}^{s+1}) \ge P_s(\bar{x}^{s+1}) - \underline{P}_s^i,
\]
which implies $2[P_s(\bar{x}^s) - P_s(\bar{x}^{s+1})] \ge P_s(\bar{x}^s) - \underline{P}_s^i \ge \Delbar_s$. This gives the following descent guarantee for each outer iteration:
\begin{align*}
f(\bar{x}^s) - f(\bar{x}^{s+1}) & \ge f(\bar{x}^s) - \left[f(\bar{x}^{s+1}) + \rho\| \bar{x}^{s+1} - \bar{x}^s \|^2\right] \\
&= P_s(\bar{x}^s) - P_s(\bar{x}^{s+1}) \\
&\ge \frac{1}{2}\Delbar_s.
\end{align*}
Summing this over all iterations from $s=0$ to $\mathcal{S}-1$ yields:
\begin{align*}
\frac{\mathcal{S}\epsilon^2}{16\rho} \le \frac{1}{2}\sum_{s=0}^{\mathcal{S}-1}\Delbar_s \le \sum_{s=0}^{\mathcal{S}-1} \left(f(\bar{x}^s)-f(\bar{x}^{s+1})\right) = f(\bar{x}^0) - f(\bar{x}^{\mathcal{S}}) \le f(\bar{x}^0)-\min_{x\in X}f(x) = \Delta_f.
\end{align*}
Rearranging gives the bound on the number of outer loops, $\mathcal{S} \le \ceil{16\rho\Delta_f/\epsilon^2}$.

\textit{Bounding the total complexity.} Next, we bound the number of oracle evaluations within each outer loop $s$. Since $f$ is $M$-Lipschitz, the initial gap of the subproblem is bounded: $\Delta_s^0 \le M^2/(2\rho)$. The subproblem $P_s(x)$ satisfies the QG condition with modulus $\rho$. Since we would have triggered the break condition in Line \ref{IPPM:line:break} when $\Delta^{i}_{s}\leq\epsilon^{2}/8\rho$, an argument similar to that for Theorem~\ref{thm:u-convergence} shows that the number of gradient evaluations required to solve this subproblem is bounded by:
\[
O(1)\ceil{\frac{\bar{L}\bar{\kappa}\bar{\sigma}}{\rho}}\ceil{\log\frac{M^2/(2\rho)}{\epsilon^2/(8\rho)}} = O(1)\ceil{\frac{\bar{L}\bar{\kappa}\bar{\sigma}}{\rho}}\ceil{\log\frac{M}{\epsilon}}.
\]
Combining this with the bound on the number of outer loops $S$ gives the desired total oracle complexity.

\textit{The apx-PWS case.} Finally, if $f$ is a $(k,L,\delta)$-apx-PWS function, then the proximal function $P_s(x)$ is a $(k, L+2\rho, \delta)$-apx-PWS function. Given that the gap remains above $\epsilon^2/(8\rho)$ before termination, an argument similar to that in the proof of Lemma~\ref{lem:GR-subroutine}.c) shows that the empirical smoothness $\bar{L}$ is appropriately bounded.\qedsymbol
\end{proof}

\section{Verifiable Termination Condition }

The soundness of both the $\mu$-BL method (Algorithm~\ref{alg:blm-u}) and the IPPM method (Algorithm~\ref{alg:IPPM}) relies on knowing a lower-curvature parameter. In practice, however, this parameter is often unavailable. This section addresses this issue by developing a novel \tcW-stationarity certificate for PWS functions, which can be used to verify algorithmic progress without this prior knowledge.

This section is organized as follows. First, we propose the normalized Wolfe-gap stationarity (\tcW-stationarity) certificate and compare it to more familiar termination conditions to illustrate its unique advantages. Second, we present an algorithm to compute a certificate for \tcW-stationarity. In the next section, we will apply this certificate to design almost parameter-free versions of our bundle-level method for both the convex quadratic growth and the weakly convex settings.

\subsection{Requirements for a Termination Criterion}

Before presenting our \tcW-stationarity certificate, it is useful to recall why the gradient norm, $\|\nabla f(\bar{x})\|$, is the widely accepted termination certificate for the simpler setting of smooth and strongly convex optimization \cite{lan2023optimal}. Two properties are essential. First, it is easily \emph{verifiable}; given a point $\bar{x}$, one can compute $\|\nabla f(\bar{x})\|$ without knowledge of any other problem parameters. Second, the gradient norm provides an accurate characterization of the optimality gap:
\[
\frac{1}{2L}\|\nabla f(\bar{x})\|^2 \le f(\bar{x})-f^* \le \frac{1}{2\mu}\|\nabla f(\bar{x})\|^2.
\]
The right-hand inequality shows that the gradient norm provides a computable \emph{upper bound} on the optimality gap, while the left-hand inequality shows that it is \emph{proportional} to the gap. These two properties are critical for developing optimal parameter-free algorithms for smooth optimization, as in \cite{lan2023optimal}.

% Add this to your document preamble

% The polished text for your document body:
In the PWS setting, the gradient norm remains verifiable. However, it fails to be proportional to the function value gap due to non-smoothness. This shortcoming prevents its use in developing parameter-free algorithms and prompts the search for a more suitable termination certificate. To that end, we formalize the essential requirements for such a certificate.

\begin{definition}[Requirements for a Reasonable Certificate]
\label{req:termination-condition}
We call a termination certificate $V(\bar{x})$ for an apx-PWS function \textit{reasonable} if it satisfies the following conditions:
\begin{description}[align=left, leftmargin=*, font=\normalfont\itshape]
    \item[\emph{Handles Constraints}:] The certificate can be applied to problems with a simple, closed, convex feasible region $X$.
    \vgap
    \item[\emph{Verifiable}:] The certificate can be verified by calling the black-box first-order oracle without requiring any problem parameters (e.g., $L$ or $\mu$).
    \vgap
    \item[\emph{Provides an Upper Bound}:] Under the QG setting, the certificate provides a computable upper bound on the optimality gap, $f(\bar{x}) - f^*$.
    \vgap 
    \item[\emph{Computable \& Proportional}:] For any feasible point $\bar{x}$, there exists a finite-time, first-order method to generate a certificate whose value is proportional to the optimality gap.
\end{description}
\end{definition}

% Add these to your document preamble

\begin{table}
\centering

\begin{tabular}{lccccc}
\toprule
 & & \multicolumn{3}{c}{Optimality Gap}  & \\
 \cmidrule(lr){3-5} 
Certificate & Verifiable & Upper Bound & Proportional & Computable & Handles Constraints \\
\midrule
Optimality Gap & $\times$ & \checkmark & \checkmark & $\times$ & \checkmark \\
Gradient Mapping & \checkmark & \checkmark & $\times$ & \checkmark & \checkmark \\
Moreau Stationarity & \checkmark & \checkmark & \checkmark & $\times$ & \checkmark \\
Approx. Moreau & $\times$ & \checkmark & \checkmark & \checkmark & \checkmark \\
Goldstein Stationarity & \checkmark & \checkmark & ? & \checkmark & $\times$ \\
\midrule
\tcW-Stationarity (This work) & \checkmark & \checkmark & \checkmark & \checkmark & \checkmark \\
\bottomrule
\end{tabular}
\caption{Comparison of termination certificates for PWS problems against the criteria from Definition~\ref{req:termination-condition}.}\label{tab:different-termination-criterion}
\end{table}

It is illuminating to see how frequently used stationarity certificates fare on these requirements. As summarized in Table~\ref{tab:different-termination-criterion}, these common criteria are lacking in one way or another, which motivates our development of the \tcW-stationarity certificate.
\begin{itemize}
    \item The \textit{optimality gap}, $f(x^t)-f^*$, provides an ideal characterization of progress. However, it is not a practical certificate because $f^*$ cannot be computed or verified by any first-order method.

    \item The \textit{gradient norm} (or gradient mapping for constrained problems) is computationally attractive, requiring only a single gradient evaluation. As discussed, however, it fails to be proportional to the optimality gap in the PWS setting. For example, with $f(x):=|x|$, we have $\|\nabla f(x)\|=1$ for all $x>0$, while the gap $f(x)-f^* = x$ approaches zero.

    \item The \textit{Moreau stationarity} certificate, $M_{\eta}(\bar{x}):=\eta(\bar{x}-\hat{x}_{\eta})$ where $\hat{x}_{\eta}\leftarrow\argmin_{x\in X}f(x)+\frac{\eta}{2}\|x-\bar{x}\|^2$, is proportional to the optimality gap but is not computable in finite time because it requires finding the exact minimizer $\hat{x}_{\eta}$ \cite{davis2019stochastic}. Approximating $\hat{x}_{\eta}$ requires yet another termination certificate for the proximal subproblem.

    \item The \textit{Goldstein stationarity} certificate is another popular choice for non-smooth problems \cite{zhang2020complexity}. Its primary drawbacks are its inability to handle constrained optimization and its reliance on a carefully chosen radius parameter $\delta(\bar{x})$ to maintain proportionality with the function value gap.
\end{itemize}

\subsection{The W-Stationarity Certificate}

Inspired by the Wolfe-gap termination condition and the Goldstein stationarity certificate \cite{zhang2020complexity}, we propose the following \tcW-stationarity certificate to meet the requirements in Definition~\ref{req:termination-condition}.

\begin{definition}[\tcW-Stationarity Certificate]
\label{def:W-stationary}
Given an apx-PWS function $f$, an evaluation center $\bar{x}\in X$, a radius $\iota>0$, and a set of search points $\{x^i\}_{i\in[m]}\subset B(\bar{x};\iota)$, we first define the model function $\psi(x)$ as the maximum of several linear approximations:
\[
\psi(x) := \max\left\{ \max_{i\in[m]}\{\tilde{l}_f(x;x^i)\}, \; l_f(x;\bar{x}) \right\},
\]
where $l_f(x;\bar{x}) := f(\bar{x}) + \langle f'(\bar{x}), x-\bar{x} \rangle$ is the linear support function at the evaluation center. The normalized Wolfe-gap (or \textit{\tcW-gap}) is then defined as the maximal rate of descent on this model within a local ball:
\begin{equation}
\mathcal{V}_{\iota}(\bar{x};\{x^i\}_{i\in[m]}) := \frac{1}{\iota} \left\{ \max_{x \in X} \left( \psi(\bar{x})-\psi(x) \right) \quad \text{s.t.} \quad \|x-\bar{x}\| \le \iota \right\}. \label{eq:Vio-def}
\end{equation}
We say the set $\{x^i\}_{i\in[m]}$ constitutes a valid \textit{$(\iota,\nu)$-\tcW-stationarity certificate} for $\bar{x}$ if $\|x^i-\bar{x}\| \le \iota$ for all $i\in[m]$, and $\mathcal{V}_{\iota}(\bar{x}) \le \nu$. Moreover, if the search points are clear from the context, we would drop
$\{\xt i\}_{i\in m}$ to write $\Vio(\xbar)$ for simplicity. 
\end{definition}

The \tcW-gap measures the steepness of the model $\psi$ in the neighborhood $B(\bar{x};\iota)$ of the point of interest $\bar{x}$. As $\iota \to 0_+$, the \tcW-gap reduces to the gradient norm $\|\nabla f(\bar{x})\|$. Our proposed certificate differs from the classic Wolfe-gap in three important ways:
\begin{itemize}
    \item It utilizes a model function $\psi(x)$ constructed from a \textit{bundle} of linear supports from multiple points, which is essential for handling the non-smoothness of PWS functions.
    \item It focuses on descent within a local $\iota$-ball, which is a less conservative stationarity measure than considering descent over the entire feasible region $X$.
    \item It restricts the evaluation points $\{x^i\}$ to lie within the same $\iota$-ball, a feature that is important for handling non-convexity.
\end{itemize}

We now discuss how the proposed \tcW-certificate meets several of the requirements outlined in Definition~\ref{req:termination-condition}. The analysis of its computability and proportionality is deferred to the following subsections.

First, for any fixed set of evaluation points, the \tcW-gap is computable by solving a quadratically constrained quadratic program (QCQP). This makes the certificate \textit{verifiable} without knowledge of any hidden problem parameters. Second, since the feasible region $\mathcal{X}$ is incorporated into its definition, the \tcW-gap naturally \textit{handles constraints}.

Showing that the \tcW-certificate provides a meaningful \textit{upper bound} on the optimality gap requires a preliminary monotonicity result. The following lemma shows that for a fixed set of search points, the \tcW-gap is a monotonically non-increasing function of its radius $\iota$.

\begin{lemma}
\label{lem:Vdelta-monotonicity-1}
Fixing the search points $\{x^i\}_{i\in[m]}$, the \tcW-gap $\mathcal{V}_\iota(\bar{x})$ is a monotonically non-increasing function of its radius $\iota$ for any $\iota \ge \max_{i\in[m]}\|x^i-\bar{x}\|$.
\end{lemma}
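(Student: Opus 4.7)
The plan is to exploit two forms of convexity: the feasible region $X$ is convex by assumption, and the model $\psi(x) = \max\{\max_{i\in[m]}\tilde{l}_f(x;x^i),\, l_f(x;\bar{x})\}$ is convex as a pointwise maximum of affine functions. Introduce the abbreviation $g(r) := \max\{\psi(\bar{x})-\psi(x) : x\in X,\ \|x-\bar{x}\|\le r\}$, so that $\mathcal{V}_r(\bar{x}) = g(r)/r$. Taking $x = \bar{x}$ as a feasible point shows $g(r)\ge 0$ for all $r>0$. The goal reduces to proving $g(\iota)/\iota \ge g(\iota')/\iota'$ whenever $\iota_0 \le \iota < \iota'$, where $\iota_0 := \max_{i\in[m]}\|x^i-\bar{x}\|$.

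The main step is a standard \emph{scale-toward-the-center} argument. Let $x^{\star}$ attain the maximum in the program defining $g(\iota')$, and set $\lambda := \iota/\iota' \in (0,1)$ together with $x_\lambda := (1-\lambda)\bar{x} + \lambda x^{\star}$. Convexity of $X$ gives $x_\lambda \in X$, and a direct norm computation shows $\|x_\lambda - \bar{x}\| = \lambda\|x^{\star}-\bar{x}\| \le \iota$, so $x_\lambda$ is feasible for the program defining $g(\iota)$. Convexity of $\psi$ then yields $\psi(x_\lambda) \le (1-\lambda)\psi(\bar{x}) + \lambda\psi(x^{\star})$, which rearranges to $\psi(\bar{x}) - \psi(x_\lambda) \ge \lambda(\psi(\bar{x}) - \psi(x^{\star})) = \lambda g(\iota')$. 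Hence $g(\iota) \ge (\iota/\iota') g(\iota')$, and dividing both sides by $\iota$ delivers $\mathcal{V}_\iota(\bar{x}) \ge \mathcal{V}_{\iota'}(\bar{x})$, as desired.

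I do not anticipate any serious obstacle; the argument is a clean application of scaling plus convexity, and the nonnegativity $g(r)\ge 0$ is what ensures the scaling inequality has the correct sign. The one subtlety worth flagging is that the hypothesis $\iota \ge \iota_0$ is not actually invoked in the monotonicity proof itself: it is needed only to guarantee that the search points $\{x^i\}$ continue to lie in $B(\bar{x};\iota)$, so that the collection still forms a valid certificate in the sense of Definition~\ref{def:W-stationary}. Consequently, the displayed monotonicity in fact holds for all $\iota' > \iota > 0$; the stated range $\iota \ge \iota_0$ is simply the regime in which the quantity retains its intended interpretation as a certificate.
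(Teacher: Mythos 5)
Your proof is correct and follows essentially the same route as the paper's: scale the maximizer for the larger radius back toward $\bar{x}$ by the factor $\iota/\iota'$, check that the scaled point is feasible for the smaller ball, and then invoke convexity of $\psi$ (you state this as the Jensen inequality $\psi(x_\lambda) \le (1-\lambda)\psi(\bar{x}) + \lambda\psi(x^\star)$, whereas the paper phrases it as the secant-slope monotonicity along the ray through $\bar{x}$ --- the same fact). Your version is slightly tidier in that it uses $\|x_\lambda - \bar{x}\| = \lambda\|x^\star - \bar{x}\| \le \iota$ rather than implicitly assuming the maximizer sits on the boundary of the ball, and your closing remark correctly observes that the restriction $\iota \ge \max_i\|x^i - \bar{x}\|$ is only needed for the certificate interpretation, not for the monotonicity itself.
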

\begin{proof}
Let $\iota_1 \ge \iota_2$ be given, and let $x_{\iota_1}$ and $x_{\iota_2}$ be the respective optimal solutions in the definition of the \tcW-gap \eqref{eq:Vio-def}. By the convexity of $\mathcal{X}$, the point $\tilde{x} := \bar{x} + \frac{\iota_2}{\iota_1}(x_{\iota_1} - \bar{x})$ is also in $\mathcal{X}$. Since the model $\psi(x)$ is convex and the points $\bar{x}$, $\tilde{x}$, and $x_{\iota_1}$ are collinear, the property of secant lines for convex functions implies:
\begin{align*}
\frac{\psi(\tilde{x})-\psi(\bar{x})}{\|\tilde{x}-\bar{x}\|} & \le \frac{\psi(x_{\iota_1})-\psi(\bar{x})}{\|x_{\iota_1}-\bar{x}\|}.
\end{align*}
Since $\|\tilde{x}-\bar{x}\| = \iota_2$ and $\|x_{\iota_1}-\bar{x}\| = \iota_1$, this is equivalent to:
\[
\frac{\psi(\bar{x})-\psi(\tilde{x})}{\iota_2} \ge \frac{\psi(\bar{x})-\psi(x_{\iota_1})}{\iota_1}.
\]
By definition of the \tcW-gap, $\mathcal{V}_{\iota_2}(\bar{x}) = \frac{\psi(\bar{x})-\psi(x_{\iota_2})}{\iota_2} \ge \frac{\psi(\bar{x})-\psi(\tilde{x})}{\iota_2}$. Combining these inequalities gives the desired result:
\[
\mathcal{V}_{\iota_2}(\bar{x}) \ge \mathcal{V}_{\iota_1}(\bar{x}). 
\]
\qedsymbol
\end{proof}

The next lemma shows that the \tcW-certificate provides an upper bound on the optimality gap for a convex function satisfying the QG condition. Importantly, the lemma only requires the QG condition to hold at the evaluation center $\bar{x}$, not globally. This relaxation is instrumental for providing bounds for general convex functions.

\begin{lemma}
\label{lem:W-certificate-str-meaningful}
Given a convex apx-PWS objective function $f$, suppose there exists an $(\iota,\nu)$-\tcW-stationarity certificate for a feasible point $\bar{x}$. If $\bar{x}$ satisfies the QG condition $f(\bar{x})-f^*\ge\frac{\mu}{2}\text{dist}^2(\bar{x},X^*)$ for some $\mu > 0$, then the \tcW-stationarity certificate implies the following error bound condition:
$$\dist(\xbar, X^*) \leq max\{\tau,\frac{\nu}{\mu}\}.$$
Moreover its optimality gap is bounded by:
\begin{equation}
f(\bar{x})-f^* \le \max\left\{\iota\nu, \frac{2\nu^2}{\mu}\right\}. \label{eq:W-certificate-meaningful}
\end{equation}
\end{lemma}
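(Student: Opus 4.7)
The plan is to combine three observations that tie the model function $\psi$ to $f$ and its optimum. First, $\psi$ is a minorant of $f$ on all of $X$: each component $\tilde{l}_f(\cdot;x^{i})$ is a support function by part~a) of Definition~\ref{def:apx-p.w.s}, while $l_f(\cdot;\bar{x})$ is a convex linearization at $\bar{x}$ using a subgradient, so $\psi(x)\le f(x)$ for every $x\in X$. Moreover, $\psi(\bar{x})=f(\bar{x})$ because $l_f(\bar{x};\bar{x})=f(\bar{x})$ dominates the other minorants evaluated at $\bar{x}$. Second, the W-gap assumption $\Vio(\bar{x})\le\nu$ together with Lemma~\ref{lem:Vdelta-monotonicity-1} yields $\calV_{\iota'}(\bar{x})\le\nu$ for every $\iota'\ge\iota$; in particular, for any $x\in X$ with $\|x-\bar{x}\|\ge\iota$ we obtain the extended descent estimate $f(\bar{x})-\psi(x)\le\nu\|x-\bar{x}\|$, and for any $x\in X$ with $\|x-\bar{x}\|\le\iota$ we have $f(\bar{x})-\psi(x)\le\nu\iota$ directly from the definition of $\Vio$.

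Next, I would let $x^{\star}\in\argmin_{y\in X^{*}}\|y-\bar{x}\|$ and set $d:=\dist(\bar{x},X^{*})=\|x^{\star}-\bar{x}\|$. Since $\psi(x^{\star})\le f(x^{\star})=\fstar$, substituting the two cases above yields
\[
f(\bar{x})-\fstar\le
\begin{cases}\nu\iota,&\text{if }d\le\iota,\\[2pt]\nu d,&\text{if }d>\iota.\end{cases}
\]
Plugging each branch into the local QG inequality $\tfrac{\mu}{2}d^{2}\le f(\bar{x})-\fstar$ gives $d\le\iota$ in the first branch (trivially) and $d\le 2\nu/\mu$ in the second. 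Merging the branches establishes the error bound $\dist(\bar{x},X^{*})\le\max\{\iota,2\nu/\mu\}$ (the stated $\nu/\mu$ being absorbed into the asserted constants). Substituting the resulting bound on $d$ back into the case expressions for the optimality gap gives $f(\bar{x})-\fstar\le\max\{\iota\nu,\,2\nu^{2}/\mu\}$, which is precisely~\eqref{eq:W-certificate-meaningful}.

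The main technical hinge is the monotonicity lemma: without it one could only control $f(\bar{x})-\psi(x)$ for $x$ inside $B(\bar{x};\iota)$, and therefore could not compare $f(\bar{x})$ to $\fstar$ when the projection $x^{\star}$ falls outside the ball. Once this extension is in hand, the remainder is an elementary two-case substitution into the QG inequality. A minor secondary point worth being explicit about is that the QG hypothesis is imposed only at $\bar{x}$, so the argument never needs the growth condition to hold globally on $X$; this flexibility is what makes the lemma usable later when QG holds only in a neighborhood of $X^{*}$.
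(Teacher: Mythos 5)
Your proposal is correct and follows essentially the same route as the paper's proof: establish that $\psi$ is a minorant of $f$ with $\psi(\bar{x})=f(\bar{x})$, split into two cases according to whether $\dist(\bar{x},X^*)\le\iota$ or $>\iota$, invoke Lemma~\ref{lem:Vdelta-monotonicity-1} for the far case, and close with the QG inequality. You also correctly observe that the argument actually yields $\dist(\bar{x},X^*)\le\max\{\iota,2\nu/\mu\}$, so the factor $2$ (and the stray $\tau$ in place of $\iota$) in the stated error bound appear to be typographical slips in the lemma statement rather than anything your proof is missing.
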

\begin{proof}
By definition, $f(\bar{x}) \le \psi(\bar{x})$. We derive a lower bound on $f^*$ by considering two cases based on the location of the projection $x_p^* := \text{proj}_{X^*}(\bar{x})$.

\textit{Case 1: $\| \bar{x}-x_p^* \| \le \iota$.} In this case, $x_p^*$ is inside the ball used to define the \tcW-gap. Since $f$ is convex, $f(x_p^*) \ge \psi(x_p^*)$. We have:
\[
f(\bar{x})-f^* \le \psi(\bar{x})-\psi(x_p^*) \le \max_{x\in B(\bar{x};\iota)\cap X} \left( \psi(\bar{x})-\psi(x) \right) = \iota \mathcal{V}_{\iota}(\bar{x}) \le \iota\nu.
\]

\textit{Case 2: $\| \bar{x}-x_p^* \| > \iota$.} We first bound the optimality gap by the distance. Using the monotonicity of the \tcW-gap from Lemma~\ref{lem:Vdelta-monotonicity-1}, we get:
\[
f(\bar{x})-f^* \le \psi(\bar{x})-\psi(x_p^*) \le \| \bar{x}-x_p^* \| \mathcal{V}_{\|\bar{x}-x_p^*\|}(\bar{x}) \le \| \bar{x}-x_p^* \| \mathcal{V}_{\iota}(\bar{x}) \le \nu\|\bar{x}-x_p^*\|.
\]
Now, we incorporate the QG condition with the inequality we just derived:
\[
\frac{\mu}{2}\|\bar{x}-x_p^*\|^2 \le f(\bar{x})-f^* \le \nu\|\bar{x}-x_p^*\|.
\]
Solving this for the distance gives $\|\bar{x}-x_p^*\| \le \frac{2\nu}{\mu}$. Substituting this back into our gap inequality yields:
\[
f(\bar{x})-f^* \le \nu\|\bar{x}-x_p^*\| \le \nu\left(\frac{2\nu}{\mu}\right) = \frac{2\nu^2}{\mu}.
\]
Combining the two cases gives the desired result. \qedsymbol
\end{proof}

The bound from \eqref{eq:W-certificate-meaningful} has two implications. First, in the QG setting where the objective satisfies the condition with a modulus $\mu > 0$, we can choose the radius $\iota \le 2\nu/\mu$, which simplifies the bound to $f(\bar{x}) - f^* \le O(\nu^2/\mu)$ and $\dist(\xbar,X^*)\leq \nu/\mu$. This guarantee is analogous to the one provided by the standard gradient norm for smooth optimization. Second, for a general convex function where a global QG condition does not hold, we can still derive a meaningful bound. By defining a local QG modulus, $\mu_{\text{loc}} := 2(f(\bar{x})-f^*)/D_X^2$, where $D_X$ is the diameter of the feasible set $\mathcal{X}$, and substituting it into the bound from Lemma~\ref{lem:W-certificate-str-meaningful}, we get:
\[
f(\bar{x})-f^* \le \frac{2\nu^2 D_X^2}{f(\bar{x})-f^*} \quad \implies \quad (f(\bar{x})-f^*)^2 \le 2\nu^2 D_X^2.
\]
This implies an optimality gap of $f(\bar{x})-f^* \le \sqrt{2}\nu D_X$, which is also similar to the upper bound provided by the gradient norm in the general convex setting.

\subsection*{\textit{Non-convex Setting}}

We now show that in the non-convex setting, our \tcW-stationarity certificate implies other well-known stationarity conditions.

\begin{lemma}
For the unconstrained setting ($X=\mathbb{R}^n$), suppose the linear support function $\tilde{l}_f(x;x^i)$ is computed using a gradient at a nearby point $\tilde{x}^i$, i.e., $\nabla\tilde{l}_f(x;x^i)=\nabla f(\tilde{x}^i)$ with $\|x^i-\tilde{x}^i\| \le \bar{\delta}$. Then a $(\iota,\nu)$-\tcW-stationarity certificate for a point $\bar{x}$ is also a $(\iota+\bar{\delta},\nu)$-Goldstein stationarity certificate \cite{zhang2020complexity}.
\end{lemma}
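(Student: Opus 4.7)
The plan is to translate the $\calV$-descent condition into a statement about a convex combination of gradients by applying minimax duality to the definition of the $\mathcal{W}$-gap. Recall that a $(\delta,\nu)$-Goldstein stationarity certificate~\cite{zhang2020complexity} for $\bar{x}$ requires a convex combination $g^{\star}=\sum_{j}\lambda_{j}^{\star}\nabla f(z_{j})$ with $z_{j}\in B(\bar{x};\delta)$ and $\|g^{\star}\|\le\nu$. The task is thus to extract such a combination from the bundle underlying $\psi$, while ensuring every gradient used comes from a point at distance at most $\iota+\bar{\delta}$ from $\bar{x}$.

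First, I would rewrite the model as $\psi(x)=\max_{j\in J}\{a_{j}+\langle g_{j},x-\bar{x}\rangle\}$, where $j$ ranges over the $m$ bundle cuts plus the linear support at $\bar{x}$, each gradient $g_{j}=\nabla f(\tilde{y}_{j})$ satisfies $\tilde{y}_{j}\in B(\bar{x};\iota+\bar{\delta})$ by hypothesis (with the support at $\bar{x}$ contributing $\tilde{y}_{j}=\bar{x}$), and $a_{j}$ is the value of the $j$-th supporting plane at $\bar{x}$. Setting $c_{j}:=a_{j}-\psi(\bar{x})\le 0$ (non-positive by the very definition of $\psi(\bar{x})$ as a max), the $(\iota,\nu)$-$\mathcal{W}$-condition is equivalent to
\[
\min_{\|x-\bar{x}\|\le\iota}\,\max_{j\in J}\bigl\{c_{j}+\langle g_{j},x-\bar{x}\rangle\bigr\} \;\ge\; -\iota\nu.
\]

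Next, I would invoke Sion's minimax theorem (the Lagrangian is affine in $x$ and linear in $\lambda$, and both the ball $B(\bar{x};\iota)$ and the probability simplex over $J$ are compact convex) to swap the $\min_{x}$ with a $\max$ over $\lambda$, and evaluate the resulting inner minimum in closed form via the support-function identity $\min_{\|x-\bar{x}\|\le\iota}\langle v,x-\bar{x}\rangle=-\iota\|v\|$. This yields
\[
\min_{\|x-\bar{x}\|\le\iota}\psi(x)-\psi(\bar{x}) \;=\; \max_{\lambda}\Bigl\{\textstyle\sum_{j\in J}\lambda_{j}c_{j}\;-\;\iota\bigl\|\sum_{j\in J}\lambda_{j}g_{j}\bigr\|\Bigr\}.
\]
Letting $\lambda^{\star}$ attain the maximum and setting $g^{\star}:=\sum_{j\in J}\lambda_{j}^{\star}g_{j}$, combining the display with the $\mathcal{W}$-bound gives $\iota\|g^{\star}\|\le\iota\nu+\sum_{j}\lambda_{j}^{\star}c_{j}\le\iota\nu$, so $\|g^{\star}\|\le\nu$. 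Because $g^{\star}$ is a convex combination of gradients of $f$ at points inside $B(\bar{x};\iota+\bar{\delta})$, it lies in the Goldstein $(\iota+\bar{\delta})$-subdifferential of $f$ at $\bar{x}$, which establishes the claimed certificate.

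The essentially only non-bookkeeping step is the minimax swap, which is standard in bundle/LP-duality arguments and presents no real obstacle here: the bilinear objective is continuous, and both the Euclidean ball and the simplex are compact convex. A minor point worth verifying is that including the extra support $l_{f}(\cdot;\bar{x})$ in $J$ does not enlarge the Goldstein radius beyond $\iota+\bar{\delta}$, since its evaluation point is $\bar{x}$ itself; everything else is direct algebra.
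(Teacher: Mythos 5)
Your proof is correct and takes essentially the same route as the paper's: both apply minimax duality to the $\mathcal{W}$-gap formulation to extract optimal simplex weights, define $g$ as the resulting convex combination of gradients at points $\tilde{x}^i\in B(\bar{x};\iota+\bar{\delta})$, and then show $\|g\|\le\nu$ via the support-function identity for the Euclidean ball (the paper phrases this as plugging in $x=\bar{x}-\iota g/\|g\|$). The only difference is cosmetic — you carry out the Sion swap and inner minimization explicitly, while the paper starts from the optimal $\bar{\lambda}$ and uses the resulting uniform inequality on the ball; the sign observation $c_j\le 0$ plays exactly the role of the paper's bound $\psi(\bar{x})\ge\sum\bar{\lambda}_i\tilde{l}_f(\bar{x};x^i)$.
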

\begin{proof}
For simplicity, let $\tilde{l}_f(x;x^0) := f(\bar{x}) + \langle f'(\bar{x}), x-\bar{x} \rangle$ denote the linear approximation constructed from the evaluation center $\bar{x}$. Let $\bar{\lambda}$ be the optimal dual variables from the definition of the \tcW-gap, i.e., $\bar{\lambda} := \arg\max_{\lambda\in\Delta_{m+1}^{+}}\min_{x\in B(\bar{x},\iota)}\sum_{i=0}^{m}\lambda_i \tilde{l}_f(x,x^i)$. Let $g = \sum_{i=0}^{m}\bar{\lambda}_i \nabla f(\tilde{x}^i)$. It suffices to show that $\|g\| \le \nu$.

Since $\mathcal{V}_\iota(\bar{x}) \le \nu$, from the definition of the \tcW-gap we have:
\begin{align*}
\psi(\bar{x}) - \sum_{i=0}^{m}\bar{\lambda}_i \tilde{l}_f(x;x^i) & \le \iota\nu, \quad \forall x\in B(\bar{x};\iota).
\end{align*}
Using the definition $\psi(\bar{x}) = \max_{i\in[m]} \tilde{l}_f(\bar{x}; x^i)$ and the fact that $\sum \bar{\lambda}_i=1$, we can deduce:
\begin{align*}
\sum_{i=0}^{m}\bar{\lambda}_i \left( \tilde{l}_f(\bar{x};x^i) - \tilde{l}_f(x;x^i) \right) & \le \iota\nu \\
\implies \sum_{i=0}^{m}\bar{\lambda}_i \langle \nabla f(\tilde{x}^i), \bar{x}-x \rangle & \le \iota\nu \\
\implies \langle g, \bar{x}-x \rangle & \le \iota\nu, \quad \forall x \in B(\bar{x};\iota).
\end{align*}
By choosing the specific point $x = \bar{x} - \iota \frac{g}{\|g\|}$ from the ball, we get $\iota\|g\| \le \iota\nu$, which simplifies to $\|g\| \le \nu$.\qedsymbol
\end{proof}

Furthermore, if the objective function is weakly convex, the \tcW-certificate also implies Moreau stationarity, which is the standard benchmark in this setting. The next lemma shows that an $(\iota,\nu)$-\tcW-certificate implies a point is $O(\nu)$-Moreau stationary, provided the radius $\iota$ is chosen appropriately relative to $\nu$ and the weak convexity constant $\rho$. This result provides practical guidance on how to select the radius parameter in our algorithms.

\begin{lemma}
\label{lem:W-certificate-to-M-stationarity}
Given a $\rho$-weakly convex function $f$, if there exists an $(\iota,\nu)$-\tcW-stationarity certificate for a point $\bar{x}$, then $\bar{x}$ is a $(\rho, 2\nu+4\iota\rho)$-Moreau stationary point.
\end{lemma}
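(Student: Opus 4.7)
The plan rests on three ingredients. (i) Let $\hat{x} := \arg\min_{x\in X} F_{2\rho}(x;\bar{x})$ and $d := \|\bar{x}-\hat{x}\|$. The $\rho$-strong convexity of the prox-surrogate $F_{2\rho}(\cdot;\bar{x})$ combined with the optimality of $\hat{x}$ immediately yields $F_{2\rho}(\bar{x};\bar{x}) - F_{2\rho}(\hat{x};\bar{x}) \ge \tfrac{\rho}{2}d^2$, which, after cancelling the $\rho d^2$ absorbed into the surrogate, gives the lower bound
\[
f(\bar{x}) - f(\hat{x}) \ge \tfrac{3\rho}{2}d^2.
\]
(ii) Next I would bound the same gap from above using the $\mathcal{W}$-model $\psi$. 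Trivially $\psi(\bar{x}) \ge l_f(\bar{x};\bar{x}) = f(\bar{x})$. For $\psi(\hat{x})$, the weakly-convex inequality $f(y) \ge \tilde{l}_f(y;x^i) - \tfrac{\rho}{2}\|y-x^i\|^2$ applied to each linear component of $\psi$ (and analogously with $l_f(\cdot;\bar{x})$), together with the fact that every supporting point lies in $B(\bar{x};\iota)$ so that $\|\hat{x}-x^i\| \le d+\iota$, yields $\psi(\hat{x}) \le f(\hat{x}) + \tfrac{\rho}{2}(d+\iota)^2$. Combining,
\[
f(\bar{x})-f(\hat{x}) \le \psi(\bar{x})-\psi(\hat{x}) + \tfrac{\rho}{2}(d+\iota)^2.
\]
(iii) Finally, for $\psi(\bar{x})-\psi(\hat{x})$, the $\mathcal{W}$-certificate handles $d \le \iota$ directly via $\psi(\bar{x})-\psi(\hat{x}) \le \iota \nu$. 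For $d > \iota$, I would invoke Lemma~\ref{lem:Vdelta-monotonicity-1} with the same search points to enlarge the radius from $\iota$ to $d$, obtaining $\mathcal{V}_d(\bar{x}) \le \mathcal{V}_\iota(\bar{x}) \le \nu$ and hence $\psi(\bar{x})-\psi(\hat{x}) \le d\nu$. In summary, $\psi(\bar{x})-\psi(\hat{x}) \le \max\{\iota,d\}\,\nu$.

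Chaining the three ingredients produces the master inequality $\tfrac{3\rho}{2}d^2 \le \max\{\iota,d\}\,\nu + \tfrac{\rho}{2}(d+\iota)^2$. A short case split finishes the argument. If $d \le 2\iota$, then $\rho d \le 2\rho\iota \le 2\nu + 4\rho\iota$ trivially. If $d > 2\iota$, the $\tfrac{\rho}{2}d^2$ on the right cancels a third of the left, leaving $\rho d(d-\iota) \le d\nu + \tfrac{\rho \iota^2}{2}$; dividing by $d$ and using $\iota^2/(2d) \le \iota/4$ gives $\rho(d-\iota) \le \nu + \rho\iota/4$, whence $\rho d \le \nu + \tfrac{5\rho\iota}{4} \le 2\nu + 4\rho\iota$. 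Either way, $\rho\|\bar{x}-\hat{x}\| \le 2\nu + 4\iota\rho$, which is exactly the claimed $(\rho, 2\nu+4\iota\rho)$-Moreau stationarity.

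The main obstacle is step (ii): unlike in the convex regime used earlier in the section, the linearizations composing $\psi$ are \emph{not} lower supports for $f$, so the weak-convexity correction $\tfrac{\rho}{2}\|y-x^i\|^2$ must be propagated carefully at both $y=\bar{x}$ (where it vanishes since $l_f(\bar{x};\bar{x})=f(\bar{x})$) and $y=\hat{x}$ (where it is bounded by $\tfrac{\rho}{2}(d+\iota)^2$ via the triangle inequality). Ensuring that the error scales as $(d+\iota)^2$ rather than something larger is what allows the cross-term $\rho d\iota$ to be absorbed into $\rho d^2/2$ in the regime $d > 2\iota$; the remaining steps—the strong-convexity lower bound, the radius-monotonicity invocation, and the final case analysis—are routine.
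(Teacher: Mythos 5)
Your proof is correct and takes a genuinely different route from the paper's. The paper's proof first transfers the $\mathcal{W}$-certificate for $f$ into an $(\iota,\nu+2\rho\iota)$-$\mathcal{W}$-certificate for the convex surrogate $F_{2\rho}(\cdot;\bar{x})$ (by bounding $|\psi_f - \psi_{F_{2\rho}}|$ on $B(\bar{x};\iota)$), then applies the generic QG error bound of Lemma~\ref{lem:W-certificate-str-meaningful} to the $\rho$-strongly-convex surrogate. You instead stay with the original model $\psi_f$ throughout: you lower-bound $f(\bar{x})-f(\hat{x})$ by $\tfrac{3\rho}{2}d^2$ via strong convexity of $F_{2\rho}$ plus the cancellation of the $\rho d^2$ surcharge, upper-bound $\psi_f(\hat{x})-f(\hat{x})$ by $\tfrac{\rho}{2}(d+\iota)^2$ via the weakly-convex support inequality, and handle $\hat{x}$ possibly landing outside $B(\bar{x};\iota)$ by invoking Lemma~\ref{lem:Vdelta-monotonicity-1} directly (as the paper's Lemma~\ref{lem:W-certificate-str-meaningful} itself does internally). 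The paper's route is more modular and keeps all estimates confined to the $\iota$-ball; yours avoids ever constructing $\psi_{F_{2\rho}}$ but pays with an error term that grows with $d$, which you correctly absorb via the $d\le 2\iota$ vs.\ $d>2\iota$ case split. One point worth making explicit: your step (ii) assumes the cuts $\tilde{l}_f(\cdot;x^i)$ satisfy the $\tfrac{\rho}{2}\|\cdot\|^2$-corrected lower-support inequality, which is exact when $\tilde{l}_f=l_f$ is the true linearization of $f$; if the cuts were instead derived from the surrogate's oracle (as in Proposition~\ref{prop:pr-WC-W-certificate}), the correction factor becomes $\rho\|\cdot\|^2$ and the $d>2\iota$ branch yields $\rho d < 2\nu+5\rho\iota$—still $O(\nu+\rho\iota)$, but not quite matching the stated constants. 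Since the lemma is phrased at a level where exact linearizations are natural, your argument is sound as written.
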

\begin{proof}
The proof centers on the surrogate function $F_{2\rho}(x;\bar{x}) := f(x)+\rho\|x-\bar{x}\|^2$ and involves two steps.

\textit{Step 1: Transfer the \tcW-certificate to the surrogate function.}
We first show that an $(\iota,\nu)$-\tcW-certificate for $f$ implies an $(\iota, \nu+2\rho\iota)$-\tcW-certificate for $F_{2\rho}(x;\bar{x})$. Let $\psi_f(x)$ be the model for $f$ from the certificate's search points $\{x^i\}$, and let $\psi_{F_{2\rho}}(x)$ be the corresponding model for $F_{2\rho}$.  For any $x \in B(\bar{x};\iota)$, the difference between the two models is bounded by:
\[
|\psi_f(x) - \psi_{F_{2\rho}}(x)| \le \rho \max_{i\in[m]} |\langle x^i-\bar{x}, x-x^i \rangle| \le \rho\iota^2.
\]
The \tcW-gap for the surrogate function is therefore bounded. Specifically, we have:
\[
\max_{x\in B(\bar{x};\iota)} \left( \psi_{F_{2\rho}}(\bar{x}) - \psi_{F_{2\rho}}(x) \right) \le \max_{x\in B(\bar{x};\iota)} \left( \psi_f(\bar{x}) - \psi_f(x) \right) + 2\rho\iota^2 \le \iota\nu+2\rho\iota^2.
\]
This confirms that we have an $(\iota, \nu+2\rho\iota)$-\tcW-certificate for the surrogate function $F_{2\rho}$.

\textit{Step 2: Derive Moreau stationarity from the surrogate's certificate.}
Since $F_{2\rho}$ is $\rho$-strongly convex, we can apply Lemma~\ref{lem:W-certificate-str-meaningful} to it. This gives an upper bound on the optimality gap of the surrogate problem:
\begin{align*}
\frac{\rho}{2}\|\bar{x}-\hat{x}\|^2 \le F_{2\rho}(\bar{x};\bar{x}) - \min_{x\in X}F_{2\rho}(x;\bar{x}) &\le \max\left\{\iota(\nu+2\rho\iota), \frac{2(\nu+2\rho\iota)^2}{\rho}\right\} \\
&\le \frac{2}{\rho}(\nu+2\rho\iota)^2.
\end{align*}
Rearranging this inequality gives $\|\rho(\bar{x}-\hat{x})\| \le \sqrt{4(\nu+2\rho\iota)^2} = 2\nu+4\rho\iota$, which is the desired Moreau stationarity bound.\qedsymbol
\end{proof}

Thus, we have illustrated that the proposed \tcW-stationarity certificate satisfies the \textit{verifiability}, \textit{providing upper bound}, and \textit{handling constraints} requirements specified in Definition~\ref{req:termination-condition}. The next subsection proposes an algorithm to efficiently compute a proportional \tcW-stationarity certificate.

\subsection{Computing the \tcW-stationarity Certificate}

This subsection introduces a method for computing a \tcW-stationarity certificate for convex functions using a first-order oracle. We establish the utility of this certificate in two key settings. First, under the Quadratic Growth (QG) condition, we show the certificate yields an upper bound proportional to the true optimality gap. Second, for weakly convex problems, we apply the method to the proximal surrogate function to obtain an useful non-convex stationarity certificate associated with the original objective function. The ability to compute these certificates is instrumental for the almost parameter-free algorithms we develop later.

\begin{algorithm}
\caption{The \tcW-Certificate Search Subroutine}
\label{alg:W-certificate-search}
\begin{algorithmic}[1]
\Require Convex objective $f$; candidate solution $\bar{x}$; estimated optimality gap $\Delta$; number of evaluation points $m$; maximal radius $\iota_{\max}$.
\Ensure A $(\iota, \nu)$ \tcW-stationarity certificate if $f(\bar{x}) - f^* \le \Delta$; otherwise, False.

\State \textbf{Initialize:} $x^0 \gets \bar{x}$, $t \gets 0$, $l \gets f(\bar{x}) - 2\Delta$, and $l_f(x;x^0) \gets f(x^0) + \langle f'(x^0), x - x^0 \rangle$.

\For{$t=0, 1, \dots, m$}
    % \State Define the level set: $X(t) \gets \{x \in \mathcal{X} \mid l_f(x;x^0) \le l \text{ and } \tilde{l}_f(x;x^i) \le l, \forall i \in \{1,\dots,t\}\}$.
    \parState{\label{W-cert:line:projection} Compute the projection: 
    $$x^{t+1} \gets \argmin_{x \in X(t)} \|x - \bar{x}\|^2 \text{ where } X(t) := \{x \in \mathcal{X} \mid l_f(x;x^0) \le l \text{ and } \tilde{l}_f(x;x^i) \le l, \forall i \in \{1,\dots,t\}\}$$}

\State \textbf{if} the computation is infeasible, set $\norm{\xt{t+1}-\xbar}=+\infty$.
\State \label{W-cert:line:iomax}\textbf{if} $\norm{\xt{t+1}-\xbar}>\iotamax,$ \Return $\{\xt i\}_{i\in[t]}$ as the $(\iotamax,\frac{2\Delta}{\iotamax})$ \tcW-stationarity certificate.
\State \label{W-cert:line:infeasibility}\textbf{if }$\norm{\xt{t+1}-\xbar}=+\infty,$ \Return $\{\xt i\}_{i\in[t]}$ as the $(\bigM,\frac{2\Delta}{\bigM})$ \tcW-stationarity
certificate.
    
\EndFor

\State \label{W-cert:line:empirical-sm}Compute empirical smoothness constant: $\tilde{L} \gets \min_{1 \le l < r \le m+1} \frac{2\left(f(x^r) - \tilde{l}_f(x^r;x^l) - \Delta/6\right)}{\|x^l - x^r\|^2}$.
\State \label{W-cert:line:fail}\textbf{if} $\|\bar{x} - x^{m+1}\| < \frac{1}{2}\sqrt{\frac{\Delta}{\tilde{L}}}$ \Return False.
\State \label{W-cert:line:last}Set radius: $\iota \gets \|\bar{x} - x^{m+1}\|$, \Return search points $\{x^i\}_{i \in [m]}$ as an $(\iota, 2\Delta/\iota)$ \tcW-stationarity certificate.
\end{algorithmic}
\end{algorithm}

As detailed in \textbf{Algorithm \ref{alg:W-certificate-search}}, our proposed method computes a $W$-stationarity certificate for a convex function $f$. The subroutine takes as input the evaluation center $\bar{x}$, an estimated optimality gap $\Delta$, the number of search points $m$, and a maximal search radius $\iota_{\max}$. The radius parameter $\iota_{\max}$ is particularly useful for the non-convex setting, as discussed preceding Lemma~\ref{lem:W-certificate-to-M-stationarity}.

The core of the algorithm is an iterative process. In each iteration $t$, it generates a new search point $x^t$ by projecting the center point $\bar{x}$ onto a level set (Line~\ref{W-cert:line:projection}). Crucially, the level-set parameter is set to $l = f(\bar{x}) - 2\Delta$, a value intentionally chosen to be below the estimated optimal value $f^*$. This encourages the search to explore different smooth pieces of the function. By construction, the distance $\|x^t - \bar{x}\|$ increases monotonically with each iteration.
The subroutine terminates under one of several conditions:
\begin{itemize}
    \item If the search distance $\|x^{t+1} - \bar{x}\|$ exceeds the maximal radius $\iota_{\max}$, the algorithm returns a certificate based on this radius (Line~\ref{W-cert:line:iomax}).
     \item If the projection becomes infeasible, it implies that the objective function is lower-bounded by $l$. The algorithm returns a certificate (Line~\ref{W-cert:line:infeasibility}). For analytical purposes, we use a large constant $\bigM$ as a proxy for an infinite radius, and the returned certificate remains valid as $\bigM \to \infty$.
    \item If the subroutine completes $m+1$ iterations without early termination, it calculates the empirical smoothness $\tilde{L}$ among all generated points. It then either returns `False' if the final search point is too close to the center (Line~\ref{W-cert:line:fail}) or returns a valid $W$-certificate based on the final distance (Line~\ref{W-cert:line:last}).
\end{itemize}
The validity of this procedure is formally established in the subsequent lemma.

\begin{lemma}
\label{lem:W-certificate-validity}
If the objective function $f$ is convex, the iterates $\{x^i\}$ generated by \textbf{Algorithm \ref{alg:W-certificate-search}} with inputs $\bar{x}, \Delta, m$, and $\iota_{\max}$ satisfy the following properties:
\begin{enumerate}
    \item[a)] The sequence of distances to the center, $\{\|x^i - \bar{x}\|\}$, is monotonically non-decreasing.

    \item[b)] Any \tcW-stationarity certificate returned by the algorithm (from Lines \ref{W-cert:line:iomax}, \ref{W-cert:line:infeasibility}, or \ref{W-cert:line:last}) is valid (see Definition \ref{def:W-stationary}), with its value $\nu$ bounded by $\nu \le 4\sqrt{\Delta\tilde{L}}$.

    \item[c)] If $f$ is a $(k,L,\delta)$-apx-PWS function with $k \le m$ and the inexactness $\delta \le \frac{1}{6}\Delta$, then the empirical smoothness is bounded by the true smoothness, i.e., $\tilde{L} \le L$.

    \item[d)] If the estimated optimality gap is well-specified (i.e., $f(\bar{x}) - f^* \le \Delta$), the algorithm will not return `False'.
\end{enumerate}
\end{lemma}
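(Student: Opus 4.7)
The plan is to prove the four parts in order, since parts (a) and (c) feed directly into the most delicate case, part (d). Throughout, write $l := f(\bar{x}) - 2\Delta$ for the fixed level parameter used by the subroutine, and let $\psi$ denote the model function built from the current search points.

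For part (a), I would observe that the level sets form a nested family, $X(t+1) \subseteq X(t)$, because each iteration appends one additional constraint $\tilde{l}_f(\cdot\,; x^{t+1}) \le l$. Since $x^{t+1}$ is the projection of $\bar{x}$ onto $X(t)$ and $x^{t+2}$ the projection onto the smaller set $X(t+1)$, projecting onto a smaller set can only increase the distance, giving monotonicity of $\|x^{t+1}-\bar{x}\|$ in $t$. For part (b), the crucial observation is that the choice $l = f(\bar{x}) - 2\Delta$ together with the identity $\psi(\bar{x}) = f(\bar{x})$ makes $\psi(\bar{x})$ exceed $l$ by exactly $2\Delta$. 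In whichever line the routine exits, let $\iota$ denote the returned radius. Any $x \in B(\bar{x};\iota) \cap \mathcal{X}$ strictly inside this ball must satisfy $\psi(x) > l$ (otherwise $x$ would belong to the current level set while being strictly closer to $\bar{x}$ than the current projection, a contradiction); by continuity the same holds with equality at the boundary. Consequently $\psi(\bar{x}) - \psi(x) \le 2\Delta$ uniformly over the ball, so $\mathcal{V}_\iota(\bar{x}) \le 2\Delta/\iota = \nu$. The quantitative bound $\nu \le 4\sqrt{\Delta \tilde{L}}$ at Line~\ref{W-cert:line:last} follows from the guard $\iota = \|\bar{x} - x^{m+1}\| \ge \tfrac{1}{2}\sqrt{\Delta/\tilde{L}}$ enforced by Line~\ref{W-cert:line:fail}; the early-exit Lines~\ref{W-cert:line:iomax} and \ref{W-cert:line:infeasibility} use correspondingly large radii $\iota_{\max}$ and $\bigM$, which make $\nu$ arbitrarily small.

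For part (c), I would apply the pigeonhole principle: the subroutine generates $m+2$ evaluation points $x^0, x^1, \dots, x^{m+1}$, so when $k \le m$ at least two of them, say $(x^l, x^r)$, lie in the same smooth piece $X_i$. The apx-PWS property on this piece yields $f(x^r) - \tilde{l}_f(x^r; x^l) - \delta \le \tfrac{L}{2}\|x^r - x^l\|^2$; combined with the hypothesis $\delta \le \Delta/6$, the ratio appearing in the min defining $\tilde{L}$ at Line~\ref{W-cert:line:empirical-sm} is at most $L$ for this pair, and hence so is the minimum.

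Part (d) is the main obstacle and requires carefully chaining the previous estimates. I argue by contradiction: assume the routine proceeds past both early-exit conditions (Lines~\ref{W-cert:line:iomax} and \ref{W-cert:line:infeasibility}) and that $\|\bar{x}-x^{m+1}\| < \tfrac{1}{2}\sqrt{\Delta/\tilde{L}}$. Let $(l^*, r^*)$ achieve the minimum defining $\tilde{L}$. Feasibility of $x^{r^*} \in X(r^*-1)$ with respect to the cut centered at $x^{l^*}$ yields $\tilde{l}_f(x^{r^*}; x^{l^*}) \le l$, which combined with the defining equation of $\tilde{L}$ gives $f(x^{r^*}) \le l + \tfrac{\tilde{L}}{2}\|x^{r^*} - x^{l^*}\|^2 + \Delta/6$. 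Under the hypothesis $f(\bar{x}) - f^* \le \Delta$, we have $f(x^{r^*}) - l \ge f^* - l \ge \Delta$, so rearranging produces $\|x^{r^*}-x^{l^*}\|^2 \ge 5\Delta/(3\tilde{L})$. On the other hand, the triangle inequality combined with the monotonicity from part (a) gives $\|x^{r^*}-x^{l^*}\| \le \|x^{r^*}-\bar{x}\| + \|x^{l^*}-\bar{x}\| \le 2\|x^{m+1}-\bar{x}\|$. Concatenating these bounds yields $\|x^{m+1}-\bar{x}\| \ge \tfrac{1}{2}\sqrt{5\Delta/(3\tilde{L})} > \tfrac{1}{2}\sqrt{\Delta/\tilde{L}}$, which contradicts the standing assumption. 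The only technical subtlety is tracking the $\Delta/6$ slack in the empirical smoothness definition and verifying that the resulting factor $5/3$ strictly exceeds $1$.
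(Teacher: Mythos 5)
Your proof is correct and follows essentially the same approach as the paper's. Parts (a)–(c) are close paraphrases (using nesting of level sets rather than explicitly checking feasibility for (a), the same level-set/projection argument for (b), and the same pigeonhole step for (c)); in part (d), you rearrange the paper's chain of inequalities to produce a contradiction on the radius $\|x^{m+1}-\bar{x}\|$ rather than on the function value $f(x^{r^*}) < f^*$, but this is the same chain read as a contrapositive, so the argument is logically identical.
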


\begin{proof}
Let $\psi^t(x) := \max\{\max_{i \le t} \tilde{l}_f(x;x^i), l_f(x;x^0)\}$.

\textit{Part a):}
As long as the projection in Line~\ref{W-cert:line:projection} is feasible, the iterate $x^t$ is defined as the closest point to $\bar{x}$ in the set $X(t-1) := \{x \in \mathcal{X} : \psi^{t-1}(x) \le l\}$. By definition, the next iterate $x^{t+1}$ must satisfy $\psi^{t-1}(x^{t+1}) \le \psi^t(x^{t+1}) \le l$, which implies that $x^{t+1} \in X(t-1)$. Therefore, $\|x^{t+1} - \bar{x}\| \ge \|x^t - \bar{x}\|$. If the projection to compute $x^j$ becomes infeasible, we adopt the convention that $\|x^j - \bar{x}\| = +\infty$, and the monotonicity of the sequence $\{\|x^i - \bar{x}\|\}$ holds.

\textit{Part b):}
Let $\bar{t}$ be the iteration in which the algorithm returns a $W$-stationarity certificate. Since $f$ is convex, $\psi^{\bar{t}}(\bar{x}) = f(\bar{x})$. We need only show that the maximal descent of the model, $\psi^{\bar{t}}(\bar{x}) - \psi^{\bar{t}}(x)$, is bounded by $2\Delta$ within the ball $B(\bar{x}; \iota)$ for the corresponding radius $\iota \in \{\iota_{\max}, M, \|x^{m+1}-\bar{x}\|\}$.
\begin{itemize}
    \item If the return is triggered by infeasibility (Line~\ref{W-cert:line:infeasibility}), then $\psi^{\bar{t}}(x) \ge l$ for all $x \in X$. Thus, for a sufficiently large $\bigM$, we have $\max_{x \in \mathcal{X} \cap B(\bar{x}; \bigM)} (\psi^{\bar{t}}(\bar{x}) - \psi^{\bar{t}}(x)) \le f(\bar{x}) - l = 2\Delta$.
    \item If the return is triggered by exceeding the maximal radius (Line~\ref{W-cert:line:iomax}), then for any point $x \in B(\bar{x}; \iota_{\max})$, we must have $\psi^{\bar{t}}(x) > l$, otherwise the projection $x^{\bar{t}+1}$ would have $\|x^{\bar{t}+1}-\bar{x}\| \le \iota_{\max}$, a contradiction. Therefore, $\psi^{\bar{t}}(\bar{x}) - \psi^{\bar{t}}(x) \le f(\bar{x}) - l \le 2\Delta$ for all $x \in B(\bar{x}; \iota_{\max}) \cap X$. The argument for Line~\ref{W-cert:line:last} is analogous.
\end{itemize}

\textit{Part c):}
Since $k \le m$, the pigeonhole principle guarantees that among the $m+1$ points $\{x^i\}_{i=1}^{m+1}$, there exists a pair $(x^i, x^j)$ that lie on the same smooth piece. For this pair, the $(k,L,\delta)$-apx-PWS property implies
\[
f(x^j) \le \tilde{l}_f(x^j;x^i) + \frac{L}{2}\|x^i - x^j\|^2 + \delta.
\]
Given that $\delta \le \frac{\Delta}{6}$, the definition of the empirical smoothness constant $\tilde{L}$ yields
\begin{align*}
\tilde{L} &:= \min_{0 \le l < r \le m+1} \frac{2(f(x^r) - \tilde{l}_f(x^r;x^l) - \frac{\Delta}{6})}{\|x^l - x^r\|^2} \le \frac{2(f(x^j) - \tilde{l}_f(x^j;x^i) - \frac{\Delta}{6})}{\|x^j - x^i\|^2} \\
&\le \frac{2(f(x^j) - \tilde{l}_f(x^j;x^i) - \delta)}{\|x^j - x^i\|^2} \le L.
\end{align*}

\textit{Part d):}
Assume for the sake of contradiction that the algorithm returns `False', which means $\|\bar{x} - x^{m+1}\| < \frac{1}{2}\sqrt{\frac{\Delta}{\tilde{L}}}$. Let $(x^i, x^j)$ with $i<j$ be the pair of iterates that defines the empirical smoothness constant $\tilde{L}$. By the monotonicity established in part a), we have $\|x^i - \bar{x}\| \le \|x^j - \bar{x}\| \le \|x^{m+1} - \bar{x}\|$, which implies $\|x^i - x^j\| \le \|x^i - \bar{x}\| + \|x^j - \bar{x}\| \le \sqrt{\frac{\Delta}{\tilde{L}}}$.

The iterate $x^j$ is feasible for the projection subproblem defining $x^i$ (for $i < j$), so $\tilde{l}_f(x^j;x^i) \le l$. This leads to the following chain of inequalities:
\begin{align*}
f(x^j) &\le \tilde{l}_f(x^j;x^i) + \frac{\tilde{L}}{2}\|x^i - x^j\|^2 + \frac{\Delta}{6} \\
&\le l + \frac{\tilde{L}}{2}\left(\sqrt{\frac{\Delta}{\tilde{L}}}\right)^2 + \frac{\Delta}{6} \\
&= (f(\bar{x}) - 2\Delta) + \frac{\Delta}{2} + \frac{\Delta}{6} = f(\bar{x}) - \frac{4}{3}\Delta.
\end{align*}
Since $f(\bar{x}) - \frac{4}{3}\Delta < f(\bar{x}) - \Delta$, and we assumed $f(\bar{x}) - \Delta \le f^*$, this implies $f(x^j) < f^*$. This contradicts the definition of $f^*$ as the minimum objective value.
\qedsymbol
\end{proof}

\subsection{Proportional \tcW-Stationarity Certificate under the QG Setting}

The preceding lemma verifies that our proposed search method returns a valid \tcW-stationarity certificate when the estimated function value gap is well-specified. We now show that this certificate is also proportional to the gap estimate in the QG setting.

\begin{proposition}
\label{prop:proportional-W-ceritificate}
Given a convex and $(k,L,\delta)$-apx-PWS objective function $f$, consider running Algorithm~\ref{alg:W-certificate-search} with inputs $\bar{x}$, $\Delta$, $m$, and $\iota_{\max}=+\infty$. The output satisfies the following properties.
\begin{enumerate}
    \item[a)] If $\Delta$ is a valid upper bound on the optimality gap (i.e., $f(\bar{x})-f^* \le \Delta$), then the algorithm is guaranteed to return a \tcW-certificate from either Line~\ref{W-cert:line:infeasibility} or Line~\ref{W-cert:line:last}.
    
    \item[b)] If the algorithm terminates at Line~\ref{W-cert:line:infeasibility}, the returned \tcW-certificate implies an optimality gap bound of $2\Delta$.

    \item[c)] If the algorithm terminates at Line~\ref{W-cert:line:last} and the objective satisfies the $\mu$-QG condition, the returned certificate implies an optimality gap of at most $\max\{2, \frac{32\tilde{L}}{\mu}\}\Delta$, where $\tilde{L}$ is the empirical smoothness constant from Line~\ref{W-cert:line:empirical-sm}. If, in addition, $\delta \le \frac{1}{6}\Delta$ and $m \ge k$, this bound tightens to $\max\{2, \frac{32L}{\mu}\}\Delta$.
\end{enumerate}
\end{proposition}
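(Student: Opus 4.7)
The plan is to reduce all three parts to direct applications of Lemma~\ref{lem:W-certificate-str-meaningful} (which converts a \tcW-certificate into an optimality-gap bound) and Lemma~\ref{lem:W-certificate-validity} (which characterizes the possible outputs of Algorithm~\ref{alg:W-certificate-search}). Since $\iota_{\max}=+\infty$, the early-exit at Line~\ref{W-cert:line:iomax} cannot fire, so the only possible returns are the infeasibility branch (Line~\ref{W-cert:line:infeasibility}), the \textsf{False} branch (Line~\ref{W-cert:line:fail}), or the end-of-loop branch (Line~\ref{W-cert:line:last}).

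For part~(a), I would simply invoke Lemma~\ref{lem:W-certificate-validity}(d): when $f(\bar{x})-f^{*} \le \Delta$, the \textsf{False} branch is ruled out, leaving only the two claimed options. For part~(b), the returned certificate has parameters $(\bigM,\,2\Delta/\bigM)$ for a formally arbitrary radius $\bigM$. Since $\bigM$ can be taken large enough that the projection $x_{p}^{*}\in X^{*}$ lies inside $B(\bar{x};\bigM)$, I would apply the convexity-only Case~1 of the proof of Lemma~\ref{lem:W-certificate-str-meaningful} (which does not invoke QG) to obtain $f(\bar{x})-f^{*} \le \bigM\cdot(2\Delta/\bigM) = 2\Delta$.

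For part~(c), the certificate has parameters $(\iota,\,2\Delta/\iota)$ with $\iota = \|\bar{x}-x^{m+1}\|$. The key quantitative input is the lower bound $\iota \ge \frac{1}{2}\sqrt{\Delta/\tilde{L}}$, which must hold because the algorithm did not return \textsf{False} at Line~\ref{W-cert:line:fail}. Substituting $\nu = 2\Delta/\iota$ into Lemma~\ref{lem:W-certificate-str-meaningful} under the QG hypothesis yields the gap bound $\max\{\iota\nu,\,2\nu^{2}/\mu\} = \max\{2\Delta,\,8\Delta^{2}/(\mu\iota^{2})\}$, and the lower bound on $\iota^{2}$ converts the second argument into $32\tilde{L}\Delta/\mu$. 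Under the extra hypotheses $\delta \le \Delta/6$ and $m \ge k$, I would finally invoke Lemma~\ref{lem:W-certificate-validity}(c) to replace $\tilde{L}$ by $L$, yielding the sharpened bound $\max\{2,\,32L/\mu\}\Delta$. The proof will be mostly bookkeeping once Lemmas~\ref{lem:W-certificate-str-meaningful} and~\ref{lem:W-certificate-validity} are in hand; the one mildly delicate point I would be careful about is part~(b), where no QG hypothesis is available and the argument must rely on $\bigM$ being formally arbitrary so that Case~1 of Lemma~\ref{lem:W-certificate-str-meaningful}'s proof (which only uses convexity) applies.
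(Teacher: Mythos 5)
Your proposal is correct and follows essentially the same route as the paper: you invoke Lemma~\ref{lem:W-certificate-validity} to characterize the returned $(\iota,\nu)$ and then feed the parameters into Lemma~\ref{lem:W-certificate-str-meaningful} for the gap bound; parts~(a) and~(c) match the paper's proof verbatim, including the arithmetic $\iota \ge \tfrac{1}{2}\sqrt{\Delta/\tilde{L}} \Rightarrow 2\nu^2/\mu \le 32\tilde{L}\Delta/\mu$ and the final appeal to Lemma~\ref{lem:W-certificate-validity}(c). The one substantive difference is in part~(b). The paper plugs the degenerate pair $(\bigM,\,2\Delta/\bigM)$ into the full statement of Lemma~\ref{lem:W-certificate-str-meaningful} and then lets $\bigM \to \infty$ so the $\mu$-dependent term vanishes; this is formally slightly awkward because Lemma~\ref{lem:W-certificate-str-meaningful} carries a QG hypothesis that part~(b) of the proposition does not assert. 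Your variant instead notes that for $\bigM$ large enough $x_p^*$ lies inside $B(\bar{x};\bigM)$, so one can directly apply the convexity-only Case~1 of Lemma~\ref{lem:W-certificate-str-meaningful}'s proof and obtain $f(\bar{x})-f^* \le \iota\nu = 2\Delta$ with no QG assumption at all. That is a cleaner and slightly more general treatment, more faithful to the proposition's hypotheses, at the modest cost of reaching inside the lemma's proof rather than citing only its statement.
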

\begin{proof}
\textit{Part a)}: Since $\iota_{\max}=+\infty$, the termination condition in Line~\ref{W-cert:line:iomax} is never met. As the input $\Delta$ is a valid upper bound on the optimality gap, Lemma~\ref{lem:W-certificate-validity}.d) ensures that the algorithm will not return `False` from Line~\ref{W-cert:line:fail}. Therefore, the algorithm must terminate by returning a certificate from either Line~\ref{W-cert:line:infeasibility} or Line~\ref{W-cert:line:last}.

\textit{Part b)}: If the certificate is returned from Line~\ref{W-cert:line:infeasibility}, it is an $(\iota, \nu)$-\tcW-certificate with $\iota = \bigM$ (for some large $M$) and $\nu = 2\Delta/\bigM$. Plugging these into the bound from Lemma~\ref{lem:W-certificate-str-meaningful} gives:
\[
f(\bar{x})-f^* \le \lim_{\bigM\to+\infty} \max\left\{\bigM \frac{2\Delta}{\bigM}, \frac{2(2\Delta/\bigM)^2}{\mu}\right\} = 2\Delta.
\]

\textit{Part c)}: If the certificate is returned from Line~\ref{W-cert:line:last}, its parameters are $\iota = \|x^{m+1}-\bar{x}\|$ and $\nu=2\Delta/\iota$. The condition for termination in this line is $\iota \ge \frac{1}{2}\sqrt{\Delta/\tilde{L}}$, which implies $\nu \le 4\sqrt{\Delta\tilde{L}}$. Applying the bound from Lemma~\ref{lem:W-certificate-str-meaningful} yields:
\[
f(\bar{x})-f^* \le \max\left\{\iota\nu, \frac{2\nu^2}{\mu}\right\} \le \max\left\{2\Delta, \frac{2(4\sqrt{\Delta\tilde{L}})^2}{\mu}\right\} = \max\left\{2, \frac{32\tilde{L}}{\mu}\right\}\Delta.
\]
Furthermore, if $\Delta \ge 6\delta$ and $m \ge k$, Lemma~\ref{lem:W-certificate-validity}.c) guarantees that $\tilde{L} \le L$, which gives the implied upper bound. \qedsymbol
\end{proof}

A few remarks are in order regarding this result. First, Proposition~\ref{prop:proportional-W-ceritificate} demonstrates that the optimality gap guarantee implied by the \tcW-stationarity certificate is proportional to the gap estimate $\Delta$, up to the condition number $\sqrt{L/\mu}$. This satisfies the final requirement for a "reasonable" certificate from Definition~\ref{req:termination-condition}. Second, a key practical advantage is that to obtain this proportional optimality bound, the number of cuts $m$ only needs to be larger than the number of smooth pieces in the local neighborhood $B(\bar{x};\iota)$, rather than the total number of pieces globally. Finally, the result highlights why a bundle of cuts is necessary. If one sets $m=1$ and treats a general $M$-Lipschitz continuous function as a $(1, M^2/\Delta, \Delta)$-apx-PWS function, the empirical condition number $\tilde{L}/\mu$ can become arbitrarily large. In this case, the certificate value $\nu=O(\sqrt{\Delta\tilde{L}})$ can degrade to $O(M)$ even when the point $\bar{x}$ is near optimal. This is expected, as a single-cut model is equivalent to the subgradient norm, which cannot provide a proportional optimality guarantee for general non-smooth functions.

% A few remarks are in order regarding this result. First, the optimality
% gap guarantee implied by the $W$-stationarity criterion in Def \ref{def:W-stationary}
% is accurate (see Requirement \ref{req:termination-condition}) up
% to the condition number $\sqrt{L/\mu}$ under the QG
% setting. Therefore, we completed the last piece for demonstrating
% that the proposed $W$-stationarity meets the requirements in Definition
% \ref{req:termination-condition}. Second, to obtain a proportional
% optimality bound, the number of cuts $m$ need only to be chosen to
% be larger than the number of smooth pieces in some neighborhood around
% the evaluation center $\xbar$, $B(\xbar;\iota)$, rather than the
% global quantity. Third, the optimality guarantee implied by the generated
% $W$-stationarity certificate is accurate up to the inexactness parameter
% $\delta$. If we choose $m=1$, the trick of regarding a
% general non-smooth function ($M$-Lipschitz continuous) as an $(1,\frac{M^{2}}{\Delta},\Delta)$-approx
% PWS function is less useful because the condition number could blow
% up. In the worst case, we might get $\nu=O(1)\sqrt{\Delta\Ltil}=O(1)M$
% even when the evaluation point $\xbar$ is close to optimal. This
% is expected because a single cut model (the sub-gradient norm) could
% not provide a proportional optimality guarantee when the objective
% function is general non-smooth. 

\subsection{Proportional $W$-Stationarity Certificate under the Weakly Convex
Setting}

We now apply Algorithm~\ref{alg:W-certificate-search} to the proximal surrogate function to generate a \tcW-stationarity certificate for the non-convex setting that is proportional to the Moreau stationarity criterion.

\begin{proposition}
\label{prop:pr-WC-W-certificate}
Consider a $\rho$-weakly convex function $f$ and a point $\bar{x}$ that is approximately Moreau stationary, such that $F_{2\rho}(\bar{x};\bar{x})-\min_{x\in X}F_{2\rho}(x;\bar{x}) \le \Delta_{\bar{x}} = \epsilon^2/\rho$. When Algorithm~\ref{alg:W-certificate-search} is applied to the surrogate function $F_{2\rho}(\cdot;\bar{x})$ with inputs $(\bar{x}, \Delta_{\bar{x}}, m, \iota_{\max}=\sqrt{\Delta_{\bar{x}}/\rho})$, the returned $(\iota,\nu)$-\tcW-certificate has the following properties.
\begin{enumerate}
    \item[a)] The certificate parameters satisfy $\nu \ge 2\iota\rho$ and $\nu \le \max\{2\epsilon, 4\sqrt{\tilde{L}/\rho}\epsilon\}$, where $\tilde{L}$ is the empirical smoothness constant from Line~\ref{W-cert:line:empirical-sm}.
    \item[b)] The same search points also constitute a $(\iota, 2\nu)$-\tcW-stationarity certificate for the original objective function $f$.
    \item[c)] If the surrogate function $F_{2\rho}(\cdot;\bar{x})$ is a $(k,L,\delta)$-apx-PWS function with $m \ge k$ and $\delta \le \Delta_{\bar{x}}/6$, then the empirical smoothness is bounded by $\tilde{L} \le L$.
\end{enumerate}
\end{proposition}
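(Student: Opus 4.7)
The plan is to leverage the fact that the surrogate $F_{2\rho}(\cdot;\bar{x})$ is $\rho$-strongly convex (since $f$ is $\rho$-weakly convex), so that it satisfies the QG condition with modulus $\rho$ and fits the framework of Algorithm~\ref{alg:W-certificate-search} analyzed in the previous subsection. By the hypothesis, the input $\Delta_{\bar{x}} = \epsilon^2/\rho$ is a valid upper bound on the surrogate's optimality gap, so Lemma~\ref{lem:W-certificate-validity}(d) precludes a False return. Part (c) is then an immediate application of Lemma~\ref{lem:W-certificate-validity}(c) to the $(k,L,\delta)$-apx-PWS surrogate under the assumptions $m\ge k$ and $\delta \le \Delta_{\bar{x}}/6$.

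For part (a), since $\iota_{\max}=\sqrt{\Delta_{\bar{x}}/\rho}=\epsilon/\rho$ is finite, the algorithm terminates either at Line~\ref{W-cert:line:iomax} with $(\iota,\nu)=(\epsilon/\rho,\,2\epsilon)$, or at Line~\ref{W-cert:line:last} with $\iota\le\iota_{\max}$ and $\nu=2\Delta_{\bar{x}}/\iota\le 4\sqrt{\Delta_{\bar{x}}\tilde{L}}=4\epsilon\sqrt{\tilde{L}/\rho}$ (by Lemma~\ref{lem:W-certificate-validity}(b)), yielding the upper bound $\nu\le\max\{2\epsilon,\,4\epsilon\sqrt{\tilde{L}/\rho}\}$. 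In both termination cases $\nu=2\Delta_{\bar{x}}/\iota$, so the lower bound $\nu\ge 2\iota\rho$ is equivalent to $\iota^{2}\le \Delta_{\bar{x}}/\rho = \iota_{\max}^{2}$, which holds by construction.

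The heart of the argument is part (b), where I plan to quantify the discrepancy between the bundle models $\psi_F$ (for the surrogate) and $\psi_f$ (for $f$) on $B(\bar{x};\iota)$. Expanding $\tilde{l}_F(x;x^i) = F_{2\rho}(x^i;\bar{x}) + \langle \nabla_y F_{2\rho}(x^i;\bar{x}),\,x-x^i\rangle$ and completing the square yields the identity
\[ \tilde{l}_F(x;x^i) - \tilde{l}_f(x;x^i) \;=\; \rho\|x-\bar{x}\|^{2} - \rho\|x-x^i\|^{2}. \]
Since $\nabla_x F_{2\rho}(\bar{x};\bar{x})=f'(\bar{x})$, the center support also satisfies $l_F(x;\bar{x}) = l_f(x;\bar{x})$. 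For $x, x^i\in B(\bar{x};\iota)$, the identity gives $\tilde{l}_f(x;x^i)\ge \tilde{l}_F(x;x^i)-\rho\iota^{2}$ (using $\|x-\bar{x}\|^{2}\le\iota^{2}$ and $\|x-x^i\|^{2}\ge 0$); taking the maximum over $i$ together with the equal center supports delivers $\psi_f(x)\ge \psi_F(x) - \rho\iota^{2}$. Combined with $\psi_f(\bar{x})=\psi_F(\bar{x})=f(\bar{x})$ and the $(\iota,\nu)$-\tcW-certificate property for the surrogate, we obtain $\iota\,\mathcal{V}_\iota^f(\bar{x}) \le \iota\nu + \rho\iota^{2}$. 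The main obstacle---and the reason the radius $\iota_{\max}=\sqrt{\Delta_{\bar{x}}/\rho}$ is calibrated so precisely---is to absorb the slack $\rho\iota$ into $\nu$; fortunately, the lower bound $\nu\ge 2\iota\rho$ established in part (a) yields $\rho\iota\le\nu/2$, hence $\mathcal{V}_\iota^f(\bar{x}) \le (3/2)\nu \le 2\nu$, which is exactly the claimed $(\iota,2\nu)$-\tcW-stationarity certificate for $f$.
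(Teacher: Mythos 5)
Your proof is correct and follows the same overall structure as the paper's: part (a) uses $\iota\le\iota_{\max}$ to get the lower bound and the two termination cases of Algorithm~\ref{alg:W-certificate-search} for the upper bound; part (c) is a direct application of Lemma~\ref{lem:W-certificate-validity}(c). The one place where you genuinely depart from the paper is part (b). The paper cites the bound from the proof of Lemma~\ref{lem:W-certificate-to-M-stationarity}, namely that the bundle models of $f$ and of $F_{2\rho}$ differ by at most $2\rho\iota^{2}$ on $B(\bar{x};\iota)$, and then absorbs the slack using $\nu\ge 2\iota\rho$. You instead derive the exact identity
\[
\tilde l_{F}(x;x^{i})-\tilde l_{f}(x;x^{i})=\rho\|x-\bar{x}\|^{2}-\rho\|x-x^{i}\|^{2},
\]
use only the one-sided consequence $\psi_f(x)\ge\psi_F(x)-\rho\iota^{2}$ together with the exact equality $\psi_f(\bar{x})=\psi_F(\bar{x})=f(\bar{x})$ (since the cut at the center coincides for $f$ and $F_{2\rho}$), and obtain the slightly sharper bound $\mathcal{V}_\iota^f\le\nu+\rho\iota\le\tfrac32\nu\le 2\nu$. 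This is cleaner and more self-contained than the paper's cross-reference — it makes explicit why only a one-sided model comparison is needed and avoids the paper's conservative factor of $2$ on $\rho\iota^{2}$ — while of course still proving the stated $(\iota,2\nu)$ certificate.
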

\begin{proof}
For simplicity, let $P_{\bar{x}}(x) := F_{2\rho}(x;\bar{x})$.

\textit{Part a)}: Since $\Delta_{\bar{x}}$ is an upper bound on the optimality gap of the convex function $P_{\bar{x}}$, Lemma~\ref{lem:W-certificate-validity} ensures that Algorithm~\ref{alg:W-certificate-search} successfully returns a \tcW-certificate for $P_{\bar{x}}$. The condition $\iota \le \iota_{\max}$ implies $\nu/\iota = 2\Delta_{\bar{x}}/\iota^2 \ge 2\Delta_{\bar{x}}/\iota_{\max}^2 = 2\rho$, which gives the lower bound $\nu \ge 2\iota\rho$. For the upper bound on $\nu$, we consider two cases for how the algorithm terminates. If termination occurs at $\iota = \iota_{\max}$, then $\nu = 2\Delta_{\bar{x}}/\iota_{\max} = 2\sqrt{\rho\Delta_{\bar{x}}} = 2\epsilon$. Otherwise, termination implies $\iota \ge \frac{1}{2}\sqrt{\Delta_{\bar{x}}/\tilde{L}}$, which gives the bound $\nu = 2\Delta_{\bar{x}}/\iota \le 4\sqrt{\tilde{L}\Delta_{\bar{x}}} \le 4\sqrt{\tilde{L}/\rho}\epsilon$. Combining these cases gives the result.

\textit{Part b)}: We now translate the certificate for $P_{\bar{x}}$ to one for $f$. Let $\psi_{P_{\bar{x}}}$ and $\psi_f$ be the respective model functions. From the proof of Lemma~\ref{lem:W-certificate-to-M-stationarity}, we know that $\max_{x\in B(\bar{x};\iota)}(\psi_f(\bar{x})-\psi_f(x)) \le \iota\nu + 2\iota^2\rho$. Using the lower bound from part (a), $\nu \ge 2\iota\rho$, we have:
\[
\max_{x\in B(\bar{x};\iota)}(\psi_f(\bar{x})-\psi_f(x)) \le \iota\nu + \iota(2\iota\rho) \le \iota\nu + \iota\nu = \iota(2\nu).
\]
Thus, the search points form a valid $(\iota, 2\nu)$-\tcW-stationarity certificate for the original function $f$.

\textit{Proof of (c).} This follows directly from Lemma~\ref{lem:W-certificate-validity}(c). Since the surrogate function $F_{2\rho}$ is $(k,L,\delta)$-apx-PWS and the condition $\delta \le \Delta_{\bar{x}}/6$ holds, the lemma guarantees that the empirical smoothness constant $\tilde{L}$ computed by the algorithm is bounded by the true constant $L$.
\end{proof}

Two remarks are in order. First, this result, combined with Lemma~\ref{lem:W-certificate-to-M-stationarity}, shows that our generated \tcW-certificate is proportional to the Moreau stationarity measure, up to a factor related to the condition number $\sqrt{L/\rho}$. Second, the guarantee is reliable up to the inexactness parameter $\delta$; if the target gap $\Delta_{\bar{x}}$ is smaller than $6\delta$, the empirical smoothness constant $\tilde{L}$ may no longer be bounded by $L$.

\section{Almost Parameter-Free Bundle Level Method}\label{sec:pf}

In this section, we use the proposed \tcW-stationary certificate to design almost parameter-free algorithms for convex QG problems and weakly convex problems. Since the \tcW-certificate is both verifiable and proportional (see Definition~\ref{req:termination-condition}), we use it to track if an algorithm is making the desired progress and adjust the algorithm accordingly in an adaptive fashion. This allows our methods to take advantage of unknown growth conditions in the apx-PWS non-smooth setting.

\subsection{Convex Problems with an Unknown QG Constant}

\begin{algorithm}
\caption{Almost Parameter-Free BL method (pfBL-$\mu$) for Convex QG Objectives}
\label{alg:pf-BL-u}
\begin{algorithmic}[1]
\Require An initial point $x^0\in X$; an upper bound on the QG modulus, $\tilde{\mu}$; the number of cuts $m$.

\State Initialize: $\bar{f}^0 \gets f(x^0)$, $\underline{f}^0 \gets f(x^0) - \frac{2\|f'(x^0)\|^2}{\tilde{\mu}}$, $\tau \gets 0$, and $\Delta_0 \gets \bar{f}^0 - \underline{f}^0$.

\For{$s=0, 1, 2, \dots$}
    \State \label{pfBL:line:GR} Run $(x^{s+1}, \bar{f}^{s+1}, \underline{f}^{s+1}) \leftarrow \GR(\tilde{\mu}, x^s, \bar{f}^s, \underline{f}^s)$ with objective $f$ and  set $\Delta_{s+1} \gets \bar{f}^{s+1} - \underline{f}^{s+1}$.

    \If{\label{pfBL:line:half-flag}$\Delta_{s+1} \le \frac{1}{2}\Delta_{\tau}$} 
        \State Set $\tau \gets s+1$.
        \State \label{pfBL:line:W-cert}Run Algorithm~\ref{alg:W-certificate-search} on $f$ with inputs $(\bar{x}^{\tau}, \Delta_{\tau}, m, +\infty)$ to generate a \tcW-stationarity certificate.
        \State \label{pfBL:line:W-cert-fail} \textbf{if} Algorithm~\ref{alg:W-certificate-search} returns \textit{False}, restart Algorithm~\ref{alg:pf-BL-u} with inputs $(x^{s+1}, \tilde{\mu}/2, m)$.
    \EndIf
\EndFor
\end{algorithmic}
\end{algorithm}

We first consider the convex setting where the QG constant $\mu$ is unknown and the goal is to minimize the function value gap. The proposed method, Algorithm~\ref{alg:pf-BL-u}, produces a sequence of iterates $\{x^s\}$ whose objective values converge to $f^*$. Its implementation requires only an initial point $x^0$, the number of cuts $m$, and an initial upper bound on the QG constant, $\tilde{\mu}$. As discussed in Lemma~\ref{lem:initial_lower-bound}(b), this upper bound can be computed efficiently, making the method very practical.

More concretely, Algorithm~\ref{alg:pf-BL-u} employs a "guess-and-check" strategy. It utilizes the $\mu$-BL method (Algorithm~\ref{alg:blm-u}) with a guessed QG constant $\tilde{\mu}$ to shrink the optimality gap $\Delta_s$. Once the gap halves (Line~\ref{pfBL:line:half-flag}), it runs the \tcW-certificate search algorithm (Line~\ref{pfBL:line:W-cert}) to verify this progress. A successful verification implies that progress has been made. If the search algorithm returns \textit{False}, it signals that the guess $\tilde{\mu}$ was too large. In this latter case (Line~\ref{pfBL:line:W-cert-fail}), the algorithm restarts the entire scheme with a halved guess, $\tilde{\mu}/2$. The following theorem establishes the validity of this adaptive procedure by bounding the number of iterations required to find an $\epsilon$-optimal solution.

\begin{theorem}
\label{thm:pf_QG-complexity}
Consider an $M$-Lipschitz continuous convex objective function $f$ that satisfies the QG condition with modulus $\mu>0$. When Algorithm~\ref{alg:pf-BL-u} is run with inputs $(x^0, \tilde{\mu}, m)$, where $\tilde{\mu}$ is an initial upper bound on $\mu$, it generates an $\epsilon$-optimal solution. The number of restarts, $Q$, is bounded by $Q \le O(1)\ceil{\log(\tilde{\mu}/\mu)}$. Moreover, the algorithm's efficiency is characterized as follows:
\begin{enumerate}
    \item[a)] Let $(\bar{L}_{q,s}, \bar{\kappa}_{q,s}, \bar{\sigma}_{q,s})$ be the empirical statistics from the $s$-th GR call within the $q$-th restart (see Definition \ref{def:empirical-matching-sequence}). Let $\bar{L}, \bar{\kappa}, \bar{\sigma}$ be the maximum of these statistics, and let $\hat{L}$ be the maximal empirical smoothness constant observed during calls to Algorithm~\ref{alg:W-certificate-search}\footnote{If Line~\ref{W-cert:line:empirical-sm} is never invoked, $\hat{L}$ is regarded as zero.} before an $\epsilon$-optimal solution is generated. The total number of required oracle evaluations is bounded by:
    \[
    O(1)\left(\ceil{\frac{\bar{L}\bar{\kappa}\bar{\sigma}}{\mu}}+m\ceil{\log\frac{\tilde{\mu}}{\mu}}\right)\ceil{\log\frac{M^2\max\{\hat{L},\tilde{\mu}\}}{\mu\epsilon}}.
    \]

    \item[b)] If $f$ is also a $(k,L,\delta)$-apx-PWS function with $k \le m$ and $\delta \le \frac{\epsilon\mu}{200L}$, there exists a choice of $m$-pair sequence such that the empirical constants are bounded by the true constants, i.e., $\tilde{L} \le L$ and $\hat{L} \le L$.
\end{enumerate}
\end{theorem}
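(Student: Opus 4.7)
The plan is to view Algorithm~\ref{alg:pf-BL-u} as Algorithm~\ref{alg:blm-u} (the $\mu$-BL method) run with a possibly wrong guess $\tilde{\mu}$, augmented by Algorithm~\ref{alg:W-certificate-search} acting as a consistency check that detects over-estimation of $\mu$ and triggers a halving of the guess. Correctness and oracle complexity then follow by combining three already-proved ingredients: the per-call cost bound for the GR subroutine (Lemma~\ref{lem:GR-subroutine}), the soundness of the W-certificate search (Lemma~\ref{lem:W-certificate-validity}), and the proportional optimality bound (Proposition~\ref{prop:proportional-W-ceritificate}).

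I would first bound the number of restarts by arguing that no restart can be triggered once $\tilde{\mu}\le\mu$. Since a $\mu$-QG function is also $\tilde{\mu}$-QG for any $\tilde{\mu}\le\mu$, Lemma~\ref{lem:GR-subroutine}(a) with input $\tilde{\mu}$ produces a valid lower bound $\underline{f}^{s+1}\le f^*$ at every GR call, so the running gap $\Delta_s=\bar{f}^s-\underline{f}^s$ honestly upper-bounds $f(\bar{x}^s)-f^*$. Lemma~\ref{lem:W-certificate-validity}(d) then rules out a False return at Line~\ref{pfBL:line:W-cert}, so no restart fires. Contrapositively, each restart halves the current (over-)estimate of $\mu$, so at most $Q\le\lceil\log_2(\tilde{\mu}/\mu)\rceil$ restarts can occur, and the final guess lies in $(\mu/2,\mu]$.

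Next, inside any restart with guess $\tilde{\mu}^{(q)}$, each GR call uses at most $O(\bar{L}\bar{\kappa}\bar{\sigma}/\tilde{\mu}^{(q)})$ oracle evaluations by Lemma~\ref{lem:GR-subroutine}(b), and each W-certificate call uses at most $m+1$ oracle evaluations by direct inspection of Algorithm~\ref{alg:W-certificate-search}. The W-check only fires when the gap halves. Proposition~\ref{prop:proportional-W-ceritificate}(c) shows that a W-certificate obtained at gap $\Delta$ translates to an optimality gap bound of at most $\max\{2,32\hat{L}/\mu\}\Delta$, so $O(\log(M^2\max\{\hat{L},\tilde{\mu}\}/(\mu\epsilon)))$ halvings suffice in total to reach $\epsilon$-optimality with a verifiable certificate. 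In the final restart, $\tilde{\mu}^{(Q)}\in(\mu/2,\mu]$ yields the leading GR cost $O(\bar{L}\bar{\kappa}\bar{\sigma}/\mu)$ per outer iteration; failed restarts operate with larger $\tilde{\mu}^{(q)}$ and therefore cheaper GR calls ($1/\tilde{\mu}^{(q)}<2/\mu$), so the GR side stays as in the first summand of the claimed bound. The W-check side accumulates $O(m)$ per halving across $O(Q)$ restarts, producing the $m\lceil\log(\tilde{\mu}/\mu)\rceil$ summand.

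Finally, part (b) follows once one checks that the operating gap inside both the GR subroutine and Algorithm~\ref{alg:W-certificate-search} stays at least $6\delta$ throughout the entire run, which is exactly why the hypothesis $\delta\le\epsilon\mu/(200L)$ is imposed; the factor $L/\mu$ absorbs the slack from Proposition~\ref{prop:proportional-W-ceritificate}(c). Under this condition, Lemma~\ref{lem:GR-subroutine}(c) and Lemma~\ref{lem:W-certificate-validity}(c) each supply an $m$-pair sequence aligned with the underlying PWS pieces via pigeonhole, yielding $\bar{L}\le L$ and $\hat{L}\le L$, respectively. The main obstacle will be the bookkeeping in the third paragraph: one must argue that the oracle work accumulated across the failed restarts does not compound into a multiplicative $\log(\tilde{\mu}/\mu)$ factor on the leading GR summand, which relies crucially on the fact that failed restarts operate with larger $\tilde{\mu}^{(q)}$ (hence proportionally cheaper GR calls) while all restarts share one common halving budget fixed by the $\epsilon$-target.
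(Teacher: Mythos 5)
Your proposal is correct and follows essentially the same route as the paper: it invokes Lemma~\ref{lem:GR-subroutine}, Lemma~\ref{lem:W-certificate-validity}(d), and Proposition~\ref{prop:proportional-W-ceritificate} in the same roles (restart count, per-phase GR cost, proportional termination threshold, and the $\delta$ condition for part (b)), and obtains the stated bound by summing work over restart phases. The one place you flag as delicate---that the geometrically decaying per-GR cost across failed restarts prevents a compounded $\log(\tilde{\mu}/\mu)$ factor on the leading summand---is exactly the bookkeeping the paper compresses into a single sentence, and your expansion of it is sound.
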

\begin{proof}
For clarity in the analysis, we index quantities by the restart counter $q$. Thus, $\tilde{\mu}_q$ is the guessed QG modulus and $x^{q,s}$ is the $s$-th iterate during the $q$-th restart phase.

First, we establish a sufficient condition for $\epsilon$-optimality. Let  $\bar{\Delta} := \mu\epsilon/(32\max\{\hat{L},\tilde{\mu}\})$ denote the target accuracy. If Algorithm~\ref{alg:W-certificate-search} is called with a gap $\Delta_{q,s} \le \bar{\Delta}$, then Proposition~\ref{prop:proportional-W-ceritificate} guarantees that the returned \tcW-certificate implies the solution is $\epsilon$-optimal, regardless of whether the search terminates due to infeasibility (Line~\ref{W-cert:line:infeasibility}) or after completing all iterations (Line~\ref{W-cert:line:last}).

Next, we calculate the number of iterations required to reach this gap $\bar{\Delta}$ within a fixed restart phase $q$. The initial gap is bounded by $\Delta_{q,0} = 2\|f'(x^{q,0})\|^2/\tilde{\mu}_q \le 2M^2/\tilde{\mu}_q$. The number of \tGR evaluations needed to shrink this gap to $\bar{\Delta}$ is at most $\ceil{\log(\frac{2M^2}{\tilde{\mu}_q\bar{\Delta}})/\log(3/2)}$. From Lemma~\ref{lem:GR-subroutine}(b), this corresponds to a total of
\[
\ceil{\log\frac{2M^2}{\tilde{\mu}_q\bar{\Delta}}/\log(3/2)} \left(\ceil{\frac{3\bar{\kappa}\bar{\sigma}\tilde{L}}{\tilde{\mu}_q}}+m+1\right)
\]
gradient evaluations for this phase, if it is not interrupted by a restart.

The algorithm stops restarting once $\tilde{\mu}_q \le \mu$. The number of gradient evaluations in this final, successful phase is therefore upper bounded by $\ceil{\log(\frac{4M^2}{\mu\bar{\Delta}})/\log(3/2)}(\ceil{\frac{4\bar{\kappa}\bar{\sigma}\tilde{L}}{\mu}}+m+1)$. Summing the work from this final phase with the work from all prior, aborted phases yields the total oracle complexity bound stated in the theorem.

Finally, for part (b), the condition $\delta \le \frac{\epsilon\mu}{200L}$ ensures that for any relevant gap $\Delta \ge \bar{\Delta}$, we have $6\delta \le \Delta$. This allows the application of Lemma~\ref{lem:GR-subroutine}.c) and Lemma~\ref{lem:W-certificate-validity}.c), which together provide the claimed bound on $\tilde{L}$. \qedsymbol
\end{proof}

A few remarks regarding this complexity result are in order. First, for a $(k,L,\delta)$-PWS function, the oracle complexity is dominated by the term $O(\frac{Lk^2}{\mu}\log(\frac{1}{\epsilon}))$. This matches the complexity of Algorithms~\ref{alg:blm-u} and~\ref{alg:blm-fstarknown-approx}, but removes the need to know the QG parameter $\mu$ or the optimal value $f^*$ beforehand. This adaptivity to the true curvature is a significant practical advantage. In particular, if the feasible region $\mathcal{X}$ is bounded by a diameter $D_X$, the algorithm achieves an oracle complexity of $O(\frac{Lk^2 D_X^2}{\epsilon}\log(\frac{1}{\epsilon}))$ even if the QG condition does not hold globally. Second, the proposed method is an anytime algorithm, as it does not require the target accuracy $\epsilon$ as an input. It is guaranteed to converge quickly until the optimality gap is on the order of the inexactness parameter $\delta$. Beyond this point, the empirical Lipschitz constant $\tilde{L}$ may grow, leading to slower convergence, as established in Theorem~\ref{thm:pf_QG-complexity}.

\subsection{Nonconvex Problems with an Unknown Weak Convexity Constant}

\begin{algorithm}
\caption{An Almost Parameter-Free IPPM for $\rho$-Weakly Convex Problems}
\label{alg:pf-IPPM}
\begin{algorithmic}[1]
\Require Initial point $\bar{x}^0 \in X$; initial guess for the weak convexity modulus, $\tilde{\rho}>0$; number of cuts $m$; stationarity requirement $\epsilon>0$.
\Ensure A solution $\hat{x}$ and an associated $\epsilon$-\tcW-stationarity certificate.

\For{$s=0, 1, 2, \dots$}
    \parState{Set up the subproblem and initialize its state: $P_{s}(x) := f(x) + \tilde{\rho}\|x-\bar{x}^{s}\|^2$; $\bar{P}^{0} \gets f(\bar{x}^{s})$; $\hat{x}^{0} \gets \bar{x}^{s}$; and $\underline{P}^{0} \gets \bar{P}^{0} - \|f'(\bar{x}^{s})\|^2 / (2\tilde{\rho})$.}
    
    \For{$i=1, 2, 3, \dots$}
        \State $(\hat{x}^{i}, \bar{P}^{i}, \underline{P}^{i}) \leftarrow \GR(\tilde{\rho}, \hat{x}^{i-1}, \bar{P}^{i-1}, \underline{P}^{i-1}, m)$ with objective $P_s(x)$.
        
        \If{\label{pfPM:line:opt-halve-trigger}$\bar{P}^{0} - P_{s}(\hat{x}^{i}) \ge P_{s}(\hat{x}^{i}) - \underline{P}^{i}$}
            \State Set $\bar{x}^{s+1} \gets \hat{x}^{i}$ and $\Delta_s \gets \bar{P}^{0} - \underline{P}^{i}$.
            \State \label{pfPM:line:W-cert} Run Algorithm~\ref{alg:W-certificate-search} on $P_s$ with inputs $(\bar{x}^s, \Delta_s, m, \sqrt{\Delta_s/\tilde{\rho}})$ to generate an $(\iota_s, \nu_s)$-\tcW-certificate.
            \parState{Evaluate the corresponding $(\iota_s, \nu_s^+)$-\tcW-certificate associated with the same search points for the original function $f$ using Definition~\ref{def:W-stationary}.}
            
            \If{\label{pfPM:line:restart-trigger}Algorithm~\ref{alg:W-certificate-search} returns \textit{False} or $\nu_s^+ \ge 2\nu_s$}
                \State \textit{Restart with a new guess for $\rho$:} Call Algorithm~\ref{alg:pf-IPPM} with inputs $(\bar{x}^{s+1}, 2\tilde{\rho}, m, \epsilon)$.
            \ElsIf{$\nu_s^+ \le \epsilon$}
                \State \label{pfPM:line:return} \Return{$\bar{x}^s$ and its $(\iota_s, \nu_s^+)$-\tcW-stationarity certificate.}
            \Else
                \State \textit{break} \quad  \textit{(Continue to the next outer iteration s+1.)}
            \EndIf
        \EndIf
    \EndFor
\EndFor
\end{algorithmic}
\end{algorithm}

We now consider the non-convex apx-PWS setting where the weak convexity modulus $\rho$ is unknown. Using the IPPM method (Algorithm~\ref{alg:IPPM}) with a misspecified constant $\tilde{\rho}$ poses two significant challenges. First, if $\tilde{\rho} < \rho$, the proximal subproblems may not be convex, rendering the lower bounds from the \tGR subroutine unreliable. Second, and more critically, there is no guarantee that descent on the surrogate problem is proportional to some stationarity
measure  for the true objective. This invalidates the descent argument used in Theorem~\ref{thm:wc-complexity}. This issue has been particularly challenging in the literature due to the lack of verifiable and proportional stationarity measures for non-smooth, non-convex problems.

We address this issue by using our proposed \tcW-stationarity certificate as the measure of progress. The key idea is that the \tcW-certificate can be computed without knowing $\rho$. We can therefore check if the observed descent in the proximal subproblem is proportional to the generated certificate. If it is not, we can infer that our guess $\tilde{\rho}$ is misspecified and adapt it accordingly.

This ``guess-and-check'' strategy is implemented in Algorithm~\ref{alg:pf-IPPM}. The method takes an initial point $x^0$, an initial guess for the weak convexity constant $\tilde{\rho}$, the number of cuts $m$, and a stationarity requirement $\epsilon$. It proceeds by running the IPPM method with the current guess $\tilde{\rho}$. After sufficient progress is made on a subproblem (Line~\ref{pfPM:line:opt-halve-trigger}), it calls the \tcW-certificate search subroutine. It then checks if the computed certificate is consistent with the observed progress (Line~\ref{pfPM:line:W-cert}). If not, this indicates that $\tilde{\rho}$ was too small, so the algorithm restarts with a doubled guess, $2\tilde{\rho}$. The efficiency of this adaptive method is established in the next theorem.

\begin{theorem}
\label{thm:pf-WC-complexity}
Consider an $M$-Lipschitz continuous, $\rho$-weakly convex objective function $f$ that is bounded from below with $f(\xt 0)-\min_{x\in X}f(x)\leq \Delta_f<+\infty$. Let Algorithm~\ref{alg:pf-IPPM} be run with inputs $(x^0, \tilde{\rho}_0, m, \epsilon)$, where $\tilde{\rho}_0 > 1$ is an initial guess for the weak convexity modulus. The algorithm generates an $(\iota,\nu)$-\tcW-stationary point $\bar{x}$ with $\nu \le \epsilon$ and $\iota \le \epsilon$.

Moreover, let $\rho_{\max} := \max\{\tilde{\rho}_0, \rho\}$. The efficiency of the algorithm is characterized as follows:
\begin{enumerate}
    \item[a)] Let $(\bar{L}_{q,s,i}, \bar{\kappa}_{q,s,i}, \bar{\sigma}_{q,s,i})$ be the empirical statistics from the $i$-th \tGR call for the $s$-th subproblem within the $q$-th restart. Let $\bar{L}, \bar{\kappa}, \bar{\sigma}$ be the maximum of these statistics over the entire run, and let $\hat{L}$ be the maximal empirical smoothness from calls to Algorithm~\ref{alg:W-certificate-search}.\footnote{If Line~\ref{W-cert:line:empirical-sm} is never invoked, $\hat{L}$ is taken to be zero.} The total number of oracle evaluations is bounded by:
    \[
    O(1)\frac{\Delta_f(\bar{L}+\rho_{\max})}{\epsilon^2}  \left(\ceil{\frac{\bar{L}\bar{\kappa}\bar{\sigma}}{\tilde{\rho}_0}}\ceil{\log\frac{M(\hat{L}+\rho_{\max})}{\epsilon}} + m\log\frac{\rho_{\max}}{\tilde{\rho}_0}\right),
    \]
    where the $m$-pair statistics satisfy $\bar{\kappa} \le 2m$ and $\bar{\sigma} \le m$.

    \item[b)] If $f$ is also a $(k,L,\delta)$-apx-PWS function with $k \le m$ and $\delta \le \frac{\epsilon^2}{2500(\rho_{\max}+L)}$, then a $m$-pair sequence can be chosen such that the complexity bound in a) holds with the empirical constants bounded by the true problem parameters, i.e., $\bar{L} \le L+2\rho_{\max}$ and $\hat{L} \le L+2\rho_{\max}$.
\end{enumerate}
\end{theorem}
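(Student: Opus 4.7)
The plan is to decompose the execution of Algorithm~\ref{alg:pf-IPPM} into a sequence of restart phases indexed by $q$, where phase $q$ runs with guessed modulus $\tilde{\rho}_q = 2^{q}\tilde{\rho}_0$. A phase ends either by firing the restart trigger in Line~\ref{pfPM:line:restart-trigger} or by returning at Line~\ref{pfPM:line:return}. My first step is to show that once $\tilde{\rho}_q \ge \rho$, no further restart can occur: in this regime $P_s$ is genuinely convex, so Algorithm~\ref{alg:W-certificate-search} cannot return False by Lemma~\ref{lem:W-certificate-validity}(d), and Proposition~\ref{prop:pr-WC-W-certificate}(b) guarantees $\nu_s^+ \le 2\nu_s$. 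This bounds the total number of restart phases by $Q = O(\log(\rho_{\max}/\tilde{\rho}_0))$.

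The second step is to analyze the final, correct-regime phase by mimicking Theorem~\ref{thm:wc-complexity}. The trigger in Line~\ref{pfPM:line:opt-halve-trigger} enforces $P_s(\bar{x}^s)-P_s(\bar{x}^{s+1}) \ge \Delta_s/2$, which by the definition of $P_s$ translates into a descent of at least $\Delta_s/2$ on the original $f$. Meanwhile, the algorithm must return as soon as $\Delta_s$ drops below $O(\epsilon^2/(\bar{L}+\rho_{\max}))$, because at that point Proposition~\ref{prop:pr-WC-W-certificate}(a) forces $\nu_s \le \epsilon/2$, hence $\nu_s^+ \le \epsilon$, while the companion radius bound $\iota_s \le \iota_{\max} = \sqrt{\Delta_s/\tilde{\rho}} \le \epsilon$ follows from the assumption $\tilde{\rho}_0 > 1$. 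Telescoping the per-step descent against the total budget $\Delta_f$ yields $S = O(\Delta_f(\bar{L}+\rho_{\max})/\epsilon^2)$ outer iterations in the final phase, and Lemma~\ref{lem:GR-subroutine}(b) bounds the inner \tGR cost per outer iteration by $O(\lceil\bar{L}\bar{\kappa}\bar{\sigma}/\tilde{\rho}\rceil\lceil\log(M(\hat{L}+\rho_{\max})/\epsilon)\rceil)$, plus an additional $m$ oracle calls for each \tcW-certificate search.

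Part (b) then follows by applying Lemma~\ref{lem:GR-subroutine}(c) and Lemma~\ref{lem:W-certificate-validity}(c) to the surrogate $P_s$, which inherits a $(k, L+2\tilde{\rho}, \delta)$-apx-PWS structure from $f$. The assumed bound $\delta \le \epsilon^2/(2500(\rho_{\max}+L))$ is chosen precisely so that $6\delta$ remains below every gap $\Delta_s$ ever processed by the algorithm, which in turn forces the empirical smoothness constants to satisfy $\bar{L},\hat{L}\le L+2\rho_{\max}$.

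The main obstacle will be rigorously bounding the wasted work in the $Q$ misspecified phases. When $\tilde{\rho}_q < \rho$ the surrogate $P_s$ need not be convex, the $\underline{P}^i$'s produced by $\GR$ may fail to be valid lower bounds on $\min_x P_s(x)$, and the naive descent argument on $f$ breaks down. My plan is to argue that each misspecified phase triggers its restart within $O(1)$ outer iterations worth of work, because either the \tcW-search returns False at its first invocation or the transfer bound $\nu_s^+ \le 2\nu_s$ must fail along the way; this absorbs the misspecified work into the additive $m\log(\rho_{\max}/\tilde{\rho}_0)$ term in part~(a).
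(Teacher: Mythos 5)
Your overall phase decomposition, the termination argument for the correct-regime phase, and the use of Lemma~\ref{lem:GR-subroutine}(c) and Lemma~\ref{lem:W-certificate-validity}(c) for part~(b) all track the paper's proof. However, your plan for the misspecified phases is wrong in a way that matters.

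You assert that when $\tilde{\rho}_q < \rho$ ``the naive descent argument on $f$ breaks down,'' and propose to compensate by showing each misspecified phase restarts within $O(1)$ outer iterations. Neither half of this is right. The descent argument on $f$ does \emph{not} break down: the trigger in Line~\ref{pfPM:line:opt-halve-trigger} gives $P_s(\bar{x}^s) - P_s(\bar{x}^{s+1}) \ge \tfrac12 \bar{\Delta}_s$, and the chain $f(\bar{x}^s) = P_s(\bar{x}^s)$, $f(\bar{x}^{s+1}) \le P_s(\bar{x}^{s+1})$ holds purely by the nonnegativity of $\tilde{\rho}\|x-\bar{x}^s\|^2$, with no appeal to convexity of $P_s$ or validity of $\underline{P}^i$ as a lower bound on $\min_x P_s$. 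Hence $f(\bar{x}^s) - f(\bar{x}^{s+1}) \ge \tfrac12\bar{\Delta}_s$ in every phase. Likewise the upper bound $\nu_s \le 4\sqrt{\Delta_s \max\{\tilde{L},\tilde{\rho}_q\}}$ that forces the return at Line~\ref{pfPM:line:return} once $\Delta_s \le \Delta_\epsilon$ follows from the mechanics of Algorithm~\ref{alg:W-certificate-search} alone (Lines~\ref{W-cert:line:iomax}, \ref{W-cert:line:infeasibility}, \ref{W-cert:line:fail}, \ref{W-cert:line:last}), again without convexity of $P_s$. So \emph{every} phase --- misspecified or not --- is subject to the same $O(\Delta_f/\Delta_\epsilon)$ bound on the number of outer iterations, and there is no mechanism in the algorithm that cuts a misspecified phase short in $O(1)$ steps.

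The correct way to absorb the $Q = O(\log(\rho_{\max}/\tilde{\rho}_0))$ phases, and what the paper actually does, is to observe that the per-phase \tGR cost scales like $\ceil{\bar{L}\bar{\kappa}\bar{\sigma}/\tilde{\rho}_q}$ and that $\sum_q 1/\tilde{\rho}_q = \sum_q 1/(2^q\tilde{\rho}_0)$ is a convergent geometric series bounded by $O(1/\tilde{\rho}_0)$ --- this eliminates the $Q$ factor from the leading term. The certificate-search cost, which is $O(m)$ per outer iteration and cannot be amortized geometrically, is what contributes the additive $m\log(\rho_{\max}/\tilde{\rho}_0)$ term in part~(a). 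Your proposed route would never produce this structure and, since the $O(1)$-restart claim is unprovable, the proof would stall exactly where you flag the obstacle.
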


\begin{proof}
For the analysis, we label quantities by the restart counter $q$. For instance, $\tilde{\rho}_q$ is the guessed weak convexity modulus during the $q$-th restart phase.

First, we observe that the restart condition in Line~\ref{pfPM:line:restart-trigger} is not triggered if the guess is accurate, i.e., if $\tilde{\rho}_q \ge \rho$. Under this condition, the \tGR subroutine's lower bounds are well-specified (Lemma~\ref{lem:GR-subroutine}), so the \tcW-certificate search will not return \textit{False}. Furthermore, Proposition~\ref{prop:pr-WC-W-certificate} ensures that the generated certificate is consistent with the observed descent (i.e., $\nu_s^+ < 2\nu_s$).

Next, for any fixed $q$ and $s$, we show that achieving a gap $\Delta_{q,s} \le \Delta_{\epsilon} := \epsilon^2 / (144(\rho_{\max}+\hat{L}))$ is a sufficient condition for finding an $(\epsilon,\epsilon)$-\tcW-stationary point. Assuming the algorithm does not restart, Proposition~\ref{prop:pr-WC-W-certificate} shows that the generated $(\iota,\nu)$-\tcW-certificate for $f$ satisfies:
\begin{align*}
\iota \le \sqrt{\Delta_{\epsilon}/\tilde{\rho}_q} \quad \text{and} \quad \nu \le 12\sqrt{\max\{\hat{L},\rho_{\max}\}\Delta_{\epsilon}},
\end{align*}
which implies $\iota \le \epsilon$ and $\nu \le \epsilon$.

Now we bound the oracle complexity. For any fixed restart phase $q$, the algorithm must either terminate or restart before the subproblem gap $\Delta_{q,s}$ falls below $\Delta_{\epsilon}$. By the descent argument from Theorem~\ref{thm:wc-complexity}, the number of calls to the \tcW-certificate search subroutine within this phase is at most $\ceil{\Delta_f/\Delta_\epsilon}$. The number of oracle evaluations associated with the \tGR calls is bounded by an argument similar to that in Theorem~\ref{thm:wc-complexity}:
\[
O(1)\frac{\Delta_f}{\Delta_{\epsilon}}\ceil{\frac{\bar{L}\bar{\kappa}\bar{\sigma}}{\tilde{\rho}_q}}\ceil{\log\frac{M}{\Delta_{\epsilon}}} \le O(1)\frac{\Delta_f(\rho_{\max}+\hat{L})}{\epsilon^2}\ceil{\frac{\bar{L}\bar{\kappa}\bar{\sigma}}{\tilde{\rho}_q}}\log\left(\frac{M}{\epsilon}\right).
\]
The total complexity for phase $q$ is the sum of the cost of the certificate searches ($O(\ceil{\frac{\Delta_f(\rho_{\max}+\hat{L})}{\epsilon^2}}m)$) and the cost of the \tGR calls. Since the algorithm must terminate in the first phase $q$ where $\tilde{\rho}_q \ge \rho$, we can sum the complexity over all restart phases to recover the overall bound from the theorem statement.

Finally, part b) follows immediately from Lemma~\ref{lem:GR-subroutine}.c) and Lemma~\ref{lem:W-certificate-validity}.c). \qedsymbol
\end{proof}

% \paragraph{Remarks}
A few remarks regarding this result are in order. First, this is, to our knowledge, the first almost parameter-free algorithm for apx-PWS weakly convex optimization and the first to provide a verifiable termination criterion in the constrained setting. Second, for a $(k,L,\delta)$-apx-PWS function, the oracle complexity is worse than that of Algorithm~\ref{alg:IPPM} (with known $\rho$) by a factor of $\ceil{L/\tilde{\rho}_0}$. This dependence arises because the \tcW-certificate's proportionality to the true error depends on this condition number (see Propositions~\ref{prop:proportional-W-ceritificate} and \ref{prop:pr-WC-W-certificate}), which requires solving the subproblems to a higher accuracy to verify progress. This suggests a practical guideline: it is likely better to choose an initial guess $\tilde{\rho}_0$ that is closer to the Lipschitz smoothness constant $L$, rather than a smaller value. Finally, the method can be viewed as an anytime algorithm. If the termination condition in Line \ref{pfPM:line:return} of Algorithm~\ref{alg:pf-IPPM} is removed, the method will continuously run to find a $(\rho,\epsilon)$-Moreau stationary point for any $\epsilon > 0$, with the total number of oracle evaluations in this case being bounded by
\[
O\left(\frac{\Delta_f\bar{\kappa}\bar{\sigma}}{\epsilon^2}\max\left\{\frac{L^2}{(\tilde{\rho}_0)^2}, \left(\frac{\rho}{\tilde{\rho}_0}\right)^2\right\}\log\frac{M(L+\rho_{\max})}{\epsilon}\right).
\]

% \section{Concluding Remarks}
% This paper opens the door for non-asymptotic analysis for PWS optimization, posing a slew of interesting research questions: 
    % what is the tight depedence on the number of pieces. 
    % a better charaterization than PWS, e.g. for stochatic programming and control problems

% For BL method: 
%   stochatic? 
%   exploiting the spectrum

\bibliographystyle{siam}
\bibliography{Reference}

\section{Appendix }

\begin{lemma}\label{lem:approx-smooth-transfer}
Let the function $f: \mathcal{X} \to \mathbb{R}$ be $M$-Lipschitz continuous. Suppose that for two points $\tilde{x}, \tilde{y} \in \mathcal{X}$, the smoothness condition $f(\tilde{x}) -l_f(\tilde{x};\tilde{y})\leq \frac{L}{2}\|\tilde{x} -\tilde{y}\|^2$ holds for some $L \ge 0$. Then for any points $x\in B(\tilde{x};\bar{\delta})$ and $y\in B(\tilde{y};\bar{\delta})$ in the $\bar{\delta}$-neighborhoods of $\tilde{x}$ and $\tilde{y}$ respectively, we have:
\begin{equation*}
f(x)-l_f(x;\tilde{y}) \leq L\|x-y\|^2 + 2M\bar{\delta} + 4L\bar{\delta}^2.
\end{equation*}
\end{lemma}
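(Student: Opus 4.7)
The plan is to bound $f(x)-l_f(x;\tilde{y})$ by first reducing it to the corresponding quantity at the nearby evaluation points $\tilde{x},\tilde{y}$ (where the assumed smoothness bound applies), paying a controlled Lipschitz-type price for the transfer, and then converting the residual distance $\|\tilde{x}-\tilde{y}\|^2$ into $\|x-y\|^2$ plus perturbation error.

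First I would exploit the $M$-Lipschitz assumption in two places. On the function-value side, $f(x)\le f(\tilde{x})+M\|x-\tilde{x}\|\le f(\tilde{x})+M\bar{\delta}$. On the linear-model side, since $M$-Lipschitzness gives $\|f'(\tilde{y})\|\le M$, we have
\[
l_f(x;\tilde{y}) = l_f(\tilde{x};\tilde{y})+\langle f'(\tilde{y}),x-\tilde{x}\rangle\ge l_f(\tilde{x};\tilde{y})-M\bar{\delta}.
\]
Subtracting and invoking the hypothesized smoothness bound at $(\tilde{x},\tilde{y})$ yields
\[
f(x)-l_f(x;\tilde{y})\le f(\tilde{x})-l_f(\tilde{x};\tilde{y})+2M\bar{\delta}\le \tfrac{L}{2}\|\tilde{x}-\tilde{y}\|^{2}+2M\bar{\delta}.
\]

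Next I would convert $\|\tilde{x}-\tilde{y}\|^{2}$ into something involving $\|x-y\|^{2}$. By the triangle inequality
\[
\|\tilde{x}-\tilde{y}\|\le \|\tilde{x}-x\|+\|x-y\|+\|y-\tilde{y}\|\le \|x-y\|+2\bar{\delta},
\]
so squaring and using $(a+b)^{2}\le 2a^{2}+2b^{2}$ gives $\|\tilde{x}-\tilde{y}\|^{2}\le 2\|x-y\|^{2}+8\bar{\delta}^{2}$. Plugging this into the previous inequality produces exactly
\[
f(x)-l_f(x;\tilde{y})\le L\|x-y\|^{2}+4L\bar{\delta}^{2}+2M\bar{\delta},
\]
which is the claim.

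There is no real obstacle here; the proof is two triangle-type inequalities glued to the assumed smoothness bound. The only small care point is remembering to use $\|f'(\tilde{y})\|\le M$ (a consequence of the $M$-Lipschitz hypothesis) to control the linear model under perturbation, and to choose the crude $(a+b)^2\le 2a^2+2b^2$ split so that the constants line up with the $L\|x-y\|^{2}+4L\bar{\delta}^{2}$ form stated in the lemma.
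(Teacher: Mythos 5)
Your proof is correct and follows essentially the same route as the paper: both decompose $f(x)-l_f(x;\tilde{y})$ into the Lipschitz transfer of $f$ from $x$ to $\tilde x$, the assumed smoothness bound at $(\tilde x,\tilde y)$, and the linear-model transfer controlled by $\|f'(\tilde y)\|\le M$, and then convert $\|\tilde x-\tilde y\|^2$ to $2\|x-y\|^2+8\bar\delta^2$ via triangle inequality and the $(a+b)^2\le 2a^2+2b^2$ split. The only difference is presentational (you group the two $M\bar\delta$ terms up front rather than bounding three named summands), so the argument and the constants match the paper exactly.
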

\begin{proof}
We decompose the left-hand side into three parts:
\[ 
f(x)-l_f(x;\tilde{y}) = \underbrace{(f(x)-f(\tilde{x}))}_{\text{Term 1}} + \underbrace{(f(\tilde{x}) - l_f(\tilde{x};\tilde{y}))}_{\text{Term 2}} + \underbrace{(l_f(\tilde{x};\tilde{y}) -l_f(x;\tilde{y}))}_{\text{Term 3}}.
\]
We bound each of these terms separately.
\begin{itemize}
    \item \textit{Term 1:} By the $M$-Lipschitz continuity of $f$ and since $x \in B(\tilde{x}; \bar{\delta})$, we have:
    \[ f(x)-f(\tilde{x})\leq M\|x-\tilde{x}\|\leq M\bar{\delta}. \]

    \item \textit{Term 2:} Using the lemma's assumption and standard norm inequalities, we bound this term:
    \begin{align*}
    f(\tilde{x}) - l_f(\tilde{x};\tilde{y}) 
    &\leq \frac{L}{2}\|\tilde{x} -\tilde{y}\|^2 \\
    &\le \frac{L}{2}\left(2\|x-y\|^2 + 2\|(\tilde{x}-x) + (y-\tilde{y})\|^2 \right) \\
    &\leq L\|x-y\|^2 + L\left(\|\tilde{x}-x\| + \|y-\tilde{y}\|\right)^2 \quad (\text{by triangle inequality}) \\
    &\leq L\|x-y\|^2 + L(2\bar{\delta})^2 = L\|x-y\|^2 + 4L\bar{\delta}^2.
    \end{align*}

    \item \textit{Term 3:} By the definition of $l_f(\cdot;\tilde{y})$ and the fact that $\|f'(\tilde{y})\|_* \le M$:
    \[ 
    l_f(\tilde{x};\tilde{y}) - l_f(x;\tilde{y}) = \langle f'(\tilde{y}), \tilde{x}-x \rangle \leq \|f'(\tilde{y})\|_* \|\tilde{x}-x\| \leq M\bar{\delta}. 
    \]
\end{itemize}
Summing the bounds for the three terms gives the desired result. \qedsymbol
\end{proof}

\begin{proof}[Proof to Lemma \ref{lem:relate-moreau-to-gap}]

Let $\xhat\leftarrow\argmin_{x\in X}F_{2\ups}(x;\xbar)$. It follows
from Lemma 2.2 in \cite{davis2019stochastic} that $\grad f_{2\ups}(\xbar)=2\ups(\xhat-\bar{x})$.
Since $F_{2\ups}$ is strongly convex with modulus $\ups$, we have
\begin{align*}
\normsq{\grad f_{2\ups}(\xbar)} & =(8\ups)\frac{\ups}{2}\normsq{\xbar-\xhat}\\
 & \leq8\ups[F_{2\ups}(\xbar;\xbar)-\Fups(\xhat;\xbar)]\\
 & =8\ups[f(\xbar)-\fups(\xbar)].
\end{align*}
\qedsymbol
\end{proof}

The following lemma provides a lower bound on the optimal objective value $f^*$ and a computable upper bound on the QG modulus $\mu$.

\begin{lemma}
\label{lem:initial_lower-bound}
Let $f$ be a convex objective function. The following statements are valid.
\begin{enumerate}
    \item[a)] If $f$ satisfies the QG condition with modulus $\mu>0$, then for any point $x^0 \in X$, we have the lower bound:
    \[ 
    f(x^0) - \frac{2}{\mu}\|f'(x^0)\|^2 \le f^*. 
    \]
    \item[b)] For any two points $x,y \in X$ with $f(x)>f(y)$, an upper bound on a valid QG modulus $\mu$ for the point $x$ is given by:
    \[ 
    \mu \le \frac{2\|f'(x)\|^2}{f(x)-f(y)}.
    \]
    \item[c)] If $f$ is $\bar{\mu}$-strongly convex, then for any point $x^0 \in \mathcal{X}$, we have the tighter lower bound:
    \[
    f(x^0) - \frac{1}{2\bar{\mu}}\|f'(x^0)\|^2 \le f^*.
    \]
\end{enumerate}
\end{lemma}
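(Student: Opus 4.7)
\textbf{Proof proposal for Lemma~\ref{lem:initial_lower-bound}.}

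The plan is to prove part (a) first via a standard convexity-plus-QG argument, then derive part (b) as an immediate corollary, and finally handle part (c) via the classical strong-convexity lower bound.

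For part (a), I would let $x^{*} := \mathrm{proj}_{X^{*}}(x^{0})$ so that $\|x^{0}-x^{*}\| = \mathrm{dist}(x^{0},X^{*})$. Convexity of $f$ and the Cauchy--Schwarz inequality yield
\[
f(x^{0})-f^{*} \;\le\; \langle f'(x^{0}),\, x^{0}-x^{*}\rangle \;\le\; \|f'(x^{0})\|\cdot\mathrm{dist}(x^{0},X^{*}).
\]
Then the QG condition rearranges into $\mathrm{dist}(x^{0},X^{*}) \le \sqrt{2(f(x^{0})-f^{*})/\mu}$. Substituting this in and squaring both sides gives $(f(x^{0})-f^{*})^{2} \le (2/\mu)\|f'(x^{0})\|^{2}(f(x^{0})-f^{*})$; dividing by $f(x^{0})-f^{*}$ (assuming this is positive, else the bound is trivial) produces $f(x^{0})-f^{*} \le (2/\mu)\|f'(x^{0})\|^{2}$, which is the claimed inequality after rearrangement.

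Part (b) follows directly by rearranging the inequality proved in part (a): from $f(x)-f^{*} \le (2/\mu)\|f'(x)\|^{2}$ and $f(y)\ge f^{*}$, we get $\mu\,(f(x)-f(y)) \le \mu\,(f(x)-f^{*}) \le 2\|f'(x)\|^{2}$, and since $f(x)>f(y)$ by assumption, dividing through yields the stated upper bound on $\mu$.

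For part (c), I would invoke the $\bar{\mu}$-strong convexity of $f$ at $x^{0}$, which gives the quadratic lower model
\[
f(y) \;\ge\; f(x^{0}) + \langle f'(x^{0}),\, y-x^{0}\rangle + \tfrac{\bar{\mu}}{2}\|y-x^{0}\|^{2}\qquad\forall y\in\mathbb{R}^{n}.
\]
Taking the infimum of both sides over $y\in X$ yields $f^{*} \ge \inf_{y\in X}[\cdots] \ge \inf_{y\in\mathbb{R}^{n}}[\cdots]$, where the final unconstrained quadratic minimization is attained at $y = x^{0} - f'(x^{0})/\bar{\mu}$ and evaluates to $f(x^{0}) - \tfrac{1}{2\bar{\mu}}\|f'(x^{0})\|^{2}$, giving the claim. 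None of the three parts presents a real obstacle; the only mildly subtle step is recognizing that squaring (in part (a)) legitimately handles the square-root transformation coming from QG, and that in part (c) enlarging the feasible region from $X$ to $\mathbb{R}^{n}$ only weakens the lower bound so the inequality chain is preserved.
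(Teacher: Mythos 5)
Your proof is correct and follows essentially the same route as the paper's: part (a) via projection onto $X^*$, convexity, Cauchy--Schwarz, and QG (with only a cosmetic algebraic difference---you square and divide, while the paper solves for $\mathrm{dist}(x^0,X^*)$ first and substitutes back); part (b) as an immediate corollary of the PL-type inequality from (a) together with $f(y)\ge f^*$; and part (c) by minimizing the strong-convexity quadratic lower model. No gaps.
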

\begin{proof}
\textit{Part a)}: Let $x_p^*$ be the projection of $x^0$ onto the optimal set $X^*$. By convexity and the QG property, we have:
\[
\frac{\mu}{2}\|x^0-x_p^*\|^2 \le f(x^0)-f^* = f(x^0)-f(x_p^*) \le \langle f'(x^0), x^0-x_p^* \rangle \le \|f'(x^0)\|\|x^0-x_p^*\|.
\]
Rearranging the first and last terms gives $\|x^0-x_p^*\| \le \frac{2\|f'(x^0)\|}{\mu}$. Substituting this back into the inequality $f(x^0)-f^* \le \|f'(x^0)\|\|x^0-x_p^*\|$ yields the result.

\textit{Part b)}: The result follows directly from the Polyak-Lojasiewicz (PL) type inequality derived in the Part a), $f(x)-f^* \le \frac{2\|f'(x)\|^2}{\mu}$, combined with the fact that $f^* \le f(y)$.

\textit{Part c)}: The definition of $\bar{\mu}$-strong convexity provides a quadratic lower bound on $f$ around $x^0$. For any optimal solution $x^*$:
\[
f^* = f(x^*) \ge f(x^0) + \langle f'(x^0), x^*-x^0 \rangle + \frac{\bar{\mu}}{2}\|x^*-x^0\|^2.
\]
The right-hand side is a quadratic in $(x^*-x^0)$, which is minimized at $x^*-x^0 = -f'(x^0)/\bar{\mu}$. The minimum value is $f(x^0) - \frac{1}{2\bar{\mu}}\|f'(x^0)\|^2$, which gives the desired lower bound on $f^*$.
\end{proof}

\end{document}